% !TeX encoding = UTF-8
% !TeX spellcheck = en_GB
\documentclass[british]{amsart}
\usepackage[T1]{fontenc}
\usepackage[utf8]{inputenc}
\usepackage[dvipsnames]{xcolor}
\usepackage[numbers]{natbib}
\usepackage{amstext,amsthm,amssymb}
\usepackage{xparse,mathrsfs,mathtools}
\usepackage{csquotes}
\usepackage{bookmark}
\usepackage{booktabs}
\usepackage[capitalise,nameinlink]{cleveref}
\usepackage{babel}
\usepackage[yyyymmdd,hhmmss]{datetime}
\usepackage{hyperref}
\usepackage{lmodern,microtype}
%\usepackage[paperwidth=\textwidth+0.5cm, paperheight=\textheight+2cm, text={\textwidth,\textheight}]{geometry}

%\usepackage{refcheck}
%\makeatletter
%\newcommand{\refcheckize}[1]{%
%	\expandafter\let\csname @@\string#1\endcsname#1%
%	\expandafter\DeclareRobustCommand\csname relax\string#1\endcsname[1]{%
%		\csname @@\string#1\endcsname{##1}\wrtusdrf{##1}}%
%	\expandafter\let\expandafter#1\csname relax\string#1\endcsname
%}
%\makeatother
%\refcheckize{\cref}
%\refcheckize{\Cref}

\hypersetup{
	unicode=true,%
	pdfdisplaydoctitle=true,%
	pdfpagemode=UseOutlines,%
	breaklinks=true,%
	pdfencoding=unicode,psdextra,
	pdfcreator={},
	pdfborder={0 0 0},
	hidelinks
}

% -------------------------------------------------------
%\raggedbottom
\newtheorem{thm}{Theorem}[section]
\newtheorem{prop}[thm]{Proposition}
\newtheorem{lem}[thm]{Lemma}
\newtheorem{cor}[thm]{Corollary}
\newtheorem*{thm*}{Theorem}
\newtheorem*{lem*}{Lemma}
\theoremstyle{definition}
\newtheorem{rem}[thm]{Remark}
\newtheorem*{rem*}{Remark}

\DeclarePairedDelimiter\parentheses{\lparen}{\rparen}
\DeclarePairedDelimiter\floor{\lfloor}{\rfloor}
\DeclarePairedDelimiter\ceil{\lceil}{\rceil}
\DeclarePairedDelimiter\braces{\lbrace}{\rbrace}
\DeclarePairedDelimiter\abs{\lvert}{\rvert}
\DeclarePairedDelimiter\norm{\lVert}{\rVert}
\DeclarePairedDelimiter\ropeninterval{\lbrack}{\rparen}
\DeclarePairedDelimiter\brackets{\lbrack}{\rbrack}
\DeclarePairedDelimiter{\inner}{\langle}{\rangle}

\DeclareMathOperator{\area}{area}

\DeclareMathOperator{\diam}{diam}

\newcommand{\Norm}{\mathrm{N}}
\newcommand{\numberfieldK}{\mathbb{K}}

\newcommand{\integers}{\mathcal{O}}
\newcommand{\NN}{\mathbb{N}}
\newcommand{\ZZ}{\mathbb{Z}}
\newcommand{\QQ}{\mathbb{Q}}
\newcommand{\RR}{\mathbb{R}}
\newcommand{\CC}{\mathbb{C}}
\newcommand{\Primes}{\mathbb{P}}
\newcommand{\ReO}[1]{\Re_{#1}}
\newcommand{\ImO}[1]{\Im_{#1}}
\newcommand{\cupdot}{\mathbin{\mathaccent\cdot\cup}}
\newcommand{\dd}[1]{\mathop{\mathrm{d}#1}}
\newcommand{\ConditionalOne}[1]{{\boldsymbol{1}}_{\braces{#1}}}

\newcommand{\mrestrictedsum}[1]{\mathop{\sum\nolimits^{*}\!\sum}_{#1}}
\newcommand{\MNsum}{\mathop{\sum\!\sum}_{m,\,n}}
\newcommand{\JJsum}{\mathop{\sum\!\sum}_{\substack{ 1\leq \abs{j_1} < J \\ 1 \leq \abs{j_2}< J }}}
\newcommand{\bilinError}{E}
\DeclareMathOperator{\eOpname}{e}
\NewDocumentCommand\e{ s O{} m }{
	\IfBooleanTF{#1}{%
		\eOpname_{#2}\parentheses[\big]{#3}%
	}{\eOpname_{#2}\parentheses{#3}}%
}
\newcommand{\newUpperBound}{U}

\numberwithin{equation}{section}
\multlinegap=0pt
\crefformat{equation}{#2(#1)#3}
\crefformat{enumi}{#2(#1)#3}

% -------------------------------------------------------
%\usepackage{subfiles}
\usepackage[dvipsnames]{xcolor}
\usepackage{tikz}

\usetikzlibrary{calc}
\usetikzlibrary{arrows}
\usetikzlibrary{backgrounds}
%\usetikzlibrary{babel}
\usetikzlibrary{positioning}
\usetikzlibrary{decorations.pathreplacing}

\definecolor{othercolor}{gray}{0.80}%{rgb}{1,0.5,0}

\definecolor{othercolorTwo}{rgb}{1,1,1}%{RGB}{0,113,187}

% -------------------------------------------------------
%\usepackage{xifthen,ifthen}
%\makeatletter
%\newcounter{@ToDo}
%\newcommand{\todo@helper}[1]{%
%	({\color{blue}TODO~\arabic{@ToDo}: {#1\@addpunct{.}}})%
%}
%\newcommand{\todo}[1]{\stepcounter{@ToDo}%
%	\relax\ifmmode\text{\todo@helper{#1}}%
%	\else\todo@helper{#1}\fi%
%}
%\makeatother
%\overfullrule=5pt

% -------------------------------------------------------
\title[On the distribution of $\alpha p$ mod.\ one in imaginary quadratic number fields]{On the distribution of 
$\alpha p$ modulo one in imaginary quadratic number fields with class number one}
%\date{2019/05/18} % \today,\ \currenttime

\subjclass[2010]{%
	Primary
	11J17; % Approximation by numbers from a fixed field
	Secondary
	11L07, % Estimates on exponential sums
	11L20, % Sums over primes
	11K60.%% Diophantine approximation (probabilistic number theory)
}
\keywords{Distribution modulo one, Diophantine approximation, imaginary quadratic field, smoothed sum, Poisson summation}

\author{Stephan~Baier}
\address{Stephan~Baier\\%
	Ramakrishna Mission Vivekananda Educational Research Institute\\%
	Department of Mathematics\\%
	G.\ T.\ Road, PO~Belur Math, Howrah, West Bengal~711202\\%
	India}
\email{email\_baier@yahoo.de}
\urladdr{https://www.researchgate.net/profile/Stephan\_Baier2}

\author{Marc~Technau}
\address{Marc~Technau\\%
	Graz University of Technology\\%
	Institute of Analysis and Number Theory\\%
	Kopernikusgasse~24/II\\%
	8010~Graz\\%
	Austria}
\email{mtechnau@math.tugraz.at}
\urladdr{https://www.math.tugraz.at/\string~mtechnau/}

\begin{document}
\maketitle

%\begin{resume}
%	Nous \'etudions la r\'epartition de $\alpha p$ modulo un dans les corps quadratiques imaginaires $\numberfieldK\subset\CC$ dont le nombre de classes est \'egal \`a un, o\`u $p$ parcourt l'ensemble des idéaux premiers de l'anneau des entiers    $\integers = \ZZ[\omega]$ de $\numberfieldK$.
%	Par analogie avec un r\'esultat classique d\^u \`a R.\ C.\ Vaughan,  nous obtenons que l'in\'egalit\'e $\norm{\alpha p}_\omega < \Norm(p)^{-1/8+\epsilon}$ est satisfaite pour une infinit\'e de $p$, o\`u $\norm{\varrho}_\omega$ mesure la distance de $\varrho\in\CC$ \`a $\integers$ et $\Norm(p)$ est la norme de $p$.
	
%	La preuve est bas\'ee sur la m\'ethode du crible de Harman et utilise des analogues pour les corps de nombres d'id\'ees classiques dues \`a Vinogradov. 
%	De plus, nous introduisons un lissage qui nous permet d'utiliser la formule sommatoire de Poisson.
%\end{resume}
%\begin{abstr}

\begin{abstract}
	We investigate the distribution of $\alpha p$ modulo one in imaginary quadratic number fields $\numberfieldK\subset\CC$ with class number one, 
	where $p$ is restricted to prime elements in the ring of integers $\integers = \ZZ[\omega]$ of $\numberfieldK$. In analogy to 
	classical work due to R.\ C.\ Vaughan, 
	we obtain that the inequality $\norm{\alpha p}_\omega < \Norm(p)^{-1/8+\epsilon}$ is satisfied for infinitely many $p$, 
	where $\norm{\varrho}_\omega$ measures the distance of $\varrho\in\CC$ to $\integers$ and $\Norm(p)$ denotes the norm of $p$.
	
	The proof is based on Harman's sieve method and employs number field analogues of classical ideas due to Vinogradov. 
	Moreover, we introduce a smoothing which allows us to make conveniently use of the Poisson summation formula.
\end{abstract}
%\end{abstr}

\bigskip

% ~~~~~~~~~~~~~~~~~~~~~~~~~~~~~~~~~~~~~~~~~~~~~~~~~~~~~~~
\section{Introduction}
Dirichlet's classical approximation theorem asserts that, given some real irrational $\alpha$, there are infinitely many rational integers $a,q$ ($q\neq 0$) with
\[
	\abs*{ \alpha - a/q } < q^{-2},
\]
or---equivalently---on writing $\norm{\rho} = \min_{x\in\ZZ} \abs{ \rho - x }$ for the distance to a nearest integer,
\begin{equation}\label{eq:DirichletApprox}
	\norm{ q\alpha } < q^{-1}
	\quad\text{for infinitely many } q.
\end{equation}
Albeit individual values of $\alpha$ may allow for significantly sharper approximation by rational numbers, Hurwitz's approximation theorem implies that the exponent~$-1$ in~\cref{eq:DirichletApprox} is optimal in the sense that it cannot be decreased without the resulting new inequality failing to admit infinitely many solutions for some real irrational $\alpha$ (see, e.g., \cite[Theorems~193 and~194]{hardy2008anintroductionto}).

A natural variation on the question about the solubility of~\cref{eq:DirichletApprox} is to impose the additional restriction that $q$ be a rational prime and ask for which exponent $\theta$ one is able to establish that, for any real irrational $\alpha$,
\begin{equation}\label{eq:DirichletApprox:PrimeConstraint}
	\norm{ p\alpha } < p^{-\theta} \quad
	\text{for infinitely many rational primes }p.
\end{equation}
In this direction I.\ M.\ Vinogradov~\cite{vinogradov2004themethod} obtained~\cref{eq:DirichletApprox:PrimeConstraint} with $\theta = \frac{1}{5}-\epsilon$, a result which has since then been improved by a number of researchers (see \cref{tab:PrimeApprox:ExponentImprovements}) culminating in the work of \citet{matomaki2009the-distribution} who obtained $\theta=1/3-\epsilon$. This exponent is considered to be the limit of the current technology (see the comments in~\cite{heath-brown2002the-distribution}).
\begin{table}[!ht]
	\centering%
	\begingroup%
	\newcommand{\historyLine}[3]{ \citeyear{#1} & \citet{#1} & {#2} & {#3} \\ }
	\begin{tabular}{llll}
		\toprule
		Date & Author(s) & $\theta$ \\
		\cmidrule(r){1-1} \cmidrule(lr){2-2} \cmidrule(l){3-4}
		\historyLine{vaughan1978onthedistribution}{$1/4-\epsilon$}{$=0.25-\epsilon$}
		\historyLine{harman1983on-the-distribution}{$3/10$}{$=0.3$}
		\historyLine{jia1993on-the-distribution}{$4/13$}{$=0.3076\!\ldots$}
		\historyLine{harman1996on-the-distribu}{$7/22$}{$=0.31\overline{81}$}
		\historyLine{jia2000on-the-distribution}{$9/28$}{$=0.3214\!\ldots$}
		\historyLine{heath-brown2002the-distribution}{$16/49$}{$=0.3265\!\ldots$}
		\historyLine{matomaki2009the-distribution}{$1/3-\epsilon$}{$=0.3\overline{3}-\epsilon$}[0.5ex]
		\bottomrule
	\end{tabular}\vspace{1ex}
	\endgroup%
	\caption{Improvements on the admissible exponent $\theta$ in~\cref{eq:DirichletApprox:PrimeConstraint}.}%
	\label{tab:PrimeApprox:ExponentImprovements}%
\end{table}

In view of the above, the first named author~\cite{baier2016a-note-on} proposed to study the analogue of~\cref{eq:DirichletApprox:PrimeConstraint} for the Gaussian integers. The approach in~\cite{baier2016a-note-on} rests upon Harman's sieve method~\cite{harman1983on-the-distribution,harman1996on-the-distribu,harman2007primedetectingsieves} and the required \enquote{arithmetical input} is obtained using novel Gaussian integer analogues of classical ideas due to Vinogradov \cite[Lemma~8a]{vinogradov2004themethod}.

In this paper, we consider the more general problem of proving analogues of \cref{eq:DirichletApprox:PrimeConstraint} for imaginary quadratic number fields.
It turns out that, in our opinion, this setting also has the pleasant side effect of painting a clearer picture of the Diophantine arguments that underpin the aforementioned arithmetical information.
Preliminary results in this direction were obtained in the second author's doctoral dissertation~\cite{technau2018on-beatty}. A novel aspect of the present work is our additional use of smoothing directly incorporated into Harman's sieve method.

% --------------------------------------------------------
\section{Main results}
Before stating our results, we shall introduce some notation which is used throughout the rest of the article. We fix some imaginary quadratic number field $\numberfieldK$ with distinguished embedding into the complex numbers $\CC$ by means of which we shall regard $\numberfieldK$ as a subfield of $\CC$. By $\integers$ we denote the \emph{ring of integers} of $\numberfieldK$, i.e., the integral closure of $\ZZ$ in $\numberfieldK$. As $\numberfieldK$ is a quadratic extension of $\QQ$, it follows from well-known results from elementary algebraic number theory that $\integers$ is a free $\ZZ$-module of rank~$2$ and there is some $\omega\in\integers$ such that $\braces{1,\omega}$ is a $\ZZ$-basis of $\integers$. Since, by assumption, $\numberfieldK\nsubseteq\RR$, and $\numberfieldK$ being the field of fractions of $\integers$, it follows that $\Im\omega\neq 0$.
In particular, $\braces{1,\omega}$ turns out to be an $\RR$-basis of $\CC$ and, given some $\varrho\in\CC$, we write $\ReO{\omega}\varrho$ and $\ImO{\omega}\varrho$ for the unique real numbers satisfying
\[
	\varrho = \ReO{\omega}\varrho + \parentheses{\ImO{\omega}\varrho}\omega.
\]
With this notation, we put
\[
	\norm{\varrho}_\omega
	= \max\braces{ \norm{\ReO{\omega}\varrho}, \norm{\ImO{\omega}\varrho} }.
\]
The natural notion of \enquote{size} of an element $m\in\integers$ is furnished by its \emph{norm} $\Norm(m)$, that is, the number of elements in the factor ring $\integers/m\integers$. It can be shown that $\Norm(m) = \abs{m}^2$, where $\abs{m}$ is the usual absolute value of $m$ considered as a complex number.

The question we ask may now be enunciated as follows:
%(compare also \cref{fig:PointDistribution})
\begin{quote}\itshape
	Given some imaginary quadratic number field $\numberfieldK\subset\CC$ with ring of integers $\integers$, a choice $\braces{1,\omega}$ of $\ZZ$-basis of $\integers$, and given some $\alpha\in\CC\setminus\numberfieldK$, for which $\theta>0$, does one have
	\[
		\norm{p\alpha}_\omega < \Norm(p)^{-\theta}
	\]
	for infinitely many irreducible (or prime) elements $p\in\integers$?
\end{quote}

%\subfile{numberfieldapprox-fig-point-distribution.tex}
%\figurePointDistribution

As unique factorisation underpins the sieve method we employ to tackle the above question, we are forced to restrict our considerations to only those $\numberfieldK$ with class number~$1$ (which, in this setting, is equivalent to $\integers$ being a unique factorisation domain).
The full determination of all such $\numberfieldK$ is provided by the celebrated \emph{Baker--Heegner--Stark theorem}~\cite{heegner1952diophantische-analysis,baker1966linear-forms,stark1967a-complete-determination,stark1969on-the-gap}:
\begin{thm*}[\citeauthor{baker1966linear-forms}--\citeauthor{heegner1952diophantische-analysis}--\citeauthor{stark1969on-the-gap}]
	The imaginary quadratic number fields $\numberfieldK$ with class number~$1$ are (up to isomorphism) precisely those $\QQ(\sqrt{d})$ with $-d$ from the finite list $1$, $2$, $3$, $7$, $11$, $19$, $43$, $67$, $163$.
%	\[
%		d = {-1},\, {-2},\, {-3},\, {-7},\, {-11},\, {-19},\, {-43},\, {-67},\, {-163}.
%	\]
\end{thm*}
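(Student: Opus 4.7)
The classification is a landmark result whose proof---in any of its forms---lies far outside the sieve-theoretic circle of ideas developed in the rest of this paper; below I merely sketch the Baker-style strategy one might adopt.

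The plan is to reduce the statement to showing that the discriminant of $\numberfieldK=\QQ(\sqrt{-d})$ is \emph{effectively} bounded whenever the class number $h(-d)$ equals one, after which a finite check enumerates the nine listed cases. The starting point is the analytic class number formula, which in the imaginary quadratic setting reads
\[
    h(-d) = \frac{w\sqrt{d}}{2\pi}\, L(1,\chi_{-d}),
\]
where $w\in\{2,4,6\}$ counts the roots of unity in $\integers$ and $\chi_{-d}$ is the Kronecker character attached to the discriminant. The hypothesis $h(-d)=1$ thus forces $L(1,\chi_{-d})$ to be of size $\asymp d^{-1/2}$, which is extraordinarily small compared with the typical value $\asymp 1$ and with Siegel's (ineffective) lower bound.

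The decisive---and hard---step is to convert this analytic smallness into an arithmetical contradiction for all sufficiently large $d$. Following Baker, one would express a suitable truncation of $L(1,\chi_{-d})$, weighted against character values, as a linear form
\[
    \Lambda = \sum_{j} b_j \log \alpha_j
\]
in logarithms of a \emph{fixed} finite collection of algebraic numbers $\alpha_j$ (roots of unity and units in suitable cyclotomic fields), with integer coefficients $b_j$ of polynomially controlled size in $d$. The near-vanishing of $L(1,\chi_{-d})$ then translates into $|\Lambda|\ll \e{-c\sqrt{d}}$, whereas Baker's theorem on linear forms in logarithms supplies the effective lower bound $|\Lambda|\geq (\max_j |b_j|)^{-C}$ for an absolute constant $C>0$. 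Comparing the two estimates forces $d$ to be bounded by an absolute and effectively computable constant.

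The principal obstacle, unavoidable along any route, is precisely the transcendence input: one must either invoke Baker's theorem on linear forms in logarithms, or else replace it with the original Heegner--Stark argument, which extracts the contradiction from the theory of modular functions by observing that $h(-d)=1$ forces $j(\omega)$ to be a rational integer and then exploiting modular equations linking $j(\tau)$ with $j(2\tau)$ (and Weber's auxiliary functions) to obtain a Diophantine equation with explicitly listable solutions. Since neither route is germane to the sieve-theoretic work that follows, I would treat the theorem purely as a black box.
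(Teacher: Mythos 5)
The paper does not prove this theorem at all; it invokes it as a black box, citing Heegner, Baker, and Stark, precisely because the classification is foundational input rather than something derivable from the sieve-theoretic machinery developed in the article. Your proposal correctly recognises this and likewise treats the result as imported, so in that respect you and the paper are in agreement.

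Your sketch of the two known proof strategies is broadly sound in outline, but one detail of the Baker route is off. Baker's argument does not proceed by expressing a truncation of $L(1,\chi_{-d})$ itself as a linear form in logarithms. Rather, following the Gelfond--Linnik reduction, one considers a product such as $L(s,\chi_{-d})\,L(s,\chi_{k}\chi_{-d})$ for a fixed auxiliary real character $\chi_k$, applies the Kronecker limit formula (equivalently, evaluates an Epstein zeta function attached to the relevant binary quadratic forms), and it is the \emph{constant term} in that limit formula---built from Dedekind eta values, hence from logarithms of a fixed set of algebraic numbers---that yields the linear form $\Lambda$ with coefficients of size polynomial in $d$. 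The class number formula and the trivial bound $h(-d)\geq 1$ for the auxiliary field then force $\abs{\Lambda}$ to be exponentially small in $\sqrt{d}$, and Baker's lower bound on $\abs{\Lambda}$ closes the gap. Your description of the Heegner--Stark route (integrality of $j(\omega)$ plus modular equations and Weber functions) is accurate. Since the paper itself offers no proof, there is nothing further to compare; just be aware that the ``truncate $L(1,\chi)$'' phrasing would not survive a closer look.
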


Our main result states that in the above question any $\theta<1/8$ is admissible, 
provided that $\numberfieldK$ has class number~$1$. This is the precise analogue of Vaughan's exponent $1/4-\epsilon$ for the classical case,
obtained in \cite{vaughan1978onthedistribution}.
Note that in the class number~$1$ setting, the notions of \emph{prime} and \emph{irreducible} coincide.
\begin{thm}
	\label{thm:ApproxWithPrimes}
	Let $\numberfieldK\subset\CC$ be an imaginary quadratic number field with class number~$1$ and let $\integers$ be its ring of integers with $\ZZ$-basis $\braces{1,\omega}$.
	Suppose that $\alpha$ is a complex number such that $\alpha\notin\numberfieldK$.
	Then, for any $\epsilon>0$, there exists an infinite sequence of distinct prime elements $p\in\integers$ such that
	\begin{equation}\label{eq:-1/16:result}
		\norm{p\alpha}_\omega \leq \Norm(p)^{-1/8+\epsilon}.
	\end{equation}
\end{thm}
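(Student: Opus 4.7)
My plan is to obtain, for every sufficiently large $X$, a nontrivial lower bound on a smoothly weighted count of prime elements $p\in\integers$ with $\Norm(p)\asymp X$ and $\norm{p\alpha}_\omega<\delta$, where $\delta=X^{-1/8+\epsilon}$; letting $X\to\infty$ then yields the required infinite sequence of $p$ verifying \cref{eq:-1/16:result}. To prepare the ground for Fourier-analytic manipulations I would smooth on both sides: introduce a smooth bump $\Psi\colon\CC\to[0,1]$ localising to $\Norm(\cdot)\asymp X$, and replace the sharp indicator $\ConditionalOne{\norm{\cdot}_\omega<\delta}$ by an $\integers$-periodic smooth function $\psi_\delta$ of mass $\asymp\delta^2$ concentrated near the lattice. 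Because $\Im\omega\neq 0$, the lattice $\integers$ is a genuine rank-two lattice in $\CC$, so the Poisson summation formula expresses $\sum_n \Psi(n)\psi_\delta(n\beta)$ as an explicit main term of order $X\delta^2$ plus a remainder supported on dual-lattice frequencies inside a box of radius $\asymp\delta^{-1}$.

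Next, I would invoke Harman's sieve method in the form of~\cite{harman2007primedetectingsieves}; the class-number-one hypothesis guarantees unique factorisation in $\integers$, so the usual sieve axioms apply. Iterated Buchstab decompositions reduce the problem to estimating type~I sums $\sum_m a_m \sum_n \Psi(mn)\psi_\delta(mn\alpha)$ and type~II sums $\sum_m a_m\sum_n b_n \Psi(mn)\psi_\delta(mn\alpha)$ over various split ranges with $\Norm(mn)\asymp X$. Type~I sums I would handle by direct Poisson summation in the inner variable, isolating the expected main term and bounding non-zero Fourier modes using the Diophantine behaviour of $\alpha$ ensured by $\alpha\notin\numberfieldK$. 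Type~II sums I would handle by Cauchy--Schwarz in $m$, squaring out in $n$, Poisson summation in $n$, and reduction to off-diagonal double sums of approximate form
\[
	\JJsum \min\braces*{N,\,\norm{(j_1-j_2)\alpha}_\omega^{-2}}
\]
with $J\asymp\delta^{-1}$ and $N$ the length of the inner sum; these are controlled by the number field analogue of Vinogradov's Lemma~8a developed in~\cite{baier2016a-note-on}.

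The main technical obstacle will be securing the type~II bound uniformly across the full range of ratios $\Norm(m)/\Norm(n)$ demanded by Harman's alternation, so that the bilinear error is genuinely $o(X\delta^2)$: it is the three-way balance between the length of the inner sum, the dual-lattice frequency range $\delta^{-1}$, and the Diophantine losses for $\alpha$ modulo $\integers$ that pins the exponent at $1/8$, i.e.\ the precise two-dimensional analogue of Vaughan's $1/4$. A secondary but nontrivial concern is extending the small-denominator Diophantine machinery of~\cite{baier2016a-note-on} from $\ZZ[i]$ to all nine class-number-one rings $\ZZ[\omega]$ uniformly, which amounts to replacing the unit square of $\ZZ[i]$ by the fundamental parallelogram of $\integers$ and tracking the resulting implied constants; the smoothing introduced through $\psi_\delta$ should help keep this book-keeping clean. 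Once the type~II estimate is established, a standard run through Harman's sieve delivers the desired positive lower bound and hence the theorem.
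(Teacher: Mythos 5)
Your proposal follows essentially the same route as the paper: Gaussian-type smoothing on both the size of $p$ and the $\norm{p\alpha}_\omega<\delta$ condition, Poisson summation to expose a main term of order $\delta^2 X$ with dual-lattice frequencies truncated at $\asymp\delta^{-1}$, Harman's sieve with Buchstab iteration reducing to Type~I/II bilinear sums, Cauchy--Schwarz and Poisson in the inner variable for Type~II, and the Diophantine spacing lemma for $\integers$ (the number field analogue of Vinogradov's Lemma~8a) controlling the resulting counts. The only cosmetic deviation is your annular smooth bump $\Psi$ where the paper uses the self-dual Gaussian $e^{-\pi\abs{z}^2/N}$ (which simplifies Poisson summation at the minor cost of having to localise $\Norm(p)$ afterwards via the detector corollary); this changes no substantive step of the argument.
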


Our approach to proving \cref{thm:ApproxWithPrimes} involves counting prime elements with a certain smooth weight attached to them. On the other hand, one can also use sharp cut-offs and obtain a less fuzzy quantitative result at the cost of having to restrict to smaller values of $\theta$ in the above question. We prove the following:
\begin{thm}
	\label{thm:DiophApprox:Integers}
	Let $\numberfieldK\subset\CC$ be an imaginary quadratic number field with class number~$1$ and let $\integers$ be its ring of integers with $\ZZ$-basis $\braces{1,\omega}$.
	Suppose that $\epsilon>0$ is sufficiently small.
	Let $\alpha$ be a complex number not contained in $\numberfieldK$.
	Furthermore, suppose that one has coprime $a,q\in\integers$ such that
	\begin{equation}\label{eq:ThetaApprox}
		q\neq 0,\quad
		\frac{a}{q}\notin\integers,
		\quad\text{and}\quad
		\gamma = \alpha - \frac{a}{q}
		\quad\text{satisfies}\quad
		\abs{\gamma} \leq \frac{C}{\Norm(q)}
	\end{equation}
	for some constant $C>0$ and put $x = \Norm(q)^{28/5}$. Then, for any $\delta$ such that
	\begin{equation}\label{eq:DeltaRange}
		x^{-1/28+\epsilon} \leq \delta < \tfrac{1}{2},
	\end{equation}
	we have
	\[
		\abs[\bigg]{ \sum_{\substack{ x/2\leq \Norm(p) < x \\ \norm{p\alpha}_\omega < \delta }} 1
		- 4\delta^2 \sum_{x/2\leq \Norm(p) < x} 1 }
		\ll_\epsilon C^2 \Norm(\omega)^7 \delta^2 x^{1-\epsilon},
	\]
	where the summation variable $p$ (as throughout) only assumes prime elements of $\integers$ and the implied constant depends on $\epsilon$ alone.
\end{thm}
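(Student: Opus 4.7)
The plan is to transform the sharp condition $\norm{p\alpha}_\omega < \delta$ into an exponential sum via Fourier expansion on the fundamental domain of $\integers$ in $\CC$. Writing $\rho = \ReO{\omega}\rho + (\ImO{\omega}\rho)\omega$ and expanding $\ConditionalOne{\norm{\rho}_\omega < \delta}$ as a double Fourier series in the coordinates $(\ReO{\omega}\rho, \ImO{\omega}\rho)$ produces coefficients of order $\min(\delta, \abs{m}^{-1})\min(\delta, \abs{n}^{-1})$. Since this series converges only conditionally at the boundary of the box, I would sandwich the indicator between Beurling--Selberg/Vaaler-type majorants and minorants $F^{\pm}$ whose Fourier coefficients vanish for $\max(\abs{m}, \abs{n}) > H$, with $H$ a parameter to be optimised. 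The zero-frequency term furnishes the main contribution $4\delta^{2}\sum_{p} 1$, and the problem is reduced to bounding, for each nonzero $(m,n) \in \ZZ^{2}$ with $\abs{m}, \abs{n} \leq H$, the prime exponential sum
\[
	S(m,n) = \sum_{x/2 \leq \Norm(p) < x} \e{m\, \ReO{\omega}(p\alpha) + n\, \ImO{\omega}(p\alpha)}
\]
and summing the results against the Fourier weights.

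To estimate $S(m,n)$, I would invoke Harman's sieve method in the setting of $\integers$, as already employed in the authors' previous work. The sieve expresses $\sum_{p}$ as a linear combination of Type~I bilinear sums $\sum_{k} a_{k} \sum_{l} \Phi(kl)$ with $\Norm(k) \leq M_{0}$, and Type~II bilinear sums $\sum_{k, l} a_{k} b_{l} \Phi(kl)$ with $M_{1} \leq \Norm(k) \leq M_{2}$ and arbitrary bounded coefficients, where $\Phi$ is the phase attached to $(m,n)$. For the Type~I contribution, the inner sum over $l \in \integers$ in an appropriate box is executed via Poisson summation on the lattice $\integers$, isolating a main term governed by an additive character modulo $q$ and bounding the error through $\abs{\gamma} \leq C/\Norm(q)$ together with standard Gauss-sum estimates. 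For the Type~II contribution, Cauchy--Schwarz on the outer variable followed by expansion of the square reduces matters to inner sums of the shape $\sum_{k} \e{k(l_{1}-l_{2})\beta}$ for various $\beta$ close to a rational with denominator $q$; these are controlled by the imaginary-quadratic analogue of Vinogradov's Lemma~8a developed in the authors' predecessor paper.

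The main obstacle will be carrying out the Type~II estimate with enough uniformity in $(m,n)$ that summing against the Fourier weights over $\abs{m}, \abs{n} \leq H$ preserves the saving. Careful tracking of all dependencies on $C$ and $\Norm(\omega)$ through both the Poisson step and the Cauchy--Schwarz step will eventually produce the stated prefactor $C^{2} \Norm(\omega)^{7}$. Optimising the Type~I range $M_{0}$, the Type~II window $(M_{1}, M_{2})$, the Fourier cut-off $H$ (which should be taken $\asymp \delta^{-1} x^{\epsilon}$), and the Diophantine quality $\Norm(q) = x^{5/28}$ yields the admissible range $\delta \geq x^{-1/28+\epsilon}$ and the claimed error $O_{\epsilon}(\delta^{2} x^{1-\epsilon})$; the specific numerology $28/5$ and $1/28$ will emerge from balancing the Type~II cost against the trivial estimate, the exponent $5/28$ being precisely the point at which the analogue of Vinogradov's Lemma~8a saves enough to beat the loss incurred by Cauchy--Schwarz.
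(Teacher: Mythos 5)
Your proposal shares the broad architecture of the paper---Fourier-analytic detection of $\norm{p\alpha}_\omega < \delta$, reduction to bilinear Type~I/II sums, and the numerology $x = \Norm(q)^{28/5}$---but two concrete points diverge in ways that matter.

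\emph{Order of sieve and Fourier expansion.} You propose to Fourier-expand the indicator first (via Beurling--Selberg majorants with cut-off $H\asymp\delta^{-1}x^\epsilon$), obtaining a main term plus a sum of prime exponential sums $S(m,n) = \sum_p \e{m\,\ReO{\omega}(p\alpha)+n\,\ImO{\omega}(p\alpha)}$, and then to ``express $\sum_p$ as a linear combination of Type~I and Type~II sums'' to bound each $S(m,n)$. That second step describes Vaughan's or Heath-Brown's identity, not Harman's sieve: Harman's sieve (\cref{thm:HarmansSieve}) is an \emph{asymptotic comparison} of two sifting functions $S(w,x^\kappa)$ and $S(\tilde{w},x^\kappa)$, not a combinatorial decomposition of a prime sum into bilinear pieces. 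The paper instead takes $w = 4\delta^2\boldsymbol{1}_{\mathscr{B}}$ and $\tilde{w} = \boldsymbol{1}_{\mathscr{A}}$ as the two weights and verifies hypotheses \cref{eq:Harman:Type:I}, \cref{eq:Harman:Type:II} for the difference $w-\tilde{w}$; the Fourier expansion (via the truncated saw-tooth $\psi$, \cref{lem:Fourier}, rather than Beurling--Selberg) happens \emph{inside} the verification, over general $m,n\in\integers$, not over primes. Your version may be salvageable via Vaughan's identity, but the uniformity you would need in $(m,n)$ and the loss from sharp cut-offs would have to be tracked from scratch; it is not merely a reordering of the same argument.

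\emph{Type~I treatment.} Using Poisson summation on the lattice with a sharp cut-off over a box/annulus, then appealing to ``Gauss-sum estimates,'' is the wrong tool for this (non-smoothed) theorem. Poisson with an unsmoothed elliptical annulus gives a dual sum with only $1/\abs{k_1k_2}$-type decay, which does not converge absolutely; the paper explicitly reserves Poisson summation for the \emph{smoothed} \cref{thm:ApproxWithPrimes} (\cref{sec:Smoothed}, \cref{lem:PoissonResult-1}), where the Gaussian $e^{-\pi\abs{z}^2/N}$ makes the dual rapidly convergent. For \cref{thm:DiophApprox:Integers} the Type~I inner sum is instead handled by the elementary \cref{lem:LinearExpSumBound}: slice the elliptical annulus into one-dimensional geometric progressions and bound each by $1/\norm{\ReO{\omega}\alpha}$ or $1/\norm{\ImO{\omega}\alpha}$, then average via the spacing results of \cref{thm:ExpSumBound}, which rest on the fact that for $\gcd(a,q)=1$ the points $na/q$ are $1/(4\abs{q\omega})$-separated modulo $\integers$ (\cref{lem:FractionalPartBounds:Pertubed}). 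No Gauss sums appear anywhere---the characters are linear, not quadratic.

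The Type~II step (Cauchy--Schwarz, then the imaginary-quadratic analogue of Vinogradov's lemma) is described correctly and matches \cref{prop:TypeII:ExplicitBound}. But as written, the Type~I step would not close, and the framing of the sieve conflates Harman's comparison method with a combinatorial identity.
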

\begin{rem*}
	(1) Instead of considering the homogeneous condition $\norm{p\alpha}_\omega < \delta$ in the above theorem, one can also consider a shifted version, namely $\norm{p\alpha+\beta}_\omega < \delta$, where $\beta$ is an arbitrary complex number.
	In fact, the authors~\cite{vaughan1978onthedistribution,harman1983on-the-distribution,jia1993on-the-distribution,harman1996on-the-distribu,jia2000on-the-distribution} listed in \cref{tab:PrimeApprox:ExponentImprovements} also consider the shifted analogue of~\cref{eq:DirichletApprox:PrimeConstraint}, but the innovation introduced by \citet{heath-brown2002the-distribution} has, as they remark, the defect of entailing the restriction to $\beta=0$.
	Regardless of this, the methods pursued by us in the present paper are perfectly capable of handling shifts $\beta$ and we merely chose not to implement this for cosmetic reasons.
	
	(2) In his recent preprint \cite{Harmanpreprint} on the case $\numberfieldK=\mathbb{Q}(i)$, 
	Harman achieved the result in \cref{thm:DiophApprox:Integers} with the exponent $7/44$,
	which corresponds to the exponent 7/22 in his classical result \cite{harman1996on-the-distribu} mentioned above. 
	To this end, he didn't use a
	smoothing but introduced a number of novelties to overcome obstacles that are present in the non-smoothed approach. 
	In particular, he was able to handle linear exponential sums over certain regions in $\mathbb{C}$ in an efficient way. It is likely that these novelties can be carried over to all imaginary-quadratic
	fields of class number 1, but we here confine ourselves to the simplest possible treatment, thus obtaining the exponent 1/28 for all fields of this kind. The way we overcome the
	said obstacles is to introduce a smoothing which allows us to use the Poisson summation formula conveniently. 
	This leads us to \cref{thm:ApproxWithPrimes}, where we achieve the exponent 1/8 
	corresponding to Vaughan's result for the classical case 
	\cite{vaughan1978onthedistribution}. To achieve the larger exponent $7/44$ in full generality for the said fields, it would be 
	required to carry over Harman's lower bound sieve, established in \cite{harman1996on-the-distribu}, to them. This is a task we aim to undertake in a separate paper since the proof of the aforementioned 
	sieve result is technically complicated and therefore requires a large amount of extra work. In particular, it involves a number of numerical calculations which are not required for the proof of the basic version
	which we are using here.
\end{rem*}

Still assuming $\numberfieldK$ to have class number~$1$, and appealing to Landau's prime ideal theorem one easily deduces that
\begin{equation}\label{eq:NumberFieldPNT}
	\sum_{x/2\leq \Norm(p) < x} 1
	= \delta_\integers \frac{x}{\log x} (1+o_\integers(1))
	\quad\text{as } x\longrightarrow\infty,
\end{equation}
where $\delta_\integers>0$ is some constant only depending on $\integers$.

Therefore, \cref{thm:DiophApprox:Integers} implies \cref{thm:ApproxWithPrimes} with the exponent $-1/8$ in \cref{eq:-1/16:result} replaced with $-1/28$ provided one is able to verify the existence of infinitely many $a$ and $q$ as required by the theorem; however, the latter problem is already solved by Hilde~Gintner~\cite{gintner1936ueberkettenbr}. In this regard, let $\Lambda$ be the \emph{fundamental parallelogram} spanned by $1$ and $\omega$,
\begin{equation}\label{eq:FundamentalParallelogram}
	\Lambda = \braces{ \lambda_1 + \lambda_2\omega : \lambda_1,\lambda_2\in\ropeninterval{0,1} }.
\end{equation}
\begin{lem*}[\citeauthor{gintner1936ueberkettenbr}]
	Let $\alpha$ be a complex number not contained in $\numberfieldK$. Then there are infinitely many $a,q\in\integers$ satisfying~\cref{eq:ThetaApprox} with
	\(
%		C = \frac{\sqrt{6}}{\pi} \area\Lambda
		C = \pi^{-1}\sqrt{6} \area\Lambda
	\)
	and $\Lambda$ given by~\cref{eq:FundamentalParallelogram}.
\end{lem*}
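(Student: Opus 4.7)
I proceed via a geometry-of-numbers argument. For $\alpha \in \CC \setminus \numberfieldK$, consider the $\CC$-linear map $\phi \colon \CC^2 \to \CC^2$ defined by $\phi(q, a) = (q, q\alpha - a)$, whose image $\Gamma = \phi(\integers^2)$ is a full-rank lattice in $\CC^2 \cong \RR^4$. Since $\phi$ has complex determinant $-1$ (and hence $\RR$-linear determinant $1$) and $\integers^2$ has covolume $(\area\Lambda)^2$ in $\RR^4$, so does $\Gamma$. For $Q > 0$, I look at the symmetric convex body
\[
K_Q = \braces{ (z_1, z_2) \in \CC^2 : \abs{z_1} \leq Q,\ \abs{z_2} \leq R(Q) },
\]
which is a product of two closed discs of total volume $\pi^2 Q^2 R(Q)^2$, and seek a non-zero lattice point of $\Gamma$ inside $K_Q$.

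Applying Minkowski's convex body theorem naively produces such a point whenever $\pi^2 Q^2 R(Q)^2 \geq 2^4 (\area\Lambda)^2$, i.e.\ $QR(Q) \geq 4\area\Lambda/\pi$; this already yields the lemma with the weaker constant $4\area\Lambda/\pi$ in place of $\sqrt{6}\area\Lambda/\pi$. Obtaining Gintner's sharper value $\sqrt{6}\area\Lambda/\pi$ amounts to replacing the crude Minkowski constant $2^4$ by~$6$ in the volume requirement, which rests on a refined packing argument tailored to products of discs in $\CC^2$ (a complex analogue of the hexagonal-packing improvement over Minkowski's bound in $\RR^2$). This sharpening is the main technical obstacle and is precisely the content of the lattice-point estimate in Gintner's paper.

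Granted the sharp lattice-point statement, I conclude as follows. The resulting non-zero $(q, a) \in \integers^2$ satisfies $\abs{q} \leq Q$ and $\abs{q\alpha - a} \leq R(Q) = \sqrt{6}\area\Lambda/(\pi Q)$; for $Q$ larger than a fixed constant the coordinate $q$ must be non-zero, as otherwise $a \in \integers \setminus \braces{0}$ would force $\abs{a} \geq 1 > R(Q)$. Dividing out $d = \gcd(a, q)$ (which exists since $\integers$ is a principal ideal domain under the class-number-one hypothesis) produces coprime $(a', q')$ with the same quotient and, since $\Norm(d) \geq 1$, the preserved bound $\abs{\alpha - a'/q'} \leq C/\Norm(q')$. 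The condition $a'/q' \notin \integers$ is equivalent to $q'$ being a non-unit; since every unit of $\integers$ has absolute value $1$, this holds as soon as $\abs{q'} > 1$. Finally, infinitely many distinct coprime $(a', q')$ arise on letting $Q \to \infty$: otherwise the convergent sequence $a'/q' \to \alpha$ would land in a finite subset of $\numberfieldK$, contradicting $\alpha \notin \numberfieldK$.
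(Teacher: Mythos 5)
The paper itself offers no proof of this lemma: the result is cited verbatim from Gintner's 1936 thesis, and the only further remark the authors make is that, under the class-number-one hypothesis, coprimality of $a$ and $q$ can be arranged a posteriori by cancelling common factors. Your geometry-of-numbers framing is a sensible reconstruction: the lattice $\Gamma = \phi(\integers^2)$ with $\phi(q,a) = (q, q\alpha - a)$, the covolume computation (the $\CC$-determinant $-1$ has $\RR$-determinant $1$, so the covolume $(\area\Lambda)^2$ is preserved), the product-of-discs body $K_Q$ with volume $\pi^2 Q^2 R(Q)^2$, and the observation that Minkowski's convex-body theorem alone yields only the weaker constant $4\area\Lambda/\pi$ are all correct. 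However, you explicitly defer the sharpening to $\sqrt{6}\area\Lambda/\pi$ to a ``lattice-point estimate in Gintner's paper,'' so your argument is not an independent proof: it reduces to the same citation the paper makes, only with more scaffolding in front of it. That is a fair blind reconstruction, but it should be flagged as such.

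Two loose ends in the bookkeeping are worth tightening. First, your treatment of the condition $a'/q' \notin \integers$ is circular: you assert it ``holds as soon as $\abs{q'}>1$'' without establishing $\abs{q'}>1$. The clean argument is that if $a/q = m \in \integers$, then $\abs{\alpha - m} \leq C/\Norm(q)$; since $\alpha \notin \integers$, the quantity $d = \min_{m\in\integers}\abs{\alpha - m}$ is positive, which forces $\Norm(q) \leq C/d$ and confines $m$ to a bounded region, so only finitely many pairs can be ``bad''. Second, your concluding infinitude argument is stated backwards: you invoke ``the convergent sequence $a'/q'\to\alpha$'' before establishing convergence. One should first note that the Minkowski/Gintner point satisfies $\abs{q\alpha - a} \leq C/Q$, and since $\abs{q'}\geq 1$ (every nonzero element of $\integers$ has norm $\geq 1$) this gives $\abs{\alpha - a'/q'} \leq C/(Q\abs{q'}) \leq C/Q \to 0$ as $Q\to\infty$; only then does the assumption of finitely many distinct pairs $(a',q')$ contradict the positivity of $\min\abs{\alpha - a'/q'}$, which is positive precisely because $\alpha\notin\numberfieldK$.
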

Albeit the above lemma does not assert that $a,q$ be coprime, if $\numberfieldK$ has class number~$1$, then one can appeal to unique factorisation and cancel any potential non-trivial common factors from $a$ and $q$. So, indeed, one has the aforementioned relation between \cref{thm:DiophApprox:Integers} and \cref{thm:ApproxWithPrimes}.

% --------------------------------------------------------
\section{Outline of the method}

\subsection{The sieve method}
For the detection of the prime elements in \cref{thm:ApproxWithPrimes} and \cref{thm:DiophApprox:Integers} we use a sieve result due to Harman---with additional smoothing and adapted to our number field setting---which has the pleasant feature of keeping our exposition reasonably tidy.
The underlying sieve method itself and its various refinements are also capable of yielding lower bounds instead of asymptotic formulae, in exchange for the prospect of increasing the admissible range for $\delta$ in~\cref{eq:DeltaRange} (or $\theta$ in the main question), but we do not implement this here.
The interested reader is referred to Harman's exposition of his method~\cite{harman2007primedetectingsieves}.
The following special case suffices for our purposes; we write $d_k(\mathfrak{a})$ for the number of ways in which an ideal $\mathfrak{a}\subseteq\integers$ can be written as a product of $k$ ideals of $\integers$, and we write $d(\mathfrak{a}) = d_2(\mathfrak{a})$.

\begin{thm}[Weighted version of Harman's sieve for $\integers$]
	\label{thm:HarmansSieve}
	Suppose that $\integers$ has class number~$1$ and let $x\geq3$ be real.
	Let $w,\tilde{w}\colon\integers \to \brackets{0,1}$ be two functions such that, for both $\omega=w$ and $\omega=\tilde{w}$,
	\begin{equation}\label{eq:Convergence}
		\lim_{R\to\infty} \sum_{\substack{ r\in\integers \\ \mathclap{\Norm(r)<R} }} d_4(r\integers) \omega(r) \leq X
	\end{equation}
	for some $X\geq 1$ and assume that $\omega(r)=\omega(\tilde{r})$ for any pair of associate elements $r,\tilde{r}\in\integers$.
	Suppose further that one has numbers $Y > 1$, $0<\mu<1$, $0 < \kappa \leq \tfrac{1}{2}$, and $M \in (x^{\mu},x)$ with the following property:\\
	For any sequences $(a_{m})_{m\in\integers}$, $(b_{n})_{n\in\integers}$ of complex numbers with $\abs{a_{m}}\leq 1$ and $\abs{b_{n}}\leq d\parentheses{n\integers}$, one has
	\begin{gather}
		\label{eq:Harman:Type:I}
		\abs[\bigg]{
			\mathop{\sum\!\sum}_{\substack{ m,n\in\integers\setminus\braces{0} \\ \mathclap{0< \Norm(m)<M }}} a_m (w(mn) - \tilde{w}(mn))
		} \leq Y, \\
		\label{eq:Harman:Type:II}
		\abs[\bigg]{
			\mathop{\sum\!\sum}_{\substack{ m,n\in\integers\setminus\braces{0} \\ \mathclap{ x^\mu < \Norm(m) < x^{\mu+\kappa} }}} a_m b_n (w(mn) - \tilde{w}(mn))
		} \leq Y.
	\end{gather}
	Then
	\[
		\abs{ S(w,x^{\kappa}) - S(\tilde{w},x^{\kappa}) } \ll Y (\log(xX))^3,
	\]
	where
	\begin{equation}\label{eq:SiftingFunction}
		S(\omega,x^{\kappa}) = \sum_{\substack{ r\in\integers\setminus\braces{0} \\ \text{\rm prime } p\mid r \Rightarrow \Norm(p)\geq x^\kappa }} \omega(r),
	\end{equation}
	and the implied constant is absolute.
	(Here all infinite series appearing in \cref{eq:Harman:Type:I,eq:Harman:Type:II,eq:SiftingFunction} are guaranteed to be absolutely convergent by~\cref{eq:Convergence}.)
\end{thm}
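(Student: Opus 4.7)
The plan is to follow the standard Buchstab iteration scheme, applied to the difference weight $\Delta(r) = w(r) - \tilde{w}(r)$. The hypothesis that $w$ and $\tilde{w}$ are constant on classes of associates means every sum over $r\in\integers\setminus\{0\}$ admits an unambiguous interpretation as a sum over non-zero ideals; since $\integers$ is a unique factorisation domain by assumption, the usual Buchstab identity
\[
S(\omega,z) = S(\omega,z_0) - \sum_{z_0\leq\Norm(p)<z} S(\omega^{(p)},\Norm(p)), \qquad \omega^{(p)}(n)=\omega(pn),
\]
is available for any $z_0 \leq z$, with absolute convergence of all tail sums guaranteed by~\cref{eq:Convergence}. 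Throughout, $p$ would range over a fixed system of prime representatives modulo units.

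Starting from $S(\omega,x^\kappa)$, I would iterate this identity whenever the extracted partial product $m = p_1\cdots p_j$ (with $\Norm(p_1) > \cdots > \Norm(p_j)$, each $<x^\kappa$) still satisfies $\Norm(m) < x^\mu$, applying Buchstab to $S(\omega^{(m)},\Norm(p_j))$ with a fresh sifting threshold. Subtracting the two copies corresponding to $\omega\in\{w,\tilde{w}\}$ would produce a decomposition
\[
S(w,x^\kappa) - S(\tilde{w},x^\kappa) = \Sigma_{\mathrm{I}} + \Sigma_{\mathrm{II}},
\]
where $\Sigma_{\mathrm{I}}$ collects those contributions in which the inner sifting function has been completely unrolled, leaving a free inner sum over $n$, while $\Sigma_{\mathrm{II}}$ collects the residual pieces where iteration halted because $\Norm(m)$ first crossed the threshold $x^\mu$.

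The $\Sigma_{\mathrm{II}}$ pieces should then fall under~\cref{eq:Harman:Type:II}: the nested constraint $\Norm(p_{j+1})<\Norm(p_j)<\cdots<x^\kappa$ forces any $m$ produced at the stopping step to satisfy $x^\mu \leq \Norm(m) < x^{\mu+\kappa}$, placing $m$ in the Type~II range. The remaining inner sifted factor can be majorised by the divisor function, giving $|b_n|\leq d(n\integers)$, while the combinatorial coefficients attached to the ordered tuple of extracted primes collect into $|a_m|\leq 1$ after normalisation; each resulting sum is then bounded by $Y$. The $\Sigma_{\mathrm{I}}$ pieces have the shape $\sum_{\Norm(m)<x^\mu} a_m \sum_n \Delta(mn)$ with $|a_m|\leq 1$ and $x^\mu<M$, and are bounded by $Y$ via~\cref{eq:Harman:Type:I}. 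Summing over iteration depth and orderings of extracted primes costs a factor of $O((\log xX)^2)$ through a Mertens-type estimate $\sum_{\Norm(p)<x}\Norm(p)^{-1}\ll\log\log x$, and absorbing one more logarithmic factor for the outer iteration depth produces the stated bound.

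The principal obstacle is the combinatorial bookkeeping required to guarantee that every term produced by the iteration lands either in $\Sigma_{\mathrm{I}}$ or in $\Sigma_{\mathrm{II}}$, with no leftover \enquote{Type~III} contribution; this is precisely what the halting rule $\Norm(m)<x^\mu$ combined with $\Norm(p_i)<x^\kappa$ secures, since overshoot is prevented by the nested ordering of the $p_i$. A secondary care point is absolute convergence of the iterated tail series, which is exactly where the quantitative hypothesis~\cref{eq:Convergence} on $\sum d_4(r\integers)\omega(r)$ enters: the weight $d_4$ suffices to absorb both the divisor majorisation of the sifting function and the repeated extraction of primes. Once these points are in place, the proof reduces to a routine application of the two bilinear hypotheses and a collection of logarithms.
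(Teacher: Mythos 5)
Your skeleton is right and matches the paper's strategy: Eratosthenes--Legendre plus Buchstab iteration, organising terms into a Type~I piece ($\Norm(m)$ small) and Type~II pieces where the accumulated product $m=p_1\cdots p_s$ first crosses $x^\mu$ and therefore lies in $(x^\mu, x^{\mu+\kappa})$. However, there is a genuine gap at the decisive technical step, and it is exactly the point the paper flags as the source of difficulty.

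After you halt the iteration, each $\Sigma_{\mathrm{II}}$ contribution has the form
\begin{equation*}
	\sum_{(p_1,\ldots,p_s)\in\mathscr{P}_s}\ \sum_{n} b_n(m)\,\bigl(w(mn)-\tilde{w}(mn)\bigr),\qquad m=p_1\cdots p_s,
\end{equation*}
where $b_n(m)$ is the indicator that $n$ has no prime factor of norm below $\Norm(p_s)$, possibly twisted by a further $\mu(d)$ and a constraint $\Norm(mpd)\geq M$ once one Buchstab step is applied to isolate the main Type~II term. The point is that $b_n$ genuinely depends on $m$ through $\Norm(p_s)$ and through $\Norm(mpd)\geq M$. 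The hypothesis~\cref{eq:Harman:Type:II} requires a \emph{single} pair of sequences $(a_m)$, $(b_n)$ with no cross-dependence, so your claim that ``each resulting sum is then bounded by $Y$'' does not follow as stated. The paper resolves this by separating a clean Type~II term $S_{\mathrm{II},1}(s)$ and then using a Perron-type formula (\cref{lem:PerronVariant}) to replace the indicators $\ConditionalOne{\Norm(mpd)\geq M}$ and $\ConditionalOne{\Norm(p)<\Norm(p_s)}$ by oscillatory integrals; the two powers of $\log(xX)$ come from estimating these integrals with $T=xX$, not from a Mertens-type bound, and the third power comes from the iteration depth $t\ll\log x$.

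A secondary gap, specific to the number-field setting and also emphasised in the paper, is that distinct non-associate primes can share a norm, so the condition ``$p$ smaller than $p_s$'' must be realised via a fixed total order $\prec$ refining the norm ordering, and the cross-condition then splits as $\Norm(p)<\Norm(p_s)$ versus $\Norm(p)=\Norm(p_s)$. The latter case needs its own (short) argument. Your proposal does not mention either complication, and without the Perron step the reduction to~\cref{eq:Harman:Type:II} simply cannot be carried out.

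Finally, a small point on bookkeeping: your $\Sigma_{\mathrm{I}}$ as described collects $m$ with $\Norm(m)<x^\mu$, which is indeed handled by~\cref{eq:Harman:Type:I} since $x^\mu<M$, but the paper's organisation first splits the Eratosthenes--Legendre sum at $\Norm(m)<M$ versus $\Norm(m)\geq M$, and then performs the Buchstab iteration only on the latter; this is where $M$ enters and where the cut condition $\Norm(mpd)\geq M$ (the one needing Perron) comes from. Your version does not make this cut explicit, which obscures the need for the separation step.
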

\begin{rem}\label{rem:OverallProcedure}
	We comment briefly on how we apply the above theorem: informally speaking, the goal is to choose the weight functions $w$ and $\tilde{w}$ in such a way that essentially only elements $r$ with $\Norm(r)<x$ contribute in the definition of $S(w,x^\kappa)$ and $S(\tilde{w},x^\kappa)$. Then, choosing $\kappa=\frac{1}{2}$, these $r$ are guaranteed to be prime, and, for $\omega\in\braces{w,\tilde{w}}$,
	\[
		S(\omega,\sqrt{x}) \approx \sum_{\substack{ \text{prime } p \in \integers \\ \sqrt{x} \leq \Norm(p)<x }} \omega(p).
	\]
	The choice of $w$ is made as to guarantee that $S(w,\sqrt{x})$ is essentially a known quantity by an appeal to~\cref{eq:NumberFieldPNT}, and $\tilde{w}$ is tailored to enforce a restriction such as $\norm{p\alpha}_\omega \leq \delta$ as in \cref{thm:DiophApprox:Integers}.
	Finally, assuming that one proves suitably strong versions of~\cref{eq:Harman:Type:I} and~\cref{eq:Harman:Type:II}, \cref{thm:HarmansSieve} asserts that $S(\tilde{w},\sqrt{x})$ must be of similar magnitude as $S(w,\sqrt{x})$, and, therefore, one ascertains information about the abundance of prime elements with the desired properties as encoded in the weight function $\tilde{w}$.
\end{rem}

% -------------------------------------
% Cut this completely?
The proof of \cref{thm:HarmansSieve} is essentially identical to the usual proof of Harman's sieve in the setting of $\ZZ$~\cite[\S\S~3.2--3.3]{harman2007primedetectingsieves}; it uses the sieve of Eratosthenes--Legendre to relate $S(w,x^\kappa) - S(\tilde{w},x^\kappa)$ to certain sums with more variables and applies Buchstab's identity multiple times to produce variables in the correct ranges for \cref{eq:Harman:Type:I,eq:Harman:Type:II} to become applicable. In the process of doing so, one has to remove certain cross-conditions between summation variables---a feat which is accomplished using a variant of Perron's formula (see \cref{lem:PerronVariant} below).
In our number field setting, this results in a slight complication which is not present when working over the rational integers: at some point one is faced with having to remove a condition of the type $\Norm(p) \leq \Norm(\tilde{p})$ from a double sum with summation variables $p$ and $\tilde{p}$ assuming only non-associate prime elements as values (see the arguments around~\cref{eq:S_II_2:Decomposition} below).
However, this problem can be overcome.
%Here the possible existence of two non-associate prime elements with equal norm bars one from immediately applying Perron's formula.
%However, on splitting the contribution from said elements off as an additional intermediate step, the modified problem yields to the arguments outlined in~\cite{harman2007primedetectingsieves}.
For the convenience of the reader we provide a detailed proof of \cref{thm:HarmansSieve} in \cref{sec:HarmansSieveProof} below.
% -------------------------------------

\subsection{Outline of the rest of the paper}
Apart from proving \cref{thm:HarmansSieve} in \cref{sec:HarmansSieveProof}, we proceed as follows: by the outline given in \cref{rem:OverallProcedure}, the bulk of the remaining work lies in the verification of~\cref{eq:Harman:Type:I} and~\cref{eq:Harman:Type:II} for the two choices of $w$ and $\tilde{w}$ that we use below (non-smoothed and smoothed). Both arguments ultimately hinge on distribution results related to the sequence $\norm{n\alpha}_\omega$ ($n\in\integers$). We devote our attentions to establishing such results first. This is done in \cref{sec:ExponentialSumEstimates}, with its principal results being stated in \cref{thm:ExpSumBound} and \cref{thm:Diophantine}. \\
\Cref{sec:HarmanInput} is devoted to proving \cref{thm:DiophApprox:Integers}, with this goal being achieved in \cref{subsec:AssemblingTheParts}. The results concerning~\cref{eq:Harman:Type:I} and~\cref{eq:Harman:Type:II} are recorded in \cref{prop:TypeI:ExplicitBound} and \cref{prop:TypeII:ExplicitBound} respectively. \\
In \cref{sec:Smoothed} we undertake proving \cref{thm:ApproxWithPrimes}. The analogues of the aforementioned propositions are \cref{prop:Type-I-1} and \cref{prop:Type-II-1} and their proof is largely parallel to the proof of their non-smoothed counterparts.
The main innovation here is contained in \cref{lem:PoissonResult-1}, which takes advantage of the smooth weights by means of Poisson's summation formula.

% --------------------------------------------------------
\section{Exponential sum estimates}
\label{sec:ExponentialSumEstimates}

In \cref{sec:HarmanInput} we need estimates for sums of the shape
\[
	\sum_n \sum_m \e{ \ImO{\omega}(mn\alpha) },
\]
where the summation over $m\in\integers$ is restricted to some annulus $x(n)\leq \Norm(m) \leq y(n)$ with bounds $x(n)$ and $y(n)$ depending on $n\in\integers$. In \cref{subsec:LinearExpSumBound} we estimate the inner summation and in \cref{subsec:DistribFracBounds} we deal with the additional summation over $n$. The corresponding proof is then carried out in the subsequent sections. Moreover, using the tools developed in \cref{sec:DiophantineLemmas}, we also establish a closely related result which is useful in \cref{sec:Smoothed} (see \cref{thm:Diophantine}).

In what follows, we sometimes have expressions like $1/\norm{ \ReO{\omega}\alpha }$; if a division by zero occurs there, then the result is understood to mean $+\infty$.

% --------------------------------------------------------
\subsection{Some facts about quadratic extensions}
Before being able to tackle the problem outlined above, we take the opportunity to record here some basic facts about quadratic extensions which we use throughout.
\begin{lem}
	\label{lem:AlgebraicFacts}
	Let $\numberfieldK\subset\CC$ be an imaginary quadratic number field with ring of integers $\integers$ and generator $\omega$ of $\integers$, that is, $\integers = \ZZ[\omega]$. Then the following statements hold:
	\begin{enumerate}
		\item\label{enu:UnitsInRingOfIntegers} The number of units in $\integers$ is bounded by six.
		\item\label{enu:ImOmegaLowerBound} $\abs{\Im\omega} \geq \sqrt{3}/2$.
		\item\label{enu:2ReOmegaIsIntegral} $2\Re\omega$ is an integer.
		\item\label{enu:LatticeCount:UpperBound} For $y\geq 1$, $\#\braces{ n\in\integers : \Norm(n) \leq y} \ll y$, where the implied constant is absolute.
	\end{enumerate}
\end{lem}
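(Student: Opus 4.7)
The plan is to derive everything from the minimal polynomial of $\omega$ over $\QQ$. Since $\omega$ is an algebraic integer generating a degree-two extension, its minimal polynomial is a monic quadratic $T^2+bT+c\in\ZZ[T]$, and, because $\numberfieldK\not\subset\RR$, the discriminant $b^2-4c$ is negative. Solving for $\omega$ gives
\[
	\omega=-\frac{b}{2}\pm\frac{i\sqrt{4c-b^2}}{2},
\]
which immediately yields $2\Re\omega=-b\in\ZZ$, proving~\cref{enu:2ReOmegaIsIntegral}. For \cref{enu:ImOmegaLowerBound}, I observe that $4c-b^2$ is a positive integer congruent to $-b^2\pmod 4$, hence congruent to either $0$ or $3$ mod $4$; in either case $4c-b^2\geq 3$, so $\abs{\Im\omega}=\tfrac{1}{2}\sqrt{4c-b^2}\geq\sqrt{3}/2$.

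For \cref{enu:UnitsInRingOfIntegers,enu:LatticeCount:UpperBound}, I would parametrise $\integers$ by $\braces{u_1+u_2\omega:u_1,u_2\in\ZZ}$ and exploit the identity
\[
	\abs{u_1+u_2\omega}^2=(u_1+u_2\Re\omega)^2+u_2^2(\Im\omega)^2.
\]
A unit $u=u_1+u_2\omega$ satisfies $\abs{u}^2=1$; by \cref{enu:ImOmegaLowerBound} this forces $u_2^2\leq 4/3$, leaving only $u_2\in\braces{-1,0,1}$, and for each such $u_2$ the remaining equation has at most two real solutions in $u_1$, yielding at most six units in total. For \cref{enu:LatticeCount:UpperBound}, the same identity together with \cref{enu:ImOmegaLowerBound} gives $u_2^2\leq 4y/3$, and for each admissible $u_2$ the value of $u_1$ is confined to an interval of length $2\sqrt{y}$; multiplying the two counts yields $(4\sqrt{y/3}+1)(2\sqrt{y}+1)\ll y$ for $y\geq 1$ with an absolute implied constant.

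Each step is routine, the real content being the lower bound $\abs{\Im\omega}\geq\sqrt{3}/2$, which propagates through the rest. The only mild subtlety for \cref{enu:LatticeCount:UpperBound} is the requirement that the implied constant be \emph{absolute}, which is why I avoid any lattice-density argument invoking the diameter of the fundamental parallelogram (a quantity that depends on $\numberfieldK$) and instead bound $u_1$ and $u_2$ separately using only the uniform lower bound on $\abs{\Im\omega}$.
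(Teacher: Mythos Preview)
Your proof is correct and follows essentially the same strategy as the paper's: both hinge on the uniform lower bound $\abs{\Im\omega}\geq\sqrt{3}/2$, and both derive \cref{enu:2ReOmegaIsIntegral} and \cref{enu:ImOmegaLowerBound} from the explicit form of $\omega$ as a root of a monic integral quadratic. The paper obtains this form by writing $\omega=k\pm\tilde\omega$ with $\tilde\omega=\tfrac12(D+\sqrt{D})$ for the field discriminant $D$, whereas you work directly with the minimal polynomial $T^2+bT+c$; these are equivalent parametrisations.

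Where you diverge slightly is in \cref{enu:UnitsInRingOfIntegers} and \cref{enu:LatticeCount:UpperBound}. The paper simply cites Hardy--Wright for the unit bound and appeals to the ellipse-area asymptotic $\pi y/\abs{\Im\omega}$ for the lattice count; you instead give self-contained arguments by bounding $u_2$ via $\abs{\Im\omega}\geq\sqrt{3}/2$ and then $u_1$ via the residual real equation. Your approach has the virtue of making the absolute constant in \cref{enu:LatticeCount:UpperBound} transparently independent of $\omega$, a point the paper's ellipse argument leaves implicit (one would otherwise need to check that the error term in the lattice-point count is uniformly controlled). Both routes are valid; yours is marginally more elementary and better suited to the uniformity claim.
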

\begin{proof}
	The first assertion may be found in \cite{hardy2008anintroductionto}.
	
	For the second assertion just observe that there is some negative square-free rational integer $d$ such that $\numberfieldK = \QQ(\sqrt{d})$. Letting $D = 4d$ if $d\not\equiv 1\bmod 4$, and $D = d$ otherwise, we have $\integers = \ZZ[\tilde{\omega}]$, where $\tilde{\omega} = \frac{1}{2}(D+\sqrt{D})$. An elementary calculation shows that $\omega$ is of the form $k\pm{\tilde\omega}$ for some rational integer $k$. This already proves~\cref{enu:2ReOmegaIsIntegral}. Moreover, from this and a short computation one immediately obtains~\cref{enu:ImOmegaLowerBound} (with equality being attained for $d=-3$).
	
	Concerning the last assertion, we note that the quantity bounded therein, $\#\braces{ (n_1,n_2)\in\ZZ^2 : \abs{n_1+n_2\omega}^2 \leq y }$, counts points inside some ellipse. Elementary arguments suffice to show that this is asymptotically equal to $\pi y/\abs{\Im\omega}$ and the uniform lower bound for $1/\abs{\Im\omega}$ furnished by the second assertion finishes the proof.
\end{proof}

% --------------------------------------------------------
\subsection{Basic estimates for linear exponential sums}
\label{subsec:LinearExpSumBound}

\begin{lem}
	\label{lem:LinearExpSumBound}
	Let $\alpha$ be a complex number and suppose that one has numbers $x,y$ such that $0\leq x\leq y$. Then
	\begin{equation}\label{eq:LinearExpSumBound}
		\abs[\bigg]{
			\sum_{x\leq \Norm(m) \leq y} \e{ \ImO{\omega}(m\alpha) }
		} \ll \Norm(\omega) \sqrt{y} \min\braces*{
			\sqrt{y}, \frac{1}{\norm{ \ReO{\omega}\alpha }}, \frac{1}{\norm{ \ImO{\omega}\alpha }}
		}.
	\end{equation}
\end{lem}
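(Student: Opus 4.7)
The plan is to reparametrise the sum over $\integers$ by a convenient $\ZZ$-basis so that the phase decouples into two independent linear phases $\e{k_1\alpha_2}\e{k_2\alpha_1}$, and then to estimate the resulting two-dimensional sum in three different ways---a trivial lattice-point count, and the two iterated one-dimensional geometric sums corresponding to the two orderings of summation---with the minimum of the three yielding \cref{eq:LinearExpSumBound}.

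First I would write $m = m_1 + m_2\omega$ with $m_1, m_2 \in \ZZ$, set $\alpha_1 = \ReO{\omega}\alpha$ and $\alpha_2 = \ImO{\omega}\alpha$, and use $\omega^2 = t\omega - \Norm(\omega)$ with $t = \omega + \bar\omega \in \ZZ$ (guaranteed by \cref{lem:AlgebraicFacts}) to compute
\[
    \ImO{\omega}(m\alpha) = (m_1 + tm_2)\alpha_2 + m_2\alpha_1.
\]
Setting $k_1 := m_1 + tm_2$ and $k_2 := m_2$ gives a unimodular bijection of $\ZZ^2$ with itself under which $m = k_1 - k_2\bar\omega$, and hence
\[
    \Norm(m) = (k_1 - k_2\Re\omega)^2 + (k_2\Im\omega)^2.
\]
The sum in \cref{eq:LinearExpSumBound} thus equals $\sum \e{k_1\alpha_2}\e{k_2\alpha_1}$, extended over those $(k_1,k_2) \in \ZZ^2$ for which the right-hand side above lies in $\brackets{x,y}$; geometrically, this is (the lattice inside) a sheared and stretched Euclidean annulus.

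Next I would produce three bounds. Trivially, the summation region is contained in a rectangle of dimensions $\ll \sqrt{y} \times \sqrt{y}/\abs{\Im\omega}$, giving $\ll y/\abs{\Im\omega}$. For $k_1$-inner summation, each of the $\ll \sqrt{y}/\abs{\Im\omega}$ admissible values of $k_2$ contributes at most two $k_1$-intervals of total length $\ll \sqrt{y}$, so the standard bound $\sum_{M \le k \le M'} \e{k\beta} \ll \min\braces{M'-M+1, 1/\norm{\beta}}$ produces a total of $\ll (\sqrt{y}/\abs{\Im\omega})\min\braces{\sqrt{y}, 1/\norm{\alpha_2}}$. For $k_2$-inner summation, treating $\Norm(m) \le y$ as a convex quadratic in $k_2$ and completing the square shows both that $\abs{k_1} \ll \sqrt{y}\abs{\omega}/\abs{\Im\omega}$ and that the admissible $k_2$ lie in at most two intervals of total length $\ll \sqrt{y}/\abs{\omega}$, giving a total of $\ll (\sqrt{y}\abs{\omega}/\abs{\Im\omega})\min\braces{\sqrt{y}/\abs{\omega}, 1/\norm{\alpha_1}}$.

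Finally I would combine the three bounds and use $\abs{\Im\omega} \ge \sqrt{3}/2$ from \cref{lem:AlgebraicFacts} together with $\Norm(\omega) \ge 1$ (since $\omega \ne 0$ in $\integers$) to verify case-by-case that each of the partial bounds $y/\abs{\Im\omega}$, $\sqrt{y}/(\abs{\Im\omega}\norm{\alpha_2})$, and $\sqrt{y}\abs{\omega}/(\abs{\Im\omega}\norm{\alpha_1})$ is dominated by the corresponding term inside $\Norm(\omega)\sqrt{y}\min\braces{\sqrt{y}, 1/\norm{\ReO{\omega}\alpha}, 1/\norm{\ImO{\omega}\alpha}}$. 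The one point requiring some care is the $k_2$-inner case, where the combination of the enlarged $k_1$-range and the inner bound costs a factor $\abs{\omega} = \sqrt{\Norm(\omega)}$ compared with the $k_1$-inner argument; absorbing this into the stated $\Norm(\omega)$ via $\sqrt{\Norm(\omega)} \le \Norm(\omega)$ is harmless but is what forces the factor $\Norm(\omega)$ in the statement.
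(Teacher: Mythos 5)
Your proposal is correct and follows essentially the same route as the paper: reparametrise via the unimodular change $(m_1,m_2)\mapsto(k_1,k_2)$ with $k_1=m_1+tm_2$ (the paper's $\ell_1=m_1+m_2\xi_2$, since $\xi_2=2\Re\omega=t$), decouple the phase into $\e{k_1\ImO{\omega}\alpha}\e{k_2\ReO{\omega}\alpha}$, and apply the one-dimensional geometric-series bound in each of the two orders of summation together with the trivial count. Your completing-the-square bound $\abs{k_1}\le\sqrt{y}\,\abs{\omega}/\abs{\Im\omega}$ is in fact sharper than the paper's cruder estimate $\abs{\ell_1}\le(2+2/\sqrt3)\Norm(\omega)\sqrt y$, but both comfortably yield the stated $\Norm(\omega)$ factor.
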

\begin{proof}
	\begingroup
	\newcommand{\linearSum}[2]{\operatorname{Lin}(#1,#2)}
	We may assume $y\geq 1$, for~\cref{eq:LinearExpSumBound} is trivial otherwise.
	We denote the sum on the left hand side of~\cref{eq:LinearExpSumBound} by $\linearSum{x}{y}$. On writing
	\begin{equation}\label{eq:OmegaSquared:XiDecomp}
		\omega^2 = \xi_1 + \xi_2\omega
		\quad (
			\xi_1 = \ReO{\omega}\omega^2, \quad
			\xi_2 = \ImO{\omega}\omega^2
		),
	\end{equation}
	$m=m_1 +m_2\omega$ and $\ell_1 =m_1 +m_2\xi_2$, we obtain the following two expressions for $\ImO{\omega}\parentheses{m\alpha}$:
	\begin{align*}
		\ImO{\omega}\parentheses{m\alpha} &
		= m_2 \parentheses{\ReO{\omega}\alpha + \xi_2\ImO{\omega}\alpha} + m_1 \ImO{\omega}\alpha %\\ &
		= \ell_1 \ImO{\omega}\alpha + m_2\ReO{\omega}\alpha.
	\end{align*}
	Therefore,
	\begin{equation}\label{eq:EllipticalDiskSlicing1}
		\linearSum{0}{y} = 
		\mathop{\sum_{m_1}\sum_{m_2}}_{\mathclap{0\leq \Norm(m_1+m_2\omega)\leq y}} \e{ m_2\parentheses{ \ReO{\omega}\alpha + \xi_2\ImO{\omega}\alpha } } \e{ m_1\ImO{\omega}\alpha }
	\end{equation}
	and
	\begin{equation}\label{eq:EllipticalDiskSlicing2}
		\linearSum{0}{y} = 
		\mathop{\sum_{\ell_1}\sum_{m_2}}_{\mathclap{0\leq\Norm(\ell_1-m_2\xi_2+m_2\omega)\leq y}} \e{\ell_1\ImO{\omega}\alpha} \e{m_2\ReO{\omega}\alpha}.
	\end{equation}
	So, on recalling the well-known bound
	\[
		\abs[\bigg]{ \sum_{a\leq j\leq b} \e{j\rho} }
		\leq \frac{1}{2\norm{\rho}}
		\quad (a,b,\rho\in\RR),
	\]
	and using the triangle inequality on the outer summations in~\cref{eq:EllipticalDiskSlicing1} and \cref{eq:EllipticalDiskSlicing2},
	we obtain
	\begin{gather}
		\label{eq:LinearExpSumBound:Im}
		\abs{\linearSum{0}{y}}
		\leq \frac{ \#\braces{m_2 : \exists m_1 \text{ s.t. }0\leq \Norm(m_1 +m_2\omega) \leq y} }{ 2\norm{ \ImO{\omega}\alpha }  }, \\
		\label{eq:LinearExpSumBound:Re}
		\abs{\linearSum{0}{y}}
		\leq \frac{ \#\braces{\ell_1  : \exists m_2\text{ s.t. }0\leq \Norm(\ell_1 -m_2\xi_2+m_2\omega) \leq y} }{ 2\norm{ \ReO{\omega}\alpha } }.
	\end{gather}
	The numerators here are bounded easily: indeed, since
	\[
		\Norm(m_1+m_2\omega)
		\geq m_2^2 (\Im\omega)^2,
	\]
	using $y\geq1$ and \cref{lem:AlgebraicFacts}~\cref{enu:ImOmegaLowerBound}, one easily bounds the numerator in~\cref{eq:LinearExpSumBound:Im} by $4\sqrt{y}$.
	On the other hand for $\abs{\ell_1 }>c\sqrt{y}$ with $c = \parentheses{2+\frac{2}{\sqrt{3}}} \Norm(\omega)$ there is no $m_2$ such that
	\begin{equation}\label{eq:NormMaxCoordLowerBound}
		\begin{aligned}
			y & \geq \Norm(\ell_1 -m_2\xi_2+m_2\omega) \\ &
			\geq \max\braces{ \abs{\ell_1 -m_2 \xi_2 + m_2\Re\omega}^2,\, m_2^2 (\Im\omega)^2 },
		\end{aligned}
	\end{equation}
	for otherwise it would follow that
	\[
		\sqrt{y} > c\sqrt{y} - \abs{m_2} \cdot \abs{\xi_2-\Re\omega}
	\]
	so that $\abs{m_2} \cdot \abs{\xi_2-\Re\omega} > \parentheses{c-1} \sqrt{y}$, but then
	\begin{align*}
		\abs{ m_2 \Im\omega } & > \frac{c-1}{\abs{\xi_2}+\abs{\Re\omega}} \sqrt{y}
		\geq \frac{\parentheses{1+2/\sqrt{3}}\abs{\omega}^2 }{\abs{\omega}^2 +\abs{\omega}} \sqrt{y} %\\ &
		= \frac{1+2/\sqrt{3}}{1+\abs{\omega}^{-1}} \sqrt{y}
		\geq \sqrt{y},
	\end{align*}
	in contradiction to~\cref{eq:NormMaxCoordLowerBound}. Hence, the numerator in~\cref{eq:LinearExpSumBound:Re} is bounded by $2c\sqrt{y}+1\leq 8\abs{\omega}^2 \sqrt{y}$. Thus,
	\[
		\abs{\linearSum{0}{y}} \leq 8 \abs{\omega}^2 \sqrt{y} \min\braces*{
			\frac{1}{2\norm{\ImO{\omega}\alpha}},
			\frac{1}{2\norm{\ReO{\omega}\alpha}}
		}
	\]
	and, together with the trivial bound $\abs{\linearSum{x}{y}} \ll y$ from \cref{lem:AlgebraicFacts}~\cref{enu:LatticeCount:UpperBound}, this is clearly satisfactory to establish~\cref{eq:LinearExpSumBound} for $x=0$, and the case $x>0$ follows from this bound and
	\[
		\linearSum{x}{y} = \linearSum{0}{y} - \lim_{\epsilon\searrow0} \linearSum{0}{x-\epsilon}.
		\qedhere
	\]
	\endgroup
\end{proof}
%\subfile{numberfieldapprox-fig-summation-direction.tex}
%\figureExpSumSummationDirection
%\begin{rem}
%	The reader will note that the summation along $\ell_1 $ in the above proof
%%	(see~\cref{fig:ExpSum:SummationDirection})
%	is not a mere cosmetic tool, but actually has a qualitative effect: taking the perhaps more natural approach of summing over $m_2$ and with $m_1 $ fixed (and the other way around) only supplies the bound
%	\[
%		\abs[\bigg]{
%			\sum_{x\leq \Norm(m)\leq y} \e{ \ImO{\omega}(m\alpha) }
%		}
%		\ll \Norm(\omega) \sqrt{y} \min\braces*{
%			\sqrt{y},
%			\frac{1}{\norm{ \ImO{\omega}\alpha }},
%			\frac{1}{\norm{ \ImO{\omega}(\omega\alpha) }}
%		},
%	\]
%	which is a dead-end, since for this an analogue of \cref{lem:FractionalPartBounds} below cannot be obtained. (The reader is invited to check the last assertion.)
%\end{rem}

% --------------------------------------------------------
\subsection{Distribution of fractional parts}
\label{subsec:DistribFracBounds}

In view of \cref{lem:LinearExpSumBound} and recalling the goal stated at the beginning of \cref{sec:ExponentialSumEstimates}, we are faced with the problem of estimating sums of the shape
\begin{equation}\label{eq:E:average}
	\sum_{n\in\mathscr{X}} E(n,M),
\end{equation}
where $M\geq 2$, $\mathscr{X}$ is some subset of $\braces{ n\in\integers : 1\leq \Norm(n) < x}$ with $x\geq 1$, and
\begin{equation}\label{eq:LinearSumBound}
	E(n,M) = \min\braces*{
		M,
		\frac{1}{\norm{ \ReO{\omega}(n\alpha) }},
		\frac{1}{\norm{ \ImO{\omega}(n\alpha) }}
	}.
\end{equation}
The usual attack against such a problem is to replace $\alpha$ by some Diophantine approximation $a/q$. Subsequently, after bounding the error introduced from the approximation, one is able to control averages of $E(n,M)$ with $n$ constrained to boxes (say) not too large in terms of $\abs{q}$.
By splitting the full range of $n$ in~\cref{eq:E:average} into such boxes, one derives a bound for~\cref{eq:E:average} of the shape seen in \cref{thm:ExpSumBound} below.
\begin{thm}
	\label{thm:ExpSumBound}
	Let $\mathscr{X}$ be a subset of all $n\in\integers$ with $1\leq \Norm(n)<x$ and suppose that one has coprime $a,q\in\integers$ satisfying~\cref{eq:ThetaApprox}.
	Put
	\[
		S = \sum_{n\in\mathscr{X}} \min\braces*{
			M,
			\frac{1}{\norm{ \ReO{\omega}(n\alpha) }},
			\frac{1}{\norm{ \ImO{\omega}(n\alpha) }}
		}.
	\]
	Then, assuming $M\geq 2$,
	\begin{equation}\label{eq:ExpSumAverageBound:1}
		S \ll
		\parentheses{1 + C^2 \Norm(\omega^2) x/\Norm(q)}
		\parentheses{M + \Norm(q\omega) \log M}.
	\end{equation}
	Furthermore, if $x \leq \Norm(q) / \parentheses{12C \Norm(\omega)}^2$, then
	\begin{equation}\label{eq:ExpSumAverageBound:2}
		S \ll \Norm(q\omega) \log \Norm(q\omega).
	\end{equation}
\end{thm}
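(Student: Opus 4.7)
The plan is to follow a Vinogradov-style strategy: approximate $\alpha$ by $a/q$ and partition $\mathscr{X}$ according to residues modulo $q$. For $n\equiv\rho\pmod q$, one has $n\alpha\equiv\rho a/q + n\gamma\pmod{\integers}$, and $\abs{n\gamma}\leq C\sqrt{x}/\Norm(q)$ for $n\in\mathscr X$. The elementary comparison $\norm{\beta}_\omega\ll\abs{\omega}\cdot\abs{\beta}$, which follows from $\abs{\Im\omega}\geq\sqrt{3}/2$ (\cref{lem:AlgebraicFacts}~\cref{enu:ImOmegaLowerBound}), converts this to $\norm{n\gamma}_\omega\ll\eta:=C\abs{\omega}\sqrt{x}/\Norm(q)$. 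Moreover, by \cref{lem:AlgebraicFacts}~\cref{enu:LatticeCount:UpperBound} the number of $n\in\mathscr X$ lying in any fixed residue class is $\ll 1+x/\Norm(q)$.

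The central estimate is then
\[
\Sigma := \sum_{\rho\in\integers/q\integers}\min\braces*{M,\, 1/\norm{\rho a/q}_\omega} \ll M + \Norm(q\omega)\log M.
\]
Since $\gcd(a,q)=1$, multiplication by $a$ permutes $\integers/q\integers$, so it suffices to bound $\sum_{\sigma}\min(M,\, 1/\norm{\sigma/q}_\omega)$. For $\sigma\not\equiv 0\pmod q$ and any $\rho\in\integers$, the element $\sigma-q\rho$ lies in $\integers\setminus\braces{0}$, so $\abs{\sigma-q\rho}\geq 1$; this gives $\abs{\sigma/q\bmod \integers}\geq 1/\abs{q}$ and, combined with $\abs{\beta}\leq 2\abs{\omega}\norm{\beta}_\omega$ on fundamental-domain representatives, the lower bound $\norm{\sigma/q}_\omega \gg 1/(\abs{\omega}\abs{q})$. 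A complementary upper bound for $N(\epsilon):=\#\braces{\sigma\in\integers/q\integers : \norm{\sigma/q}_\omega<\epsilon}$ is obtained by embedding the box $\braces{\norm{\cdot}_\omega<\epsilon}$ into a disk of radius $O(\epsilon\abs{\omega})$ in $\CC$ and applying \cref{lem:AlgebraicFacts}~\cref{enu:LatticeCount:UpperBound}, yielding $N(\epsilon)\ll 1 + \epsilon^2\Norm(q\omega)$. A layer-cake integration $\Sigma = \int_0^M N(1/t)\,dt$ then produces the stated estimate.

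For \cref{eq:ExpSumAverageBound:1}, partition the residues into \emph{good} ones ($\norm{\rho a/q}_\omega > 2\eta$) and \emph{bad} ones ($\norm{\rho a/q}_\omega \leq 2\eta$). The triangle inequality on good classes yields $\norm{\rho a/q+n\gamma}_\omega \geq \tfrac12\norm{\rho a/q}_\omega$, so their total contribution to $S$ is $\ll (1+x/\Norm(q))\,\Sigma$. The bad residues number at most $N(2\eta) \ll 1 + C^2\Norm(\omega^2)\,x/\Norm(q)$ (by the same lattice argument), and each bad class contributes $\leq (1+x/\Norm(q))M$. Combining these two contributions and absorbing common factors produces the asserted form $(1+C^2\Norm(\omega^2)x/\Norm(q))(M+\Norm(q\omega)\log M)$.

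For \cref{eq:ExpSumAverageBound:2}, the stronger hypothesis $x\leq\Norm(q)/(12C\Norm(\omega))^2$ is calibrated so that $\eta<1/(4\abs{\omega}\abs{q})$, strictly less than half the minimum non-zero value of $\norm{\sigma/q}_\omega$; there are thus no bad residues except possibly $\rho\equiv 0$, which is excluded since $\mathscr X\subseteq\integers\setminus\braces{0}$ and the same hypothesis enforces $\abs{n_1-n_2}<\abs{q}$ on $\mathscr X$ (so distinct elements occupy distinct residue classes). Consequently $\norm{n\alpha}_\omega\gg 1/(\abs{\omega}\abs{q})$ uniformly for $n\in\mathscr X$, so that $E(n,M)\ll\abs{\omega}\abs{q}$ independently of $M$, and $S$ collapses to a clean bound dominated by $\Sigma$ without the $M$-contribution from $\rho=0$. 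The main obstacle throughout is the coupling between $\rho a/q$ and the error $n\gamma$: when they are of comparable size, the triangle inequality is too crude and one must enumerate exceptional residues directly via $N(2\eta)$; the explicit constant $(12C\Norm(\omega))^2$ in the hypothesis of \cref{eq:ExpSumAverageBound:2} is tailored precisely to eliminate this scenario.
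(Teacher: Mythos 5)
Your strategy — partitioning $\mathscr{X}$ by residues modulo $q$ and then estimating the sum $\Sigma = \sum_\rho\min\braces{M, 1/\norm{\rho a/q}_\omega}$ — is genuinely different from the paper's, which instead covers $\mathscr{X}$ by $O(1 + C^2\Norm(\omega^2)x/\Norm(q))$ squares $\mathscr{R}$ of diameter $\abs{q}/(12C\abs{\omega}^2)$, establishes the spacing bound~\cref{eq:CountingBound} within each square via \cref{lem:FractionalPartBounds:Pertubed}, and sums the dyadic decomposition~$S(\mathscr{R})$ box by box. The residue-class decomposition is a classical alternative route, and several of your intermediate steps (the lower bound on $\norm{\sigma/q}_\omega$, the lattice-point count $N(\epsilon)\ll 1+\epsilon^2\Norm(q\omega)$, the layer-cake integral for $\Sigma$) are essentially correct modulo a split of the $t$-integral near $t=0$.

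However, there is a genuine gap in your treatment of the bad residues for~\cref{eq:ExpSumAverageBound:1}. You bound their total contribution by (number of bad residues) $\times$ (elements of $\mathscr{X}$ per residue class) $\times M$, that is
\[
	\parentheses{1 + C^2\Norm(\omega^2)\,x/\Norm(q)}\parentheses{1 + x/\Norm(q)}M,
\]
which carries an extra factor $(1+x/\Norm(q))$ that the right-hand side of~\cref{eq:ExpSumAverageBound:1} does not. Even after the (tacit, but necessary) normalisation $C\gg_{\omega} 1$, this gives at best $\parentheses{1+C^2\Norm(\omega^2)x/\Norm(q)}^2M$, which is quadratically worse than the claimed $\parentheses{1+C^2\Norm(\omega^2)x/\Norm(q)}M$; there is no further absorption available, since $M$ is an unconstrained parameter. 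The source of the overcount is conceptual: a residue class $\rho$ being \emph{bad} (i.e.\ $\norm{\rho a/q}_\omega$ small) does not imply that all $n\equiv\rho$ in $\mathscr{X}$ have $\norm{n\alpha}_\omega$ small, because the perturbation $n\gamma$ sweeps across the residue class and detunes most $n$. By enumerating bad classes and then crudely counting all $n$ in them, you lose exactly this cancellation. The paper's geometric covering encodes it automatically: within each square of diameter $\leq\abs{q}/(12C\abs{\omega}^2)$ \emph{any} two distinct $n,\tilde{n}$ yield $q\nmid(n-\tilde n)a$, so \cref{lem:FractionalPartBounds:Pertubed} applies to $n-\tilde{n}$ and forces the points $n\alpha$ to repel each other with gap $\geq 1/(4\abs{q\omega})$, regardless of whether $n,\tilde n$ lie in the same or different residue classes. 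Consequently each box contributes only $O(1)$ terms to the ``$\norm{n\alpha}_\omega$ tiny'' range, and the $M$-part enters linearly in the number of boxes. To repair your approach you would need to control, for each bad $\rho$, how many $n\equiv\rho$ in $\mathscr{X}$ actually have small $\norm{\rho a/q + n\gamma}_\omega$ — which is essentially a re-derivation of the spacing argument inside a residue class, not a simple multiplicity bound. Two lesser issues: the triangle-inequality step and the threshold $2\eta$ need the explicit constant implicit in $\abs{\ReO{\omega}(n\gamma)},\abs{\ImO{\omega}(n\gamma)}\ll\eta$ to be tracked; and in~\cref{eq:ExpSumAverageBound:2} the exclusion of $\rho\equiv 0$ should come from $\abs{n}<\sqrt{x}<\abs{q}$ (under $x\leq\Norm(q)/(12C\Norm(\omega))^2$), not from ``distinct elements occupy distinct residue classes''.
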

The proof of this result follows the outline given above and is undertaken in the next two subsections.
Similarly, we also obtain the following result:
\begin{thm}
	\label{thm:Diophantine}
	Suppose that one has coprime $a,q\in\integers$ satisfying~\cref{eq:ThetaApprox}. For $0<\Delta\leq\tfrac{1}{2}$ let
	\[
		H_{\alpha}(x,\Delta)
		= \#\braces{ n\in\integers : 0<\Norm(n)\leq x, \, \norm{n\alpha}_\omega\leq\Delta }.
	\]
	Then
	\[
		H_{\alpha}(x,\Delta) \ll_{C,\omega} \parentheses{ 1+x\abs{q}^{-2} } \parentheses{ 1+\Delta^2\abs{q}^2 }.
	\]
	Moreover, $H_{\alpha}(x,\Delta)$ vanishes if $\Delta < 1/(4\abs{q\omega})$ and $x\leq\abs{q}^2/(12C\abs{\omega}^2)^2$.
\end{thm}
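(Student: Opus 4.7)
The plan is to follow the \enquote{approximate and count} strategy outlined at the start of \cref{subsec:DistribFracBounds}: replace $\alpha$ by its rational approximation $a/q$, partition the range of $n$ into translates of a fundamental domain for the sublattice $q\integers\subseteq\integers$, and inside each translate count lattice points in a box using the bijection $\integers/q\integers \to (1/q)\integers/\integers$, $n \mapsto na/q$, which is well-defined and injective because $\integers$ is a UFD and $\gcd(a,q)=1$.

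For the upper bound on $H_{\alpha}(x,\Delta)$, I would cover the disc $\{\abs{n}^{2}\leq x\}$ by translates of a fundamental parallelogram $F$ of $q\integers$. Since $F$ has Euclidean area $\Norm(q)\abs{\Im\omega}$, only $\ll 1+x/\Norm(q)$ translates are needed. On a single translate $F+n_{0}$ the diameter is $\ll_{\omega} \abs{q}$, so $n\gamma$ varies by at most $\ll_{\omega} C/\abs{q}$ over it. The triangle inequality for $\norm{\cdot}_{\omega}$ then shows that any solution $n=n_{0}+\tilde n$ to $\norm{n\alpha}_{\omega}\leq\Delta$ satisfies $\norm{na/q+\eta_{0}}_{\omega}\leq\Delta+O_{\omega}(C/\abs{q})$ for a fixed shift $\eta_{0}$ depending only on the translate. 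Since $n\mapsto na/q$ bijects the translate with $(1/q)\integers/\integers$, the number of solutions per translate is bounded by the number of points of the $\Norm(q)$-element subgroup $(1/q)\integers/\integers \subseteq \CC/\integers$ lying in a $\norm{\cdot}_{\omega}$-box of radius $\Delta+O_{\omega}(C/\abs{q})$. A standard lattice-point count yields $\ll 1+\Norm(q)(\Delta+O_{\omega}(C/\abs{q}))^{2}\ll_{C,\omega} 1+\Delta^{2}\abs{q}^{2}$; multiplying by the number of translates then produces the claimed bound.

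For the vanishing statement I would argue by contradiction. Given a putative solution $n\neq 0$, write $n\alpha = m+\eta$ with $m\in\integers$ and $\abs{\eta}\leq(1+\abs{\omega})\Delta$. Then $na-mq=q(\eta-n\gamma)$ lies in $\integers$. If this element is zero, coprimality of $a$ and $q$ together with unique factorisation force $q\mid n$, so $\Norm(n)\geq\Norm(q)$, contradicting the hypothesised upper bound on $x$. Otherwise $\abs{na-mq}\geq 1$, which gives
\[
1\leq (1+\abs{\omega})\abs{q}\Delta + C\abs{n}/\abs{q}.
\]
The assumption $\Delta<1/(4\abs{q\omega})$ together with the lower bound $\abs{\Im\omega}\geq\sqrt{3}/2$ from \cref{lem:AlgebraicFacts} makes the first summand bounded away from $1$, forcing $\abs{n}\gg_{C}\abs{q}$, i.e.\ $\Norm(n)\gg \abs{q}^{2}/C^{2}$; this contradicts $x\leq \abs{q}^{2}/(12 C \abs{\omega}^{2})^{2}$ once one checks that the numerical constants cooperate (here $\abs{\omega}^{4}\geq 9/16$ comfortably suffices).

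The argument is routine and closely parallels the classical proof of such Diophantine counts over $\ZZ$; the main obstacle is not conceptual but bookkeeping---tracking the $\omega$-dependence carefully enough that the final bound depends only on $C$ and $\omega$, and verifying that the explicit thresholds $1/(4\abs{q\omega})$ and $\abs{q}^{2}/(12C\abs{\omega}^{2})^{2}$ in the vanishing clause are indeed compatible with the lower bound on $\abs{\Im\omega}$. Neither step should require tools beyond those already assembled in \cref{lem:AlgebraicFacts}.
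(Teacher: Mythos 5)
Your proposal is correct in substance and reaches the stated bound, but it takes a noticeably different route from the paper's proof.  The paper covers $\mathscr{X}$ with squares of diameter $\abs{q}/(12C\abs{\omega}^2)$ and invokes the spacing property of \cref{lem:FractionalPartBounds:Pertubed} packaged in~\cref{eq:CountingBound}: within each square, the points $n\alpha$ are $\frac{1}{4\abs{q\omega}}$-separated in the $\norm{\cdot}_\omega$-metric, so the number falling into a $\Delta\times\Delta$ box is $\ll(1+\abs{q\omega}\Delta)^2$; multiplying by the $\ll 1+C^2\abs{\omega}^4 x\abs{q}^{-2}$ squares gives the bound.  You instead tile the disc by fundamental parallelograms of the sublattice $q\integers$ (so by $\ll 1+x/\Norm(q)$ pieces), use the explicit bijection $\integers/q\integers\to(1/q)\integers/\integers$, $n\mapsto na/q$, and reduce the count per piece to a lattice-point count for $(1/q)\integers$ after absorbing the $\gamma$-perturbation (which is $\ll_\omega C/\abs{q}$ over each piece) into an enlarged box.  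The two arguments express the same underlying fact---that mod $q$ the points $na/q$ form a $1/\abs{q}$-spaced lattice in $\CC/\integers$---but yours makes the algebraic structure explicit rather than routing it through the analytic spacing lemma, and you are not relying on \cref{lem:FractionalPartBounds:Pertubed} for the counting bound.  This is a legitimate, arguably cleaner, alternative.

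For the vanishing clause, your case split on whether $na-mq$ vanishes is fine and essentially re-proves \cref{lem:FractionalPartBounds:Pertubed} from scratch, rather than citing it as the paper does.  One small point worth flagging: in the case $na-mq=0$ you conclude $q\mid n$ and write ``contradicting the hypothesised upper bound on $x$,'' but this step requires $\abs{q}^2/(12C\abs{\omega}^2)^2<\Norm(q)$, i.e.\ $12C\abs{\omega}^2>1$; since $\abs{\omega}^2=\Norm(\omega)\ge 1$, this holds once $C>1/12$ but not for arbitrary $C>0$.  The paper's own deduction from \cref{lem:FractionalPartBounds:Pertubed} has the identical hidden requirement (the lemma hypothesises $q\nmid na$), and in all applications one may harmlessly replace $C$ by $\max\braces{C,1}$, so this is a cosmetic gap in the statement of \cref{thm:Diophantine} itself rather than a defect of your argument.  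Your numerical verification in the complementary case ($na-mq\ne 0$) is correct.
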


% --------------------------------------------------------
\subsection{Diophantine lemmas}
\label{sec:DiophantineLemmas}

As a first step, we show that
\[
	\norm{\alpha}_\omega = \max\braces{ \norm{ \ReO{\omega}\alpha}, \norm{ \ImO{\omega}\alpha} }
\]
cannot be too small if $\alpha\in\numberfieldK$ is not an algebraic integer.
%At this point the reader may wish to reflect on \cref{rem:ExpSumTilt} once more by convincing him- or herself that what follows would not work if $\ReO{\omega}\alpha$ in~\cref{eq:Max:ReO:ImO} were to be replaced by $\ImO{\omega}(\omega\alpha)$.
The results in this section are probably already known in one form or another.
However, we were unable to find a suitable reference and, therefore, provide full proofs for the reader's convenience.
\begin{lem}
	\label{lem:FractionalPartBounds}
	For non-zero $a,q\in\integers$ such that $\alpha = a/q\notin\integers$, it holds that
	$\norm{\alpha}_\omega \geq 1/(2\abs{q\omega})$.
%	\[
%		\max\braces{ \norm{\ReO{\omega}\alpha}, \norm{\ImO{\omega}\alpha} }
%		\geq \frac{1}{2\abs{q\omega}}.
%	\]
\end{lem}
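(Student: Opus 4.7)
The plan is to argue by contradiction. Assume that $\norm{\alpha}_\omega < 1/(2\abs{q\omega})$. Then, choosing rational integers $r_1, r_2$ minimising the distances to $\ReO{\omega}\alpha$ and $\ImO{\omega}\alpha$ respectively, one has $\abs{\ReO{\omega}\alpha - r_1} < 1/(2\abs{q\omega})$ and $\abs{\ImO{\omega}\alpha - r_2} < 1/(2\abs{q\omega})$. Set $\beta = r_1 + r_2\omega$, which is an element of $\integers$ by construction.

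Next, I would use the defining decomposition $\alpha - \beta = (\ReO{\omega}\alpha - r_1) + (\ImO{\omega}\alpha - r_2)\omega$ and the triangle inequality to bound
\[
	\abs{\alpha - \beta} \leq \abs{\ReO{\omega}\alpha - r_1} + \abs{\omega}\abs{\ImO{\omega}\alpha - r_2} < \frac{1+\abs{\omega}}{2\abs{q\omega}}.
\]
Multiplying by $\abs{q}$ and using $a = q\alpha \in \integers$ (so that $q(\alpha-\beta) = a - q\beta \in \integers$), this yields
\[
	\abs{q(\alpha-\beta)} < \frac{1+\abs{\omega}}{2\abs{\omega}}.
\]

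The key observation closing the argument is that $\abs{\omega} \geq 1$. Indeed, $\omega \in \integers\setminus\braces{0}$ has $\Norm(\omega) = \abs{\omega}^2 \geq 1$ (as $\integers/\omega\integers$ has at least one element). Consequently, the right-hand side $(1+\abs{\omega})/(2\abs{\omega}) = \tfrac{1}{2} + 1/(2\abs{\omega})$ does not exceed $1$, so $\abs{q(\alpha-\beta)} < 1$. Since $q(\alpha-\beta)$ is an element of $\integers$ and every non-zero element of $\integers$ has absolute value at least $1$ (its norm being a positive rational integer), we must have $q(\alpha-\beta) = 0$. But $q\neq 0$ forces $\alpha = \beta \in \integers$, contradicting the hypothesis $\alpha \notin \integers$, and the claimed lower bound follows.

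There is no real obstacle; the main point to be careful about is the exact book-keeping to ensure the constant $1/(2\abs{q\omega})$ emerges, and that the inequality $\abs{\omega} \geq 1$ (valid because $\omega$ is a non-zero algebraic integer) is what makes the bound tight enough to reach a contradiction.
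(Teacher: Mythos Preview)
Your proof is correct and follows essentially the same route as the paper: both pick the nearest element $\beta\in\integers$ (the paper calls it $m$), bound $\abs{\alpha-\beta}$ from above via the triangle inequality applied to the $\omega$-coordinates, and bound it from below using that $q(\alpha-\beta)=a-q\beta$ is a non-zero element of $\integers$. The only cosmetic difference is that the paper argues directly (obtaining $2\abs{\omega}\,\norm{\alpha}_\omega\geq\abs{\alpha-\beta}\geq\abs{q}^{-1}$), whereas you phrase it as a contradiction and make the use of $\abs{\omega}\geq 1$ explicit where the paper absorbs it into the constant $2\abs{\omega}$.
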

\begin{proof}
	Pick $m = m_1+m_2\omega\in\integers$ such that
	\[
		\norm{\ReO{\omega}\alpha} = \abs{\ReO{\omega}\alpha-m_1}
		\quad\text{and}\quad
		\norm{\ImO{\omega}\alpha} = \abs{\ImO{\omega}\alpha-m_2}.
	\]
	Now certainly it holds that
	\begin{align*}
		\abs{\alpha-m} &
		= \abs{\parentheses{\ReO{\omega}\alpha-m_1 } + \parentheses{\ImO{\omega}\alpha-m_2}\omega}  \\ &
		\leq 2\abs{\omega} \max\braces{
			\abs{\ReO{\omega}\alpha-m_1},
			\abs{\ImO{\omega}\alpha-m_2}
		} \\ &
		= 2 \abs{\omega} \max\braces{
			\norm{\ReO{\omega}\alpha},
			\norm{\ImO{\omega}\alpha}
		}.
	\end{align*}
	Therefore, to prove the lemma, it suffices to give a suitable lower bound for $\abs{\alpha-m}$, which, upon noting that $\alpha\notin\integers$, is quite easy:
	\[
		\abs{\alpha-m}
		= \abs{q}^{-1}\abs{a-qm}
		\geq \abs{q}^{-1} \min_{0\neq r\in\integers} \abs{r}
		= \abs{q}^{-1}.
		\qedhere
	\]
\end{proof}
Next, we intend to derive a result similar to \cref{lem:FractionalPartBounds}, when $\alpha$ is slightly perturbed:
\begin{lem}
	\label{lem:FractionalPartBounds:Pertubed}
	Let $\alpha$ be a complex number and $a,q$ be such that~\cref{eq:ThetaApprox} holds. Furthermore, suppose that $n\in\integers$
	satisfies $\abs{n} \leq \abs{q} / (12C\abs{\omega}^2)$
%	\[
%		\abs{n} \leq \frac{\abs{q}}{12C\abs{\omega}^2}
%	\]
	and $na$ be indivisible by $q$. Then
	$\norm{n\alpha}_\omega \geq 1/(4\abs{q\omega})$.
%	\[
%		%\max\braces{ \norm{\ReO{\omega}(n\alpha)}, \norm{\ImO{\omega}(n\alpha)} }
%		\geq \geq 1/(4\abs{q\omega}).
%	\]
\end{lem}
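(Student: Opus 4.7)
The plan is to reduce to \cref{lem:FractionalPartBounds} applied to the rational approximation $na/q$, and then to absorb the perturbation $n\gamma$ (where $\gamma = \alpha - a/q$) by a triangle inequality. First I would decompose $n\alpha = na/q + n\gamma$. The divisibility hypothesis $q \nmid na$ ensures $na/q \notin \integers$, so \cref{lem:FractionalPartBounds} (which imposes no coprimality condition on its hypotheses) yields
\[
	\norm{na/q}_\omega \geq \frac{1}{2\abs{q\omega}}.
\]

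The main technical step is to establish the complementary bound $\norm{n\gamma}_\omega \leq 1/(4\abs{q\omega})$. Writing $n\gamma = \ReO{\omega}(n\gamma) + \ImO{\omega}(n\gamma)\omega$ and separating into real and imaginary parts gives the explicit formulae $\ImO{\omega}(n\gamma) = \Im(n\gamma)/\Im\omega$ and $\ReO{\omega}(n\gamma) = \Re(n\gamma) - \ImO{\omega}(n\gamma)\Re\omega$. Combined with $\abs{\Im\omega}\geq\sqrt{3}/2$ from \cref{lem:AlgebraicFacts}~\cref{enu:ImOmegaLowerBound} and the trivial bound $\abs{\Re\omega}\leq\abs{\omega}$, this yields
\[
	\max\braces*{\abs{\ReO{\omega}(n\gamma)},\,\abs{\ImO{\omega}(n\gamma)}}
	\leq \parentheses[\big]{1 + \tfrac{2}{\sqrt{3}}\abs{\omega}}\abs{n\gamma}.
\]
The assumptions on $\abs{n}$ and $\abs{\gamma}$ combine to give $\abs{n\gamma} \leq 1/(12\abs{\omega}^{2}\abs{q})$, so the bound above reduces to checking the elementary inequality $3\abs{\omega} \geq 1 + 2\abs{\omega}/\sqrt{3}$, which holds thanks to $\abs{\omega} \geq \abs{\Im\omega} \geq \sqrt{3}/2$. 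Since the resulting quantity $1/(4\abs{q\omega})$ is comfortably less than $1/2$, these coordinate absolute values coincide with the corresponding distances to the nearest integer, giving $\norm{n\gamma}_\omega \leq 1/(4\abs{q\omega})$ as required.

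Finally, the triangle inequality $\norm{\rho_1+\rho_2}_\omega \geq \norm{\rho_1}_\omega - \norm{\rho_2}_\omega$—which follows at once from the $\ZZ$-linearity of the coordinate projections $\ReO{\omega}$ and $\ImO{\omega}$ together with the standard subadditivity of distance-to-nearest-integer on $\RR$—combines the two previous bounds into
\[
	\norm{n\alpha}_\omega \geq \norm{na/q}_\omega - \norm{n\gamma}_\omega
	\geq \frac{1}{2\abs{q\omega}} - \frac{1}{4\abs{q\omega}}
	= \frac{1}{4\abs{q\omega}}.
\]
The only mildly delicate aspect is verifying that the numerical constant $12$ in the hypothesis on $\abs{n}$ is large enough; this is precisely what the inequality $3\abs{\omega} \geq 1 + 2\abs{\omega}/\sqrt{3}$ above expresses, and everything else is essentially bookkeeping between \cref{lem:FractionalPartBounds} and the basis-change estimates from \cref{lem:AlgebraicFacts}.
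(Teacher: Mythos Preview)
Your proof is correct and follows essentially the same approach as the paper: both reduce to \cref{lem:FractionalPartBounds} for the rational part $na/q$, bound the $\omega$-coordinates of the perturbation $n\gamma$ via $\abs{\Im\omega}\geq\sqrt{3}/2$ from \cref{lem:AlgebraicFacts}, and combine with a triangle inequality. The only cosmetic difference is that the paper bounds $\abs{\ReO{\omega}(n\gamma)}$ and $\abs{\ImO{\omega}(n\gamma)}$ separately and then takes the cruder common upper bound $3\abs{\omega}N$ (with $N=C\abs{n}/\abs{q}^2$), whereas you package both into a single $\max$ estimate and check the same numerical inequality $3\abs{\omega}\geq 1+2\abs{\omega}/\sqrt{3}$ at the end.
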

\begin{proof}
	First, we separate the perturbation $\gamma$ from the rest: we have
	\begin{align*}
		\norm{\ImO{\omega}(n\alpha)} &
		= \min_{k\in\ZZ} \abs{ \ImO{\omega}(na/q)-k+\ImO{\omega}\parentheses{n\gamma} } \\ &
		\geq \min_{k\in\ZZ} \abs{ \ImO{\omega}(na/q)-k } - \abs{\ImO{\omega}(n\gamma)} \\ &
		= \norm{\ImO{\omega}(na/q)} - \abs{\ImO{\omega}(n\gamma)},
	\end{align*}
	and the same holds when one replaces $\ImO{\omega}$ by $\ReO{\omega}$.
	The last term therein is bounded easily: using $\abs{\xi} \geq \abs{\ImO{\omega}\xi} \cdot \abs{\Im\omega}$, \cref{lem:AlgebraicFacts}~\cref{enu:ImOmegaLowerBound} and writing $N = C\abs{n}/\abs{q}^2$ for the moment, we have
	\[
		\abs{ \ImO{\omega}\parentheses{n\gamma} }
		\leq \max_{\substack{ \xi\in\CC \\ \abs{\xi}\leq N }} \abs{\ImO{\omega}\xi}
		\leq \frac{N}{\abs{\Im\omega}}
		\leq \frac{2N}{\sqrt{3}}.
	\]
	A similar calculation also bounds the corresponding $\ReO{\omega}$-term:
	\begin{align*}
		\abs{\ReO{\omega}\parentheses{n\gamma}} &
		\leq \max\braces[\big]{ \abs{\rho_1} :
			\rho_1,\rho_2\in\RR,\, (\rho_1 +\rho_2\Re\omega)^2 +(\rho_2\Im\omega)^2 \leq N^2
		} \\ &
		\leq \max\braces[\big]{
			\abs{\theta}+\abs{\rho_2\Re\omega} :
			\theta,\rho_2\in\RR, \,
			\theta^2 +(\rho_2\Im\omega)^2 \leq N^2
		} \\ &
		\leq \parentheses{1+\abs{\Re\omega/\Im\omega}} N \\ &
		\leq \parentheses{ 1+2/\sqrt{3} } \abs{\omega} N.
	\end{align*}
	Thus, using \cref{lem:FractionalPartBounds},
	\begin{align*}
		\norm{n\alpha}_\omega &
		= \max\braces{
			\norm{\ReO{\omega}\parentheses{n\alpha}},
			\norm{\ImO{\omega}\parentheses{n\alpha}}
		} \\ &
		\geq\max\braces{
			\norm{ \ReO{\omega}\parentheses{na/q}},
			\norm{ \ImO{\omega}\parentheses{na/q}}
		} - 3 \abs{\omega} N \\ &
		\geq \frac{1}{2\abs{q\omega}} - 3 \abs{\omega} N %\\ &
		= \frac{1}{\abs{q}} \parentheses*{
			\frac{1}{2\abs{\omega}}
			- 3C \abs{\omega} \frac{\abs{n}}{\abs{q}}
		}.
	\end{align*}
	Now, by assumption, the term in the parentheses is $\geq(4\abs{\omega})^{-1}$, and the assertion of the lemma follows.
\end{proof}

% --------------------------------------------------------
\subsection{Proof of \texorpdfstring{\cref{thm:ExpSumBound}}{Theorem\autoref{thm:ExpSumBound}}} \label{ProofofTh4.4.}
Assume the hypotheses of \cref{thm:ExpSumBound} and let $n$ and $\tilde{n}$ be two distinct algebraic integers in $\integers$ which coincide modulo $q$. Then there is some non-zero $m\in\integers$ such that $n-\tilde{n}=mq$ and, hence, $\abs{n-\tilde{n}}=\abs{m}\abs{q}\geq\abs{q}$.
Assuming $a$ and $q$ to be coprime and $\abs{n-\tilde{n}}<\abs{q}$, we conclude that $(n-\tilde{n})a$ is divisible by $q$ if and only if $n=\tilde{n}$. Consequently, if $\mathscr{R}\subseteq\CC$ is some set with
\begin{equation}\label{eq:SquareDiameter}
	\diam\mathscr{R} \leq \frac{\abs{q}}{12C\abs{\omega}^2},
\end{equation}
then, according to \cref{lem:FractionalPartBounds:Pertubed}, any two distinct points $n\alpha$, $\tilde{n}\alpha$ ($n,\tilde{n} \in \mathscr{X}\cap\mathscr{R}$) satisfy the spacing condition
\[
	\max\braces{
		\norm{\ReO{\omega}\parentheses{(n-\tilde{n})\alpha}},
		\norm{\ImO{\omega}\parentheses{(n-\tilde{n})\alpha}}
	}
	\geq \frac{1}{4\abs{q\omega}}.
\]
Therefore, for $0<\Delta_1 ,\Delta_2\leq\frac{1}{2}$, the sum
\[
	\sum_{\substack{
		n\in\mathscr{X}\cap\mathscr{R} \\
		\norm{\ReO{\omega}(n\alpha)} \leq \Delta_1 \\
		\norm{\ImO{\omega}(n\alpha)} \leq \Delta_2
	}} \!\! 1
\]
is bounded by
\[
%	\sum_{\substack{
%		n\in\mathscr{X}\cap\mathscr{R} \\
%		\norm{\ReO{\omega}(n\alpha)} \leq \Delta_1 \\
%		\norm{\ImO{\omega}(n\alpha)} \leq \Delta_2
%	}} \!\! 1
%	\leq
	\sum_{\substack{
		n\in\mathscr{X}\cap\mathscr{R} \\
		\braces{\ReO{\omega}(n\alpha)} \leq \Delta_1 \\
		\braces{\ImO{\omega}(n\alpha)} \leq \Delta_2
	}} \!\! 1
	+ \sum_{\substack{
		n\in\mathscr{X}\cap\mathscr{R} \\
		\braces{\ReO{\omega}(n\alpha)} \geq 1-\Delta_1 \\
		\braces{\ImO{\omega}(n\alpha)} \leq \Delta_2
	}} \!\! 1
	+ \sum_{\substack{
		n\in\mathscr{X}\cap\mathscr{R} \\
		\braces{\ReO{\omega}(n\alpha)} \leq \Delta_1 \\
		\braces{\ImO{\omega}(n\alpha)} \geq 1-\Delta_2
	}} \!\! 1
	+ \sum_{\substack{
		n\in\mathscr{X}\cap\mathscr{R} \\
		\braces{\ReO{\omega}(n\alpha)} \geq 1-\Delta_1 \\
		\braces{\ImO{\omega}(n\alpha)} \geq 1-\Delta_2
	}} \!\! 1
\]
which in turn
is bounded by four times the maximum number of points of pairwise maximum norm distance $\geq(4\abs{\omega}\abs{q})^{-1}$ that can be put in a rectangle with side lengths $\Delta_1$ and $\Delta_2$, i.e.,
\begin{equation}\label{eq:CountingBound}
	\sum_{\substack{
		n\in\mathscr{X}\cap\mathscr{R} \\
		\norm{\ReO{\omega}(n\alpha)} \leq \Delta_1 \\
		\norm{\ImO{\omega}(n\alpha)} \leq \Delta_2
	}} \!\!\!\! 1
	\ll \parentheses{1+\abs{q\omega}\Delta_1 } \parentheses{1+\abs{q\omega}\Delta_2}.
\end{equation}
%\subfile{numberfieldapprox-fig-intdist-decomposition.tex}
%\figureIntDistDecomposition

Moving on, let $L\in\NN$ be a parameter at our disposal. Then the
sum
\[
	S(\mathscr{R}) = \sum_{n\in\mathscr{X}\cap\mathscr{R}} E(n,M)
\]
with $E(n,M)$ as defined in~\cref{eq:LinearSumBound} admits a decomposition
%(compare \cref{fig:IntDistDecomposition})
\begin{align*}
	S(\mathscr{R}) &
	\leq \sum_{\substack{
		n\in\mathscr{X}\cap\mathscr{R} \\
		\norm{\ReO{\omega}(n\alpha)} \leq 2^{-L} \\
		\norm{\ImO{\omega}(n\alpha)} \leq 2^{-L}
	}} \!\!\!\! M
	\begin{multlined}[t]
		+ \mathop{\sum\!\sum\!\sum}_{\substack{
			2\leq k_1,k_2 \leq L,\, n\in\mathscr{X}\cap\mathscr{R} \\ \\
			2^{-k_1} < \norm{\ReO{\omega}(n\alpha)} \leq 2^{1-k_1} \\
			2^{-k_2} < \norm{\ImO{\omega}(n\alpha)} \leq 2^{1-k_2}
		}} \!\!\!\! \min\braces{ 2^{k_1}, 2^{k_2} } +{} \\
		+ \sum_{2\leq k\leq L} \braces[\bigg]{
			\hspace{-1em}
			\sum_{\substack{
				n\in\mathscr{X}\cap\mathscr{R} \\
				2^{-k} < \norm{\ReO{\omega}(n\alpha)} \leq 2^{1-k} \\
				\norm{\ImO{\omega}(n\alpha)} \leq 2^{-L}
			}}
			\hspace{-1ex} + \hspace{-1ex}
			\sum_{\substack{
				n\in\mathscr{X}\cap\mathscr{R} \\
				\norm{\ReO{\omega}(n\alpha)} \leq 2^{-L} \\
				2^{-k} < \norm{\ImO{\omega}(n\alpha)} \leq 2^{1-k}
			}}
			\hspace{-1em}
		} 2^k \hfill
	\end{multlined} \\ &
	= S_1(\mathscr{R}) + S_2(\mathscr{R}) + S_3(\mathscr{R}),
	\quad\text{say}.
\end{align*}
By~\cref{eq:CountingBound} and using $(a+b)^2\leq 2a^2+2b^2$ ($a,b\geq 1$),
\[
	S_1(\mathscr{R})
	\ll \parentheses{ 1 + \abs{q\omega} 2^{-L} }^2 M
	\ll M + \abs{q\omega}^2 2^{-2L} M.
\]
Moreover, using
\(
	\min\braces{ 2^{k_1}, 2^{k_2} } \leq \sqrt{ 2^{k_1+k_2} }
\)
and~\cref{eq:CountingBound},
\[
	S_2(\mathscr{R})
	\ll \parentheses[\bigg]{
		\sum_{2\leq k\leq L} 2^{k/2} \parentheses{ 1 + \abs{q\omega} 2^{-k} }
	}^2
	\ll 2^{L} + \abs{q\omega}^2.
\]
Similarly,
\[
	S_3(\mathscr{R})
	\ll \sum_{2\leq k\leq L} 2^k \parentheses{1+\abs{q\omega}2^{-k} } \parentheses{1+\abs{q\omega}2^{-L}}
	\ll 2^L + \abs{q\omega}^2 L.
\]

Assuming $M \geq 2$, we take $L = \ceil{\frac{1}{2\log2} \log(2M)}$ to obtain
\begin{equation}\label{eq:S:R:Bound:1}
	S(\mathscr{R})
	\ll M + \abs{q\omega}^2 \log(2M)
	\ll M + \Norm(q\omega) \log M.
\end{equation}
Additionally, if
\(
	x \leq \abs{q}^2 /\parentheses{12C\abs{\omega}^2 }^2
\),
then we take $L = \ceil{  \frac{1}{\log2}\log\parentheses{4\abs{q\omega}} }$. In this case \cref{lem:FractionalPartBounds:Pertubed} shows that $S_1(\mathscr{R})$ vanishes and, consequently, we have
\begin{equation}\label{eq:S:R:Bound:2}
	S(\mathscr{R})
	\ll \abs{q\omega}^2 \log\parentheses{2\abs{q\omega}}
	\ll \Norm(q\omega) \log \Norm(q\omega).
\end{equation}
Finally, we note that the set $\mathscr{X}$ can be covered by fewer than
\begin{equation}\label{eq:NumberOfRects:Bound}
	\parentheses[\bigg]{ 1 + \frac{2\sqrt{x}}{\braces{\text{diameter~bound}}/\sqrt{2}} }^2
	\ll 1 + C^2 \abs{\omega}^4 x \abs{q}^{-2}
\end{equation}
squares $\mathscr{R}$ with diameter~\cref{eq:SquareDiameter}.
Together with~\cref{eq:S:R:Bound:1} this proves~\cref{eq:ExpSumAverageBound:1},
and together with~\cref{eq:S:R:Bound:2} we obtain~\cref{eq:ExpSumAverageBound:2}.
This proves \cref{thm:ExpSumBound}.

\begin{proof}[Proof of \cref{thm:Diophantine}]
	The assertion concerning the vanishing of $H_\alpha(x,\Delta)$ is contained in \cref{lem:FractionalPartBounds:Pertubed}. As for the bound for $H_\alpha(x,\Delta)$, cover the set $\mathscr{X} = \braces{ n\in\integers : 0<\Norm(n) \leq x }$ with rectangles as above and employ~\cref{eq:CountingBound}.
\end{proof}

% --------------------------------------------------------
\section{The non-smoothed version}
\label{sec:HarmanInput}

Here we tackle the problem of verifying the assumptions of \cref{thm:HarmansSieve} in a setting suitable for proving \cref{thm:DiophApprox:Integers}. Throughout, we assume the hypotheses of \cref{thm:DiophApprox:Integers}, although $x\geq 3$ may be considered arbitrary until \cref{subsec:AssemblingTheParts}, where we take $x=\Norm(q)^{28/5}$.

% --------------------------------------------------------
\subsection{Setting up linear and bilinear forms}
\label{subsec:SettingUp}
%\begin{equation}
%\lim_{R\to\infty} \sum_{\substack{ r\in\integers \\ \mathclap{\Norm(r)<R} }} d_4(r\integers) \omega(r) \leq X
%\end{equation}

Let $\mathscr{B} = \braces{ n\in\integers : x/2 \leq \Norm(n) < x }$ and $\mathscr{A} = \braces{ n\in\mathscr{B} : \norm{ n\alpha }_\omega < \delta }$.
Concerning \cref{thm:HarmansSieve}, we choose $\tilde{w}$ to be $\boldsymbol{1}_{\mathscr{A}}$, the characteristic function of the set $\mathscr{A}$, and $w = 4\delta^2 \boldsymbol{1}_{\mathscr{B}}$.
Given these definitions, the limit in \eqref{eq:Convergence} is actually attained for every $R\geq x$ and trivial estimates suffice to show that $X$ therein may be taken $\ll x^5$.\footnote{%
	Of course, for such divisor sums much better estimates are available (see, e.g., \cref{lem:CombinedDivisorBound} below for $d_4$ replaced with $d_2$).
	However, since in \cref{thm:HarmansSieve}, only the logarithm of $X$ enters in the final error term, we can be very sloppy here.
}
Moving on, we shall want to compare sums of the type
\begin{equation}\label{eq:General:TypeI/II:Sum}
	\mathop{\sum\!\sum}_{mn\in\mathscr{A}}a_{m}b_{n}
	\quad\text{and}\quad
	4\delta^2 \mathop{\sum\!\sum}_{mn\in\mathscr{B}}a_{m}b_{n},
\end{equation}
where the summation indices $m,n$ vary through $\integers$ and the coefficient sequences $(a_{m})_{m}$ and $(b_{n})_{n}$ consist of complex numbers and satisfy $\abs{a_{n}}\leq 1$ and $\abs{b_{n}}\leq d(n\integers)$.\\
To be more specific, for parameters $\mu>0$ and $0<\kappa\leq\frac{1}{2}$,
there are two types of sums we would like to estimate
\begin{itemize}
	\item \emph{Type~I}: $b_{n}=1$ in the above and $(a_{m})_{m}$ is supported only on $m$ with $0<\Norm(m)<M$ for some $M$ with $x^{\mu}<M\leq x$ (see~\cref{eq:Harman:Type:I}).
	\item \emph{Type~II}: $(a_{m})_{m}$ is supported only on $m$ with $x^{\mu}\leq \Norm(m)<x^{\mu+\kappa}$
	(see~\cref{eq:Harman:Type:II}).
\end{itemize}
Each type requires a different treatment, but for now it is convenient
to start by transforming~\cref{eq:General:TypeI/II:Sum} without
restricting to either of the above types. We start with the following
result which furnishes a finite Fourier approximation to the \emph{saw-tooth function} $\psi$ given by
\[
	\psi(t) = t - \floor{t} - \tfrac{1}{2}
	\quad (t\in\RR).
\]
\begin{lem}
	\label{lem:Fourier}
	For all real $x$ and  $J\ge 1$, we have
	\[
		\psi(x) = \sum\limits_{1\le \abs{j}< J} (2\pi i j)^{-1} \e{-jx} + O\parentheses*{\min\braces*{ \log 2J, \frac{1}{J\norm{x}} }}.
	\]
\end{lem}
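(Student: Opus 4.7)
The plan is to establish the two bounds inside the $\min$ separately; the lemma then follows by taking the smaller. I would first pair positive and negative frequencies to rewrite the approximating sum as
\[
	\phi_J(x) := \sum_{1\leq \abs{j}< J} (2\pi i j)^{-1} \e{-jx}
	= -\frac{1}{\pi}\sum_{1\leq j<J} \frac{\sin(2\pi j x)}{j},
\]
a real, $1$-periodic trigonometric polynomial; it therefore suffices to consider $x\in[0,1)$.

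The $O(\log 2J)$ branch is immediate from the triangle inequality combined with $\abs{\psi(x)}\leq \tfrac{1}{2}$ and the harmonic bound $\abs{\phi_J(x)}\leq \pi^{-1}\sum_{1\leq j<J} j^{-1}\ll \log 2J$. This branch is what absorbs the behaviour near integers, where $\psi$ jumps and $(J\norm{x})^{-1}$ becomes meaningless.

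For the $O(1/(J\norm{x}))$ branch, I would invoke the classical pointwise Fourier expansion $\psi(x)=-\pi^{-1}\sum_{j\geq 1} j^{-1}\sin(2\pi j x)$, valid on $\RR\setminus\ZZ$ by the Dirichlet--Jordan criterion (since $\psi$ is of bounded variation). Subtracting the partial sum $\phi_J$ yields
\[
	\psi(x)-\phi_J(x) = -\frac{1}{\pi}\sum_{j\geq J}\frac{\sin(2\pi j x)}{j},
\]
and Abel summation, using the geometric-series estimate $\abs{\sum_{j=J}^{N}\sin(2\pi j x)}\leq \abs{2\sin(\pi x)}^{-1}\ll \norm{x}^{-1}$, delivers the required bound $\ll \norm{x}^{-1}\sum_{j\geq J}(j(j+1))^{-1}\ll (J\norm{x})^{-1}$.

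The main (mild) step requiring care is the justification of pointwise convergence of $\psi$'s Fourier series at non-integer points; this is classical, but could alternatively be circumvented by expressing $\phi_J(x)$ directly as a Dirichlet-kernel average of $\psi$ over a period and estimating the resulting oscillatory integral by integration by parts, at the cost of a slightly more intricate calculation.
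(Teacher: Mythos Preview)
Your argument is correct and is the standard proof of this classical approximation. The paper does not actually give a proof here: it simply cites the result as Lemma~4.1.2 in Br\"udern's \emph{Einf\"uhrung in die analytische Zahlentheorie}, so there is no ``paper's approach'' to compare against beyond noting that your outline (triangle inequality for the $\log 2J$ branch, pointwise Fourier expansion of $\psi$ plus Abel summation against the geometric-series bound $\abs{\sum_{j=J}^{N}\e{jx}}\ll\norm{x}^{-1}$ for the $1/(J\norm{x})$ branch) is exactly the kind of argument such references contain.
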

\begin{proof}
	This is Lemma~4.1.2 in~\cite{Brudern1995ANT}. 
\end{proof}

We now derive a useful expansion of the characteristic function $\boldsymbol{1}_{\mathscr{A}}$ of $\mathscr{A}$ evaluated at algebraic integers.
For an element $y$ of $\integers$ we put
\[
	x_{1,y} = \ReO{\omega}(y\alpha)
	\quad\text{and}\quad
	x_{2,y} = \ImO{\omega}(y\alpha).
\]
Furthermore, let
\[
	\integers_{\delta,\alpha} = \braces{ y\in\integers : x_{1,y} \in \pm\delta+\ZZ \text{ or } x_{2,y} \in \pm\delta+\ZZ }.
\]
We now consider the characteristic function $\boldsymbol{1}_{\mathscr{A}}\colon \integers\to\braces{0,1}$ of the set $\mathscr{A}$.
For any $y\in\mathscr{B}\setminus\integers_{\delta,\alpha}$ we have the expansion
\begin{equation}\label{eq:BeattyIndicatorFuncDecomp}
	\newcommand{\charExpansion}{\parentheses{
		\psi(- x_{k,y} - \delta)
		- \psi(- x_{k,y} + \delta)
	}}
	\begin{aligned}
		\boldsymbol{1}_{\mathscr{A}}(y) &
		= \prod_{k=1,2} \parentheses{ 2\delta + \charExpansion } \\ &
		\multlinegap=0pt
		= 4\delta^2 \begin{multlined}[t]
			+ 2\delta \sum_{k=1,2} \charExpansion +{} \\
			\shoveleft[0pt]{ + \prod_{k=1,2} \charExpansion \hfill }
		\end{multlined} \\ &
		= 4\delta^2 + 2\delta \sum_{k=1,2} \Xi_k(y) + \Xi_3(y),
		\quad\text{say}.
	\end{aligned}
\end{equation}
Note here that the first equality in~\cref{eq:BeattyIndicatorFuncDecomp} may not hold for $y\in\integers_{\delta,\alpha}$.
Nevertheless, the last line in~\cref{eq:BeattyIndicatorFuncDecomp} remains bounded even in that case.
Therefore one can, as we do below, also use the last line of~\cref{eq:BeattyIndicatorFuncDecomp} as a substitute for $\boldsymbol{1}_{\mathscr{A}}(y)$ even when $y\in\integers_{\delta,\alpha}$.
This only introduces an error bounded by a constant times for how many $y\in\integers_{\delta,\alpha}$ is this applied.

For $k=1,2,3$ we consider the sums
\[
	\varSigma_k = \MNsum a_mb_n \Xi_k(mn).
\]
For $k=1,2$, on applying \cref{lem:Fourier} with some $J\ge 1$
to be specified later (see~\cref{eq:ParameterChoices} below), for \emph{any} choice of summation ranges for $m,n$, we have
\begin{align*}%\label{eq:PsiTriangleIneq}
	\varSigma_k 
	= & 
	\MNsum a_mb_n \sum\limits_{1\le \abs{j}< J} (2\pi i j)^{-1}
	\parentheses{\e{-j\delta}-\e{j\delta}} \e{j(-x_{k,mn})} + O(G) \\
	\ll & \sum\limits_{1\le \abs{j}< J}\Pi(j) \cdot \abs[\bigg]{ \MNsum a_mb_n \e{ jx_{k,mn} } } + G,
\end{align*}
where
\begin{equation}\label{eq:Weights}
	\Pi(j) = \min\braces{ \abs{j}^{-1}, \delta }
\end{equation}
and 
\begin{equation} \label{Gdef}
	G \coloneqq \sum\limits_{k=1}^2 \sum\limits_{l=0}^1 \MNsum \abs{a_m b_n}\min\braces*{\log 2J, 
	\frac{1}{J\norm{(-1)^l\delta-x_{k,mn}}}}.
\end{equation}

Similarly, we obtain 
\[%\label{S3comb}
	\varSigma_3 \ll 
		\sum\limits_{\substack{1\le \abs{j_1}< J\\ 1\le \abs{j_2}< J}} 
		\Pi(j_1)\Pi(j_2)\cdot \abs[\bigg]{ \MNsum a_mb_n \e{-j_1x_{1,mn}-j_2x_{2,mn} } } +  (\log 2J)G,
\]
where we have used the trivial estimates
\[
	\sum\limits_{1\le \abs{j}< J} (2\pi ij)^{-1} \parentheses{\e{-j\delta}-\e{j\delta}} \e{j(-x_{k,mn})}\ll \log 2J
\]
and 
\[
	\min\braces*{\log 2J, \frac{1}{J\norm{(-1)^l\delta-x_{k,mn}}}} \ll \log 2J.
\]

Now consider
\begin{equation}\label{eq:BilinearFormError}
	\bilinError
	= \mrestrictedsum{mn\in\mathscr{A}} a_mb_n
	- 4\delta^2 \mrestrictedsum{mn\in\mathscr{B}} a_mb_n,
\end{equation}
where the star in the summation indicates that the range of $m$ is to be restricted to a Type~I or Type~II range.
The first sum may be written as
\[
	\mrestrictedsum{mn\in\mathscr{A}} a_mb_n = \mrestrictedsum{mn\in\mathscr{A}} a_mb_n \boldsymbol{1}_{\mathscr{A}}(mn)
\]
and we can apply~\cref{eq:BeattyIndicatorFuncDecomp} to all those terms where $mn\notin\integers_{\delta,\alpha}$.
On the other hand, we may use the last line of~\cref{eq:BeattyIndicatorFuncDecomp} as a substitute for $\boldsymbol{1}_{\mathscr{A}}(mn)$ for \emph{all} terms $mn$ in the above at the cost of an error $O(G)$ (see the comment just below~\cref{eq:BeattyIndicatorFuncDecomp}).
Then, combining this with our analysis of the sums $\varSigma_k$ from above, we find that
\begin{equation}\label{eq:BilinearFormError:Estimate}
	\multlinegap=0cm
	E \ll
	\begin{multlined}[t]
		(\log 2J)G + \delta \max_{k=1,2} \sum_{1\leq\abs{j}<J} \Pi(j) \abs[\bigg]{ \mrestrictedsum{mn\in\mathscr{B}} a_mb_n \e{-jx_{k,mn}} } +{} \\
		\shoveleft[.2cm]{ + \JJsum \Pi(j_1)\Pi(j_2) \abs[\bigg]{ \mrestrictedsum{mn\in\mathscr{B}} a_mb_n \e{ -j_1x_{1,mn} -j_2x_{2,mn} } }. }
	\end{multlined}
\end{equation}

% --------------------------------------------------------
\subsection{Removing the weights: dyadic intervals}

Here we shall remove the weights~\cref{eq:Weights} attached to
the sums in~\cref{eq:BilinearFormError:Estimate}. This
may be achieved by splitting the summation over $j$ (or $j_1 $,
$j_2$) into dyadic intervals: indeed, for any non-negative $f:\ZZ^2 \to\RR$,
letting
\begin{equation}\label{eq:F}
	F(J_1,J_2) = \mathop{\sum\!\sum}_{\substack{
		0\leq\abs{j_1}<J_1 \\
		0\leq\abs{j_2}<J_2 \\
		\mathclap{ (j_1,j_2) \neq (0,0) }
	}} f(j_1,j_2),
\end{equation}
we find that
\begin{gather*}
	\JJsum \Pi(j_1)\Pi(j_2) f(j_1,j_2)
	\ll (\log J)^2 \max_{\substack{1\leq J_1 \leq J \\ 1\leq J_2\leq J}} \Pi(j_1)\Pi(j_2) F(J_1,J_2), \\
	\sum_{1\leq\abs{j}<J} \Pi(j) f(j,0)
	\ll (\log J) \max_{1\leq J_1 \leq J} \Pi(J_1) F(J_1,1), \\
	\sum_{1\leq\abs{j}<J} \Pi(j) f(0,j)
	\ll (\log J) \max_{1\leq J_2 \leq J} \Pi(J_2) F(1,J_2).
\end{gather*}
Of course, we shall apply this with
\begin{equation}\label{eq:f}
	f(j_1,j_2)
	= \abs[\bigg]{ \mrestrictedsum{mn\in\mathscr{B}} a_mb_n \e{-j_1 x_{1,mn} -j_2x_{2,mn}} }.
\end{equation}

Now assume for the moment that we have bounds
\begin{equation}\label{eq:FJ1J2:Bound}
	F(J_1,J_2) \ll \mathcal{F}(J_1,J_2),
\end{equation}
where the right-hand side is symmetric in both arguments and does not depend on the particular choice of the coefficients in~\cref{eq:f} (but, of course, still subject to the Type~I/II conditions presented in \cref{subsec:SettingUp});
the reader may wish to glance at \cref{prop:TypeII:ExplicitBound} and \cref{prop:TypeI:ExplicitBound} below, 
where we furnish such bounds for the Type~II and Type~I sums respectively. 

%Then, using~\cref{eq:BilinearFormError:Estimate}, \cref{lem:CombinedDivisorBound}~\cref{enu:DivisorBound} and~\cref{eq:BilinearSumTrivBound}, we have
Then, using~\cref{eq:BilinearFormError:Estimate}  we have
\begin{equation}\label{eq:BilinearFormError:Estimate:Simplified}
	\frac{\abs{\bilinError}}{J^\epsilon}
	\ll_{\epsilon}
	\max_{1\leq J_1 \leq J} \delta\Pi(J_1) \mathcal{F}(J_1,1)
		+ \max_{\substack{ 1\leq J_1\leq J \\ 1\leq J_2\leq J }} \Pi(J_1)\Pi(J_2) \mathcal{F}(J_1,J_2) + G.
\end{equation}
We shall return to this in \cref{subsec:AssemblingTheParts} and now focus on establishing the aforementioned bounds of the shape \cref{eq:FJ1J2:Bound}.

% --------------------------------------------------------
\subsection{Transforming the argument in the exponential term}
\label{subsec:TransformingArg}
In the proof of the bounds for the Type~I and Type~II sums we need to combine variables in $\integers$ (see \cref{subsec:Bound:TypeII,subsec:Bound:TypeI} below).
Having this goal in mind, the shape of the argument of the exponential in~\cref{eq:f} appears to be, at a superficial glance, a technical obstruction.

However, this putative problem vanishes after a simple variable transformation that we shall now describe: by definition of $x_{k,mn}$,
\begin{equation}\label{eq:ExponentialArgument}
	- j_1 x_{1,mn} - j_2x_{2,mn}
	= - j_1 \ReO{\omega}(mn\alpha) - j_2\ImO{\omega}(mn\alpha).
\end{equation}
Letting $\xi_2$ be given as in~\cref{eq:OmegaSquared:XiDecomp} and writing $\ell = \ell_1 + \ell_2\omega$, a short computation yields
\begin{equation} \label{Imcomp}
	\ImO{\omega}(\ell\rho)
	= \ell_2\ReO{\omega}\rho + (\ell_1 +\ell_2\xi_2) \ImO{\omega}\rho
	\quad(\rho\in\CC).
\end{equation}
Then, via the equivalence
%\begin{equation}\label{eq:j:to:ell:Transformation}
\[
	\begin{pmatrix} 0 & -1 \\ -1 & -\xi_2 \end{pmatrix}
	\begin{pmatrix} \ell_1 \\ \ell_2 \end{pmatrix}
	= \begin{pmatrix} j_1 \\ j_2 \end{pmatrix}
	\Longleftrightarrow
	\begin{pmatrix} \ell_1 \\ \ell_2 \end{pmatrix}
	= \begin{pmatrix} \xi_2 & -1 \\ -1 & 0 \end{pmatrix}
	\begin{pmatrix} j_1 \\ j_2 \end{pmatrix} \!,
\]
%\end{equation}
and assuming $(j_1,j_2)\neq(0,0)$, we observe that~\cref{eq:ExponentialArgument} equals $\ImO{\omega}\parentheses{\ell mn\alpha}$ when $(\ell_1,\ell_2)$ is calculated via the above formula.
We let $\mathscr{L}(J_1,J_2)$ be the set of algebraic integers $\ell\in\integers$ arising from $(j_1,j_2)$ via the above formula, that is, $\mathscr{L}\parentheses{J_1,J_2}$ is the set
\[
	\braces{ (\xi_2j_1 -j_2) - j_1\omega :
		\abs{j_1 }<J_1,\,
		\abs{j_2}<J_2,\,
		(j_1 ,j_2)\neq(0,0)
	}.
\]
Consequently, if $F(J_1 ,J_2)$ is given by~\cref{eq:F} with $f$ given by~\cref{eq:f}, then
\begin{align}\label{eq:F:J1:J2}
	F(J_1,J_2)
	= \sum_{\ell\in\mathscr{L}(J_1 ,J_2)} \abs[\bigg]{ \mrestrictedsum{mn\in\mathscr{B}} a_mb_n \e{\ImO{\omega}\parentheses{ \ell mn\alpha }} }.
\end{align}

For a later extension of the summation over $\ell$, we note that, using \cref{lem:AlgebraicFacts}~\cref{enu:ImOmegaLowerBound}, $\mathscr{L}(J_1,J_2)$ can be seen to be contained in the set of all $\ell$ satisfying
\begin{align}\label{eq:ell:NormBound}
	1 \leq \Norm(\ell) < 5 \Norm(\omega^2) \parentheses{J_1+J_2}^2.
\end{align}
The reader will note that this set potentially contains many more elements than $\mathscr{L}\parentheses{J_1 ,J_2}$, for we obviously have
\begin{equation}\label{eq:LH1H2:CountBound}
	\#\mathscr{L}\parentheses{J_1,J_2}
	\leq \parentheses{2J_1+1} \parentheses{2J_2+1}
	\leq 9\, J_1 J_2;
\end{equation}
In any case, we require both~\cref{eq:ell:NormBound} and~\cref{eq:LH1H2:CountBound}.

% --------------------------------------------------------
\subsection{The Type~II sums}\label{subsec:Bound:TypeII}
In this section, we establish the following.
\begin{prop}[Type~II bound]
	\label{prop:TypeII:ExplicitBound}
	Consider $F$ from~\cref{eq:F}
	with $f$ given by~\cref{eq:f} subject to
	\[
		\mrestrictedsum{mn\in\mathscr{B}}
		= \mathop{\sum\!\sum}_{\substack{
			x/2\leq \Norm(mn)<x \\
			\mathclap{ x^{\mu} < \Norm(m) < x^{\mu+\kappa} }
		}
	},
	\]

	where $\mu\in(0,1]$, $\kappa\in(0,\frac{1}{2}]$ and $x\geq 3$. For the coefficients in~\cref{eq:f} assume that $\abs{a_m}\leq 1$ and $\abs{b_{n}}\leq d(n\integers)$.
	Moreover, suppose that $a$, $q$, $\gamma$ and $C$ are as in~\cref{eq:ThetaApprox}.
	Then, for any $\epsilon\in(0,\frac{1}{2}]$,
	\begin{equation}\label{eq:TypeII:ExplicitBound}
		F(J_1,J_2) \ll_{\epsilon} \!\begin{multlined}[t]
			C \Norm(\omega)^{7/2} x^{\epsilon}
			\bigl\lparen
			J_1J_2 x^{(1+\mu+\kappa)/2} +
			(J_1J_2)^{1/2+\epsilon} \times{} \\
			\shoveleft[.2cm]{
				\times \bigl\lparen
				(J_1+J_2) x \Norm(q)^{-1/2}
				+ (J_1+J_2) x^{1-\mu/4}
				+{}
			} \\
			+ \Norm(q)^{1/2} x^{(2+\mu+\kappa)/4}
				\bigr\rparen.
		\end{multlined}
	\end{equation}
\end{prop}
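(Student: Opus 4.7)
The proof is a classical exponential-sum argument using two applications of Cauchy--Schwarz and culminating in an appeal to \cref{thm:ExpSumBound}. Writing
\[
	T(\ell) = \mrestrictedsum{mn\in\mathscr{B}} a_m b_n \e{\ImO{\omega}(\ell mn\alpha)},
\]
so that $F(J_1,J_2) = \sum_{\ell\in\mathscr{L}(J_1,J_2)} \abs{T(\ell)}$, decompose the $(m,n)$-summation into $O((\log x)^2)$ dyadic blocks with $\Norm(m) \asymp \mathcal{M}$ where $\mathcal{M}\in(x^\mu,x^{\mu+\kappa})$, and $\Norm(n)\asymp \mathcal{N} \asymp x/\mathcal{M}$; fix such a block henceforth.

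For each $\ell$, apply Cauchy--Schwarz on the $m$-variable:
\[
	\abs{T(\ell)}^2 \leq \mathcal{M} \sum_m\abs[\bigg]{\sum_n b_n \mathbf{1}[\cdots]\, \e{\ImO{\omega}(\ell mn\alpha)}}^2.
\]
Expanding the inner square and setting $h' = n_1 - n_2$ yields a diagonal ($h'=0$) contribution of size $\ll\mathcal{M}\cdot x\cdot x^\epsilon$ (via the divisor-sum bound $\sum_n \abs{b_n}^2\ll\mathcal{N}\log^{O(1)}\mathcal{N}$) and an off-diagonal contribution which, via \cref{lem:LinearExpSumBound}, is
\[
	\ll \Norm(\omega)\,\mathcal{M}^{3/2}\sum_{n_1\neq n_2}\abs{b_{n_1}b_{n_2}}\, E\bigl(\ell(n_1-n_2),\sqrt{\mathcal{M}}\bigr),
\]
with $E(\cdot,\cdot)$ given by~\eqref{eq:LinearSumBound}. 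Using $\sqrt{a+b}\leq\sqrt{a}+\sqrt{b}$ one has
\[
	\abs{T(\ell)} \leq \sqrt{\mathcal{M}\cdot\mathrm{diag}_\ell} + \sqrt{\mathcal{M}\cdot\abs{\mathrm{off}_\ell}},
\]
with $\mathrm{diag}_\ell$ independent of $\ell$.

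For the diagonal part, the triangle inequality over $\ell\in\mathscr{L}(J_1,J_2)$ gives the leading summand $\ll J_1J_2\sqrt{\mathcal{M} x}\,x^\epsilon \ll J_1 J_2\, x^{(1+\mu+\kappa)/2}\, x^\epsilon$, using the worst-case $\mathcal{M} = x^{\mu+\kappa}$. For the off-diagonal part, Cauchy--Schwarz on $\ell$ together with~\eqref{eq:LH1H2:CountBound}, and extension of the range to $1\leq\Norm(\ell)<L$ with $L = 5\Norm(\omega^2)(J_1+J_2)^2$ via~\eqref{eq:ell:NormBound}, gives
\[
	\Bigl(\sum_\ell\sqrt{\mathcal{M}\cdot\abs{\mathrm{off}_\ell}}\Bigr)^2 \ll J_1J_2\,\mathcal{M}\sum_{1\leq\Norm(\ell)<L}\abs{\mathrm{off}_\ell}.
\]
Interchanging summations, parametrising via $k = \ell(n_1 - n_2)$, and using unique factorisation (so that each such $k$ has at most $d(k\integers)\ll(L\mathcal{N})^\epsilon$ representations as $\ell(n_1-n_2)$), the remaining sum over $k\in\integers$ with $1\leq\Norm(k)<L\mathcal{N}$ is bounded by \cref{thm:ExpSumBound} (with norm-bound $L\mathcal{N}$ and minimum-parameter $\sqrt{\mathcal{M}}$) by
\[
	\Bigl(1 + \frac{C^2\Norm(\omega^2)L\mathcal{N}}{\Norm(q)}\Bigr)\bigl(\sqrt{\mathcal{M}} + \Norm(q\omega)\log x\bigr).
\]
Expanding this product yields four summands; together with the prefactor $J_1J_2\,\Norm(\omega)\mathcal{M}^{3/2}\mathcal{N}\,x^\epsilon$, back-substituting $L\ll\Norm(\omega)^2(J_1+J_2)^2$ and $\mathcal{M}\mathcal{N} = x$, and taking square roots produces---for appropriate worst-case dyadic values of $\mathcal{M}$---exactly the three summands of~\eqref{eq:TypeII:ExplicitBound} carrying the factor $(J_1J_2)^{1/2+\epsilon}$.

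The main technical hurdle is the careful tracking of the dependence on $C$, $\Norm(\omega)$, and the parameters $J_1,J_2,\mu,\kappa$, together with the optimal choice of the dyadic parameter $\mathcal{M}\in(x^\mu,x^{\mu+\kappa})$ contributing to each of the four sub-summands. For instance, the $\Norm(q)^{1/2} x^{(2+\mu+\kappa)/4}$ term is produced by combining the cross-term $\Norm(q\omega)\sqrt{\mathcal{M}}\log x$ with the worst-case choice $\mathcal{M}\asymp x^{\mu+\kappa}$, whereas the $x^{1-\mu/4}$ term arises from the cross-term $C^2\Norm(\omega)^3 L\mathcal{N}\log x$ with the worst-case choice $\mathcal{M}\asymp x^\mu$. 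A minor subtlety is that the inner $m$-sum in the off-diagonal case ranges over the intersection of two annuli, but this is still an annulus to which \cref{lem:LinearExpSumBound} applies.
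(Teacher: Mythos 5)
Your argument follows essentially the same route as the paper's: split the $m$-range into dyadic blocks, apply Cauchy--Schwarz to isolate a diagonal and off-diagonal piece, estimate the resulting inner linear sum via \cref{lem:LinearExpSumBound}, then average the result using \cref{thm:ExpSumBound}. The only organizational difference is that you apply Cauchy--Schwarz in two stages (first in $m$ with $\ell$ fixed, then in $\ell$ after extracting a square root), whereas the paper does a single joint Cauchy--Schwarz over the pair $(\ell,m)$ by setting $Q=\hat F^2/(J_1J_2K)$; these two routes produce identical bounds, so the difference is cosmetic. Your tracking of where each of the four cross-terms comes from (and which dyadic value of $\mathcal{M}$ is worst for each) is accurate, and you correctly note the subtlety that the $m$-range after expansion is an intersection of annuli, handled by \cref{lem:LinearExpSumBound}. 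One small imprecision: you describe the parametrisation count as ``$d(k\mathcal{O})\ll(L\mathcal{N})^\epsilon$ representations of $k$ as $\ell(n_1-n_2)$'' but then silently also need the factor $\mathcal{N}$ from fixing the difference $n_1-n_2$ and counting pairs; the paper's $c_j\ll U^\epsilon\cdot x/K$ makes this explicit. Your final back-substitution does carry this $\mathcal{N}$ through the prefactor, so the bound you obtain is correct.
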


In the course of the proof of \cref{prop:TypeII:ExplicitBound} and at other places we need the following lemma to control trivial sums over $m$ and $n$.
\begin{lem}
	\label{lem:CombinedDivisorBound}
	Let $\numberfieldK$ be a fixed quadratic number field and $\integers$ its ring of integers. For an ideal $\mathfrak{a}\subseteq\integers$ let $d(\mathfrak{a})$ denote the number of ideals $\mathfrak{b}\supseteq\mathfrak{a}$, and fix $\epsilon>0$ and some integer $\ell\geq 2$. Then, for $x\geq 2$,
	\begin{enumerate}
		\item\label{enu:DivisorAverageBounds} \(\displaystyle
		\sum_{N\mathfrak{a}\leq x} d(\mathfrak{a})^\ell \ll_{\integers,\ell} x \parentheses{\log x}^{2^{2\ell}-1}
		\),
		\item\label{enu:DivisorBound} $d(\mathfrak{a}) \ll_{\integers,\epsilon} (N\mathfrak{a})^{\epsilon}$,
	\end{enumerate}
	where the implied constants depend at most on $\integers$, $\ell$ and $\epsilon$.
\end{lem}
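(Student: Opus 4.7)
For (2), I would mimic Wigert's classical argument in the number-field setting. Multiplicativity of $d$ together with the identity $d(\mathfrak{p}^k)=k+1$ at prime powers reduces the task to the elementary inequality $\log(k+1)\leq k\epsilon \log N\mathfrak{p}$, which is valid for every $k\geq 1$ as soon as $N\mathfrak{p}$ exceeds some threshold depending only on $\epsilon$; the finitely many remaining small-norm primes of $\integers$ are accommodated by enlarging a constant $C(\epsilon)\geq 1$. Multiplying over the prime ideal divisors of $\mathfrak{a}$ then yields $d(\mathfrak{a}) \leq C(\epsilon)^{\nu(\mathfrak{a})} N\mathfrak{a}^{\epsilon}$, where $\nu(\mathfrak{a})$ denotes the number of distinct prime ideal divisors of $\mathfrak{a}$. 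Since $\nu(\mathfrak{a}) \ll \log N\mathfrak{a}$, the factor $C(\epsilon)^{\nu(\mathfrak{a})}$ is at most another $N\mathfrak{a}^\epsilon$ after halving the initial $\epsilon$; this completes (2).

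For (1), the plan is to majorise $d^\ell$ pointwise by a higher divisor function and then prove a mean-value estimate for the majorant. Writing $d_k(\mathfrak{a})$ for the number of ordered $k$-tuples of ideals whose product equals $\mathfrak{a}$, both $d^{\ell}$ and $d_{2^{\ell}}$ are multiplicative, so the pointwise bound $d(\mathfrak{a})^{\ell}\leq d_{2^{\ell}}(\mathfrak{a})$ reduces at a prime power $\mathfrak{p}^{n}$ to the numerical inequality $(n+1)^{\ell}\leq \binom{n+2^{\ell}-1}{2^{\ell}-1}$. Equality holds at $n=0$ and $n=1$, while for $n\geq 2$ the right-hand side is of order $n^{2^{\ell}-1}$, which dominates $n^{\ell}$ since $2^{\ell}-1\geq\ell$ for $\ell\geq 2$; a quick induction on $n$ (or direct expansion of both sides) fills in the remaining small cases.

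With the pointwise bound in hand it suffices to prove $\sum_{N\mathfrak{a}\leq x}d_{k}(\mathfrak{a})\ll_{\integers,k} x(\log x)^{k-1}$ for $k=2^{\ell}$. This follows from iterating the hyperbola method starting from the counting form of Landau's prime ideal theorem, $\sum_{N\mathfrak{a}\leq x}1=\kappa_{\integers}x+O_{\integers}(x^{1-1/[\numberfieldK:\QQ]})$, where $\kappa_{\integers}>0$ is the residue of $\zeta_{\numberfieldK}$ at $s=1$; equivalently, one applies a Mellin/Perron argument to the generating identity $\sum_{\mathfrak{a}}d_{k}(\mathfrak{a})N\mathfrak{a}^{-s}=\zeta_{\numberfieldK}(s)^{k}$, whose only singularity on $\Re s=1$ is a pole of order $k$ at $s=1$, producing a main term $xP_{k}(\log x)$ with $\deg P_{k}=k-1$. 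Plugging in $k=2^{\ell}$ gives the exponent $2^{\ell}-1$, well below the claimed $2^{2\ell}-1$, so (1) follows. I expect no serious obstacle beyond bookkeeping the dependence of the implicit constants on $\integers$, which is permitted by the statement and enters only through $\kappa_\integers$ and the finitely many ramified/small-norm primes.
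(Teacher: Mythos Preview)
Your approach is genuinely different from the paper's. The paper's entire proof is a one-line appeal to a result of L\"u for part~(1), followed by the observation that (2) follows from (1) (indeed, $d(\mathfrak{a})^{\ell}\leq\sum_{N\mathfrak{b}\leq N\mathfrak{a}}d(\mathfrak{b})^{\ell}\ll_{\ell}(N\mathfrak{a})(\log N\mathfrak{a})^{2^{2\ell}-1}$, so $d(\mathfrak{a})\ll(N\mathfrak{a})^{1/\ell+o(1)}$ and one takes $\ell$ large). You instead give a self-contained argument for each part separately, and your route through the pointwise inequality $d^{\ell}\leq d_{2^{\ell}}$ and $\sum_{N\mathfrak{a}\leq x}d_{k}(\mathfrak{a})\ll x(\log x)^{k-1}$ even yields the sharper exponent $2^{\ell}-1$ in place of $2^{2\ell}-1$. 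Both exponents are more than adequate for the paper's applications.

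There is, however, one slip in your argument for (2). You write that since $\nu(\mathfrak{a})\ll\log N\mathfrak{a}$, the factor $C(\epsilon)^{\nu(\mathfrak{a})}$ is at most $N\mathfrak{a}^{\epsilon}$ after halving $\epsilon$. This is false as stated: if $C(\epsilon)>1$ is fixed and $\nu(\mathfrak{a})$ is as large as $(\log N\mathfrak{a})/\log 2$, then $C(\epsilon)^{\nu(\mathfrak{a})}$ is of size $N\mathfrak{a}^{(\log C(\epsilon))/\log 2}$, and $\log C(\epsilon)$ typically grows like $\log(1/\epsilon)$ as $\epsilon\to 0$, so the exponent does not become small. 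The fix is already implicit in what you wrote earlier: the constant $C(\epsilon)>1$ is only needed for the finitely many prime ideals with $N\mathfrak{p}$ below the $\epsilon$-dependent threshold (for the large primes the constant is~$1$), and the number of such small primes is bounded in terms of $\epsilon$ and $\integers$ alone. Hence the product of the per-prime constants is a single constant $C'(\epsilon,\integers)$, not $C(\epsilon)^{\nu(\mathfrak{a})}$, and (2) follows directly without the halving step.
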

\begin{proof}
	The first assertion is a direct consequence of \cite{lu2011on-mean-values}. On the other hand, the second assertion is immediate from the first.
\end{proof}

Using \cref{lem:AlgebraicFacts}~\cref{enu:UnitsInRingOfIntegers} and \cref{lem:CombinedDivisorBound}~\cref{enu:DivisorAverageBounds}, we have
\begin{equation*}%\label{eq:BilinearSumTrivBound}
	\mathop{\sum\!\sum}_{mn\in\mathscr{B}} 1
	\leq \#\braces{\text{units in }\integers} \cdot \sum_{N\mathfrak{a}<x} d(\mathfrak{a})
	\ll x (\log x)^3.
\end{equation*}
(Note that here the dependence on $\integers$ in \cref{lem:CombinedDivisorBound} can be neglected, as we are dealing only with the finitely many imaginary quadratic number fields $\numberfieldK$ with class number~$1$.)

\begin{proof}[Proof of \cref{prop:TypeII:ExplicitBound}]
	Looking at~\cref{eq:F:J1:J2}, we may split the summation over $m$ into \enquote{dyadic annuli,} getting
	\begin{equation}\label{eq:F(H1H2):Decomposition:TypeII}
		F(J_1,J_2)
		\ll (\log x) \max_{\substack{
			x^{\mu} < K,K' \leq x^{\mu+\kappa} \\
			K \leq K'< 2K
		}
		} F(J_1 ,J_2,K,K'),
	\end{equation}
	where, upon employing the transformation described in \cref{subsec:TransformingArg} along the way, $\hat{F} = F(J_1 ,J_2,K,K')$ may be taken to be
	\[
		\mathop{\sum\!\sum}_{\substack{ \ell\in\mathscr{L}(J_1,J_2) \\ K\leq \Norm(m) < K' }} \abs[\bigg]{
			\sum_{x/2\leq \Norm(nm)<x} b_n \e{\ImO{\omega}(\ell mn\alpha)}
		}.
	\]
	(Here and in the following we are always assuming $J_1$, $J_2$, $K$ and $K'$ to be positive integers such that $K\leq K'<2K$.)
	By~\cref{eq:LH1H2:CountBound} and \cref{lem:AlgebraicFacts}~\cref{enu:LatticeCount:UpperBound},
	\[
		\mathop{\sum\!\sum}_{\substack{ \ell\in\mathscr{L}(J_1,J_2) \\ \mathclap{ K\leq \Norm(m) < K' } }} 1
		\ll J_1 J_2 K.
	\]
	Hence, letting
	\begin{equation}\label{eq:Q:Definition}
		Q = (\hat{F})^2 / (J_1 J_2 K),
	\end{equation}
	Cauchy's inequality gives
	\[
		Q \leq \mathop{\sum\!\sum}_{\substack{ \ell\in\mathscr{L}(J_1,J_2) \\ K\leq \Norm(m) < K' }} \abs[\bigg]{
			\sum_{x/2\leq \Norm(nm)<x} b_n \e{\ImO{\omega}(\ell mn\alpha)}
		}^2 ,
	\]
	which, upon expanding the square and rearranging, yields
	\[
		Q \leq \mathop{\sum\!\sum\!\sum}_{\substack{
			\ell \in \mathscr{L}(J_1,J_2) \\
			\mathclap{ x/(2K') \leq \Norm(n) < x/K } \\
			\mathclap{ x/(2K') \leq \Norm(\tilde{n}) < x/K }
		}} \abs{b_n\overline{b_{\tilde{n}}}} \abs[\bigg]{
			{\sideset{}{^{*}}\sum_{m}} \e{\ImO{\omega}\parentheses{\ell m\parentheses{n-\tilde{n}}\alpha}}
		},
	\]
	where ${\sideset{}{_{m}^{*}}\sum}$ restricts the summation to those $m$ with
	\[
		\max\braces{ K, x/2\Norm(n), x/2\Norm(\tilde{n}) }
		\leq \Norm(m)
		< \min\braces{ K', x/\Norm(n), x/\Norm(\tilde{n}) }.
	\]

	Next, we isolate the \enquote{diagonal contribution} $\Delta$, that is, those terms where $n=\tilde{n}$, for in this case the sum over $m$ can only be bounded trivially. Using \cref{lem:CombinedDivisorBound}~\cref{enu:DivisorAverageBounds},~\cref{eq:LH1H2:CountBound} and \cref{lem:AlgebraicFacts}~\cref{enu:LatticeCount:UpperBound}, this is found to be
	\begin{equation}\label{eq:DiagonalBound}
		\begin{aligned}
			\Delta &
			= \sum_{x/(2K') \leq \Norm(n) < x/K} \abs{b_n}^2 \! \sum_{\ell\in\mathscr{L}(J_1,J_2)} \!  {\sideset{}{^{*}}\sum_{m}} 1 \\ &
			\ll x K^{-1} (\log x)^{15} J_1 J_2 K \\ &
			\ll J_1 J_2 x (\log x)^{15}.
		\end{aligned}
	\end{equation}
	Moreover, using~\cref{eq:ell:NormBound}, \cref{lem:CombinedDivisorBound}, \cref{enu:DivisorBound}, \cref{lem:LinearExpSumBound}
	and~\cref{eq:ell:NormBound}, we have
	\[
		Q \ll_{\epsilon} \Delta + \Norm(\omega) \parentheses{x/K}^{2\epsilon} \sqrt{K'}
		\sum_{1 \leq \Norm(j) < \newUpperBound} c_j E(j,\sqrt{K'}),
	\]
	where
	\begin{gather}\label{eq:newUpperBound}
		\newUpperBound = 20 \Norm(\omega^2) \parentheses{J_1 + J_2}^2 x K^{-1}, \\ \nonumber
		c_j = \sum_{\substack{ \ell\in\mathscr{L}(J_1 ,J_2) \\ \ell\mid j }} \mathop{\sum\!\sum}_{\substack{
				x/(2K') \leq \Norm(n)<x/K \\
				x/(2K') \leq \Norm(\tilde{n}) < x/K \\
				j/\ell = (n-\tilde{n})
		}} 1
		\ll_{\epsilon} \newUpperBound^{\epsilon} \cdot x/K
	\end{gather}
	and $E$ is given by~\cref{eq:LinearSumBound}.
	Thus, using~\cref{eq:DiagonalBound} and \cref{thm:ExpSumBound}, and recalling~\cref{eq:Q:Definition},
	\[
		(\hat{F})^2 \ll_{\epsilon} \begin{multlined}[t]
			(J_1J_2)^2 x K (\log x)^{15} + \Norm(\omega) (x/K)^{2\epsilon} \newUpperBound^{\epsilon} J_1 J_2 x \sqrt{K} \times{} \\
			\shoveleft[.5cm]{ \times \parentheses{ 1 + C^2 \Norm(\omega^2) \newUpperBound / \Norm(q) }
			\parentheses{ \sqrt{K} + \Norm(q\omega) \log\parentheses{2\sqrt{K}} }. \hfill }
		\end{multlined}
	\]
	Upon taking the square root, and simplifying the resulting expressions,
	\[
		\hat{F} \ll_{\epsilon} \begin{multlined}[t]
			C \Norm(\omega)^{7/2} x^{2\epsilon} \bigl\lparen J_1J_2 \sqrt{xK} + (J_1J_2)^{1/2+\epsilon} \times{} \\
			\shoveleft[.5cm]{ \times \parentheses{ (J_1+J_2) x \Norm(q)^{-1/2} + (J_1+J_2) x K^{-1/4} + \Norm(q)^{1/2} x^{1/2} K^{1/4} } \bigr\rparen. \hfill }
		\end{multlined}
	\]
	Recalling~\cref{eq:F(H1H2):Decomposition:TypeII}, we infer~\cref{eq:TypeII:ExplicitBound} after adjusting $\epsilon$.
\end{proof}

% --------------------------------------------------------
\subsection{The Type~I sums}\label{subsec:Bound:TypeI}
The next step is to estimate the Type I sums. We establish the following.

\begin{prop}[Type~I bound]
	\label{prop:TypeI:ExplicitBound}
	Consider $F$ from~\cref{eq:F} with $f$ given by~\cref{eq:f} subject to
	\[
		\mrestrictedsum{mn\in\mathscr{B}}
		= \mathop{\sum\!\sum}_{\substack{ x/2 \leq \Norm(mn) < x \\ \Norm(m)<M }},
	\]
	where $M\leq x$ and $x\geq3$. For the coefficients in~\cref{eq:f} assume that $\abs{a_m}\leq 1$ and $b_n = \boldsymbol{1}_{\braces{1 \leq \Norm(n) < x}}$.
	Moreover, suppose that $a$, $q$, $\gamma$ and $C$ are as in~\cref{eq:ThetaApprox}.
	Then, for any $\epsilon \in \lparen 0,\frac{1}{2} \rbrack$,
	\begin{equation}\label{eq:TypeI:ExplicitBound}
		F(J_1,J_2) \ll_\epsilon \begin{multlined}[t]
			C^2 \Norm(\omega)^7 (J_1+J_2)^{\epsilon} (x \Norm(q))^{\epsilon} \times{} \\
			\shoveleft[.5cm]{ \times \parentheses[\big]{ (J_1+J_2)^2 x / \Norm(q) + (J_1+J_2)^2 x^{1/2} M^{1/2} + x^{1/2} \Norm(q) }. \hfill }
		\end{multlined}
	\end{equation}
\end{prop}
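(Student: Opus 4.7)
The plan is to pursue the same general strategy as in the proof of \cref{prop:TypeII:ExplicitBound}, while exploiting the fact that in the Type~I regime $b_n = \boldsymbol{1}_{\braces{1\le\Norm(n)<x}}$: the inner sum over $n$ is a linear exponential sum in its own right, so no Cauchy--Schwarz is necessary. After the variable transformation of \cref{subsec:TransformingArg} we have
\[
	F(J_1,J_2) = \sum_{\ell\in\mathscr{L}(J_1,J_2)} \abs[\bigg]{ \sum_{1\le\Norm(m)<M} a_m \sum_{n} \e{\ImO{\omega}(\ell m n\alpha)} },
\]
where the inner $n$-sum extends over all $n\in\integers$ with $\max\braces{1,\,x/(2\Norm(m))}\le\Norm(n)<x/\Norm(m)$. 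Applying \cref{lem:LinearExpSumBound} to the $n$-sum and pulling $a_m$ out by the triangle inequality using $\abs{a_m}\le 1$ yields
\[
	F(J_1,J_2) \ll \Norm(\omega) \sum_{\ell} \sum_{1\le\Norm(m)<M} \sqrt{x/\Norm(m)}\, E\parentheses{\ell m,\,\sqrt{x/\Norm(m)}},
\]
with $E$ as in~\cref{eq:LinearSumBound}.

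Next I decompose the $m$-range dyadically, $K\le\Norm(m)<2K$ for $K\in\braces{1,2,4,\ldots}\cap[1,M]$, and within each dyadic block combine variables by setting $r=\ell m$. By~\cref{eq:ell:NormBound} one has $\Norm(r)\le U_K := 10\,\Norm(\omega)^2(J_1+J_2)^2 K$, and the multiplicity of each $r$ among pairs $(\ell,m)$ is bounded by the divisor count, hence $\ll_\epsilon \Norm(r)^\epsilon$ via \cref{lem:CombinedDivisorBound}~\cref{enu:DivisorBound}. The $K$-th dyadic block therefore contributes at most
\[
	\Norm(\omega)\,\sqrt{x/K}\,U_K^{\epsilon}\sum_{1\le\Norm(r)\le U_K} E\parentheses{r,\,\sqrt{x/K}},
\]
to which \cref{thm:ExpSumBound} applies.

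The crucial step---and the one I expect to be the main obstacle---is to \emph{split the dyadic values of $K$ into two regimes} at a threshold $K_0$, chosen so that $U_K\le \Norm(q)/(12C\Norm(\omega))^2$ holds precisely when $K\le K_0$; concretely $K_0$ is a positive constant multiple of $\Norm(q)/\bigl(C^2\Norm(\omega)^4(J_1+J_2)^2\bigr)$. For $K\le K_0$ one invokes the refined bound~\cref{eq:ExpSumAverageBound:2}, which gives $\sum_{r}E(r,\sqrt{x/K})\ll\Norm(q\omega)\log\Norm(q\omega)$; multiplying by the outer $\sqrt{x/K}$ and summing dyadically produces the $x^{1/2}\Norm(q)$ term of~\cref{eq:TypeI:ExplicitBound} (up to $\Norm(\omega)$-powers and logarithms, which can be absorbed). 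For $K>K_0$ the ratio $U_K/\Norm(q)$ is bounded below by an absolute constant, so the factor $1+C^2\Norm(\omega^2)U_K/\Norm(q)$ in~\cref{eq:ExpSumAverageBound:1} reduces to $O\bigl(C^2\Norm(\omega)^4(J_1+J_2)^2 K/\Norm(q)\bigr)$; expanding the resulting product against the outer $\sqrt{x/K}$ yields the remaining two target terms, namely $(J_1+J_2)^2 x/\Norm(q)$ (independent of $K$) and $(J_1+J_2)^2\sqrt{xK}\le(J_1+J_2)^2\sqrt{xM}$ (attained at $K=M$).

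The necessity of this split is the subtle point: were one to apply~\cref{eq:ExpSumAverageBound:1} uniformly for every $K$, the constant ``$1$'' in $1+C^2\Norm(\omega^2)U_K/\Norm(q)$ would combine with $(\sqrt{x/K})^2$ to contribute $x/K$, yielding a bare $x$-term at $K=1$ that is \emph{not} in general dominated by any of the three target terms in~\cref{eq:TypeI:ExplicitBound} (for instance, when $\Norm(q)$ and $M$ are both small). Using~\cref{eq:ExpSumAverageBound:2} in the small-$K$ regime precisely replaces this spurious $x$ by the acceptable $x^{1/2}\Norm(q)$. Once the split is in place, summing the $O(\log x)$ dyadic contributions, absorbing logarithms and the divisor factor $U_K^\epsilon$ into the $(x\Norm(q))^\epsilon(J_1+J_2)^\epsilon$ prefactors (after a small initial reduction of $\epsilon$), and exploiting $\Norm(\omega)\ge 1$ to match $\Norm(\omega)$-powers, delivers~\cref{eq:TypeI:ExplicitBound}.
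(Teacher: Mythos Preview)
Your proposal is correct and follows essentially the same route as the paper: dyadic decomposition in $\Norm(m)$, combination of $\ell$ and $m$ into a single variable via the divisor bound, application of \cref{lem:LinearExpSumBound} followed by \cref{thm:ExpSumBound}, and---crucially---the two-regime split according to whether $U_K$ lies below the threshold $\Norm(q)/(12C\Norm(\omega))^2$, invoking~\cref{eq:ExpSumAverageBound:2} in the small-$K$ regime to kill the spurious $x/K$ term. The paper phrases the dyadic step as $(\log x)\max_K$ rather than $\sum_K$, and frames the split as an after-the-fact repair (``for very small $K$, the term $xK^{-1}$ becomes problematic\ldots''), but the substance is identical.
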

\begin{proof}
	As we did with the Type~II sums in the proof of \cref{prop:TypeII:ExplicitBound}, we may split the summation over $m$ into dyadic annuli, getting
	\begin{equation}\label{eq:F(H1H2):Decomposition:TypeI}
		F(J_1,J_2)
		\ll (\log x) \max_{\substack{
			1 \leq K,K' \leq M \\
			K \leq K'< 2K
		}} \tilde{F}(J_1,J_2,K,K'),
	\end{equation}
	where $\tilde{F} = \tilde{F}(J_1 ,J_2,K,K')$ is given by
	\[
		\mathop{\sum\!\sum}_{\substack{ \ell\in\mathscr{L}(J_1,J_2) \\ K\leq \Norm(m)<K' }} \abs[\bigg]{
			\sum_{x/2\leq \Norm(nm) < x} \e{ \ImO{\omega}(\ell mn\alpha) }
		}.
	\]
	Letting $R=\sqrt{x/K}$ and employing \cref{lem:LinearExpSumBound} as well as \cref{lem:CombinedDivisorBound}~\cref{enu:DivisorBound}, we infer
	\begin{align*}
		\tilde{F} &
		\ll \Norm(\omega) R \mathop{\sum\!\sum}_{\substack{ \ell\in\mathscr{L}(J_1,J_2) \\ \mathclap{ K\leq \Norm(m)<K' } }} E(\ell m,R) \\ &
		\ll_{\epsilon} \Norm(\omega) \newUpperBound^{\epsilon} R \!\!\!\! \sum_{K\leq \Norm(j)<\newUpperBound} \!\!\!\! E(j,R),
	\end{align*}
	with $E$ given by~\cref{eq:LinearSumBound} and $U$ by~\cref{eq:newUpperBound} with $K'$ in place of $x/K$.
	\cref{thm:ExpSumBound} now shows that
	\begin{align*}
		\tilde{F} &
		\ll_{\epsilon} \newUpperBound^{\epsilon} \Norm(\omega)  R \parentheses{ 1 + C^2 \Norm(\omega^2) \newUpperBound / \Norm(q) } \parentheses{ R + \Norm(q\omega) \log\parentheses{ 2R } } \\ &
		\ll_{\epsilon} \!\begin{multlined}[t]
			\Norm(\omega^2) (J_1+J_2)^{2\epsilon} x^{\epsilon} (x K^{-1}) + C^2 \Norm(\omega^7) (J_1+J_2)^{2\epsilon} x^{\epsilon} \times{} \\
			\shoveleft[.5cm]{ \times \parentheses{ (J_1+J_2)^2 x / \Norm(q) + (J_1+J_2)^2 x^{1/2} K^{1/2} + x^{1/2} \Norm(q) K^{-1/2} }. \hfill }
		\end{multlined}
	\end{align*}
	Herein, for very small $K$, the term $xK^{-1}$ becomes problematic.
	To circumvent this, we note that \cref{thm:ExpSumBound} also furnishes the bound
	\[
		\tilde{F}
		\ll_{\epsilon} \Norm(\omega)^{7/2} (J_1+J_2)^{2\epsilon} x^{1/2+\epsilon} \Norm(q)^{1+\epsilon} K^{-1/2},
	\]
	provided that
	\begin{equation}\label{eq:newUpperBound:SizeCaseOne}
		\newUpperBound \leq \Norm(q) / \parentheses{ 12C \Norm(\omega) }^2 .
	\end{equation}
	On the other hand, if~\cref{eq:newUpperBound:SizeCaseOne} fails to hold, then, recalling~\cref{eq:newUpperBound}, we have
	\[
		x K^{-1}
		= 10 \Norm(\omega^2) (J_1 +J_2)^2 x\newUpperBound^{-1}
		\ll \Norm(\omega^4) C^2 (J_1+J_2)^2 x / \Norm(q).
	\]
	Therefore, after joining both bounds,
	\[
		\tilde{F}
		\ll_{\epsilon} \!\begin{multlined}[t]
			C^2 \Norm(\omega)^7 (J_1+J_2)^{2\epsilon} (x \Norm(q))^\epsilon \times{} \\
			\shoveleft[.5cm]{ \times \parentheses{ (J_1+J_2)^2 x \Norm(q)^{-1} + (J_1+J_2)^2 x^{1/2} K^{1/2} + 
			x^{1/2} \Norm(q) K^{-1/2} }. }
		\end{multlined}
	\]
	Upon plugging this into~\cref{eq:F(H1H2):Decomposition:TypeI}, we obtain~\cref{eq:TypeI:ExplicitBound} after adjusting $\epsilon$.
\end{proof}

\subsection{Estimation of~\texorpdfstring{$G$}{G}}%\label{subsec:Gest}
The final task is to bound the error term $G$, defined in~\cref{Gdef}. We shall establish the following.

\begin{prop}
	Consider $G$ from~\cref{Gdef}, and suppose that $a$, $q$, $\gamma$ and $C$ are as in~\cref{eq:ThetaApprox}.
	Then we have
	\begin{equation}\label{G12est}
		G
		\ll_{\epsilon} C^2 \Norm(\omega)^3 (xJ)^{\epsilon}
		\parentheses*{
			\Norm(q)^{1/2} +
			\frac{\Norm(q)}{J} +
			\frac{x}{\Norm(q)^{1/2}} +
			\frac{x}{J}
		}.
	\end{equation}
\end{prop}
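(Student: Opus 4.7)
The plan is to isolate each of the four summands in the definition of $G$, one for each $(k,l)\in\braces{1,2}\times\braces{0,1}$, and bound them separately. Write $G_{k,l}$ for the corresponding double sum over $m,n$ in~\cref{Gdef} and set $\beta = (-1)^l\delta$. Using $\abs{a_m}\le 1$, $\abs{b_n}\le d(n\integers)$, collecting terms by $r = mn$ (for which one has $\sum_{mn=r}d(n\integers) = d_3(r\integers)$), and applying \cref{lem:CombinedDivisorBound}\cref{enu:DivisorBound} (which extends to $d_3$ by the same argument), one obtains
\[
	G_{k,l} \ll_\epsilon x^\epsilon \sum_{0<\Norm(r)\le x}\min\braces*{\log 2J,\;\frac{1}{J\norm{\beta - x_{k,r}}}} = \frac{x^\epsilon}{J}\sum_{0<\Norm(r)\le x}\min\braces*{H,\;\frac{1}{\norm{\beta - x_{k,r}}}},
\]
where $H := J\log 2J$; the restriction $\Norm(r)\le x$ is inherited from the enclosing sums in~\cref{eq:BilinearFormError:Estimate}.

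To estimate the inner sum, I would write $r = r_1 + r_2\omega$ with $r_1,r_2\in\ZZ$; by \cref{lem:AlgebraicFacts}\cref{enu:ImOmegaLowerBound} one has $\abs{r_1},\abs{r_2}\ll\sqrt{x}$. For fixed $r_2$, \cref{Imcomp} (and an entirely analogous identity for $\ReO{\omega}(r\alpha)$) expresses $x_{k,r}$ as an affine function of $r_1$ with slope $\eta\in\braces{\ReO{\omega}\alpha,\,\ImO{\omega}\alpha}$, so the inner sum reduces to
\[
	\sum_{\abs{r_1}\ll\sqrt{x}} \min\braces*{H,\;\frac{1}{\norm{\beta^\ast + r_1\eta}}}
\]
for some $\beta^\ast = \beta^\ast(r_2,\beta,\alpha)$. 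To this I would apply the classical Vaughan--Vinogradov estimate: for coprime $\tilde a,\tilde q$ with $\abs{\eta - \tilde a/\tilde q}\le 1/\tilde q^2$, this sum is $\ll(\sqrt{x}/\tilde q + 1)(H + \tilde q\log(H\tilde q))$.

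The crux is choosing $\tilde q$. Since $\alpha = a/q + \gamma$ with $\abs{\gamma}\le C/\Norm(q)$, decompose $\eta = \eta_0 + \eta_\gamma$, where $\eta_0\in\QQ$ has denominator $\tilde q_0 \mid \Norm(q)$ (using $a/q = a\bar q/\Norm(q)$) and $\abs{\eta_\gamma}\le(2/\sqrt{3})\abs{\gamma}\le(2C/\sqrt{3})\Norm(q)^{-1}$ by \cref{lem:AlgebraicFacts}\cref{enu:ImOmegaLowerBound}. Dirichlet's theorem at scale $Q = \sqrt{x}$ yields coprime $\tilde a,\tilde q$ with $\tilde q\le\sqrt{x}$ and $\abs{\eta-\tilde a/\tilde q}\le 1/(\tilde q\sqrt{x})\le 1/\tilde q^2$; a short case analysis---distinguishing $\tilde a/\tilde q = \eta_0$ from $\tilde a/\tilde q\neq\eta_0$ (in the latter case the triangle inequality $\abs{\tilde a/\tilde q - \eta_0}\ge 1/(\tilde q\tilde q_0)$ forces $\tilde q\gg \Norm(q)/(C\tilde q_0)$)---shows that $\tilde q\asymp\Norm(q)^{1/2}$ can be arranged up to factors of $C$ and $\abs{\omega}$. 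Plugging this into the Vaughan--Vinogradov estimate, summing over the $O(\sqrt{x})$ values of $r_2$, reinstating the $1/J$ factor, and summing over $(k,l)$ gives
\[
	G \ll_\epsilon C^2\Norm(\omega)^3(xJ)^\epsilon\parentheses*{\frac{x}{\Norm(q)^{1/2}} + \sqrt{x} + \frac{\sqrt{x\Norm(q)}}{J} + \frac{x}{J}}.
\]
Absorbing the two middle terms via the AM--GM inequalities $\sqrt{x}\le\Norm(q)^{1/2}+x/\Norm(q)^{1/2}$ and $\sqrt{x\Norm(q)}\le(x+\Norm(q))/2$ produces the claimed bound~\cref{G12est}. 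The main obstacle is the Dirichlet-approximation analysis: whether the denominator produced by Dirichlet's theorem coincides with $\tilde q_0$ depends delicately on the size of $\tilde q_0$, and balancing $\tilde q_0$ against $\Norm(q)/\tilde q_0$ to match the symmetric shape of the target requires careful case work; tracking the constants $C$ and $\abs{\omega}$ through this analysis accounts for the prefactor $C^2\Norm(\omega)^3$.
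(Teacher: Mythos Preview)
Your reduction to a one-dimensional sum over $r_1$ is natural, but the step where you claim that a rational approximation with denominator $\tilde q\asymp\Norm(q)^{1/2}$ can be arranged does not go through. The reduced denominator $\tilde q_0$ of $\eta_0=\ImO{\omega}(a/q)$ (or $\ReO{\omega}(a/q)$) can be anything between $1$ and $\Norm(q)$, and neither branch of your case analysis forces $\tilde q\asymp\Norm(q)^{1/2}$. For a concrete obstruction take $\integers=\ZZ[i]$, $q=3$, $a=1$: then $a/q=1/3$ so $\ImO{\omega}(a/q)=0$, hence $\tilde q_0=1$ for $k=2$. The slope $\eta=\ImO{\omega}\alpha=\ImO{\omega}\gamma$ is then $O(C/\Norm(q))$, and the one-dimensional sum $\sum_{\abs{r_1}\ll\sqrt{x}}\min\{H,1/\norm{\beta^{*}+r_1\eta}\}$ can be of size $\sqrt{x}\,H$ for those $r_2$ with $\norm{\beta^{*}}$ small. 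Your final bound would then acquire a term $x\log J$ in place of $x/\Norm(q)^{1/2}$, which is fatal. The point is that projecting $\alpha$ onto a single coordinate discards half of the Diophantine information in~\cref{eq:ThetaApprox}.

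The paper proceeds differently and stays genuinely two-dimensional: it covers $\braces{r:\Norm(r)\le x}$ by $O(1+C^2\Norm(\omega)^2 x/\Norm(q))$ rectangles of diameter $\asymp\abs{q}/\abs{\omega}^2$ (as in \cref{ProofofTh4.4.}) and, within each rectangle, sorts the $r$'s according to which interval $[j/J,(j+1)/J]$ the fractional part $\braces{\delta-\ImO{\omega}(r\alpha)}$ lies in. The two-dimensional spacing bound~\cref{eq:CountingBound}, applied with $\Delta_1=\tfrac12$ and $\Delta_2=1/J$, shows that each such level set contains $\ll\Norm(q\omega)^{1/2}(1+\Norm(q\omega)^{1/2}/J)$ points; summing $\min\{1,1/(J\norm{j/J})\}$ over $j$ and then over rectangles gives~\cref{G12est} directly. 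This argument never needs a one-dimensional rational approximation to $\ReO{\omega}\alpha$ or $\ImO{\omega}\alpha$; it uses only the max-norm separation coming from \cref{lem:FractionalPartBounds:Pertubed}, which is exactly what your projection loses.
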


\begin{proof}
Using the definition of $x_{k,mn}$, writing $r=mn$ and using \cref{lem:CombinedDivisorBound}, we obtain
\[
	G
	\ll_\epsilon \!\begin{multlined}[t]
	(xJ)^{\epsilon}\sum\limits_{l=0}^1 \sum\limits_{1\le \Norm(r)\le x}
	\biggl\lparen \min\braces*{ 1, \frac{1}{J\norm{(-1)^l\delta-\Im_{\omega}(r\theta)}} } +{} \\
	+ \min\braces*{ 1, \frac{1}{J\norm{(-1)^l\delta-\Re_{\omega}(r\theta)}} } \biggr\rparen.
	\end{multlined}
\]
We shall bound
\[%\label{suff}
	\sum\limits_{1\le \Norm(r)\le x} m_r,
	\quad\text{where}\quad
	m_r = \min\braces*{ 1, \frac{1}{J\norm{\delta-\Im_{\omega}(r\theta)}} }
\]
and treat the remaining three sums of this type similarly. To this end, similarly as in \cref{ProofofTh4.4.}, 
we cover the set of $r$'s in question, 
\[
	\mathscr{X}=\braces{ r\in \integers \ :\ 1\le \Norm(r)\le x },
\]
by $O(1 + C^2 N(\omega)^2 x / N(q))$ many rectangles $\mathscr{R}$ with diameter satisfying~\cref{eq:SquareDiameter} (see also~\cref{eq:NumberOfRects:Bound}),
so that
\[
	\mathscr{X}\subset \integers \cap \bigcup\limits_{\mathscr{R}} \mathscr{R}.
\]
Furthermore, we write
\[
	\sum\limits_{r\in \mathscr{X} \cap \mathscr{R}} m_r
	\ll \sum\limits_{j=0}^J
	\min\braces*{ 1,\frac{1}{J\norm{j/J}} } \sum\limits_{\substack{r\in \mathscr{X} \cap \mathscr{R}\\ j/J\le \braces{\delta-\Im_{\omega}(r\theta)}\le (j+1)/J}} 1.
\]
Similarly as in \cref{ProofofTh4.4.} (see~\cref{eq:CountingBound}), we establish that
\[
	\sum\limits_{\substack{r\in \mathscr{X} \cap \mathscr{R}\\ j/J\le \{\delta-\Im_{\omega}(r\theta)\}\le (j+1)/J}} 1
\ll \Norm(q\omega)^{1/2} \parentheses*{ 1+\frac{\Norm(q\omega)^{1/2}}{J} }.
\]
It follows that
\begin{align*}
	\sum\limits_{1\le \Norm(r)\le x} m_r &
	\ll_{\epsilon} \parentheses*{ 1 + \frac{C^2 N(\omega)^2 x}{N(q)} } J^{\epsilon}\Norm(q\omega)^{1/2} \parentheses*{ 1+\frac{\Norm(q\omega)^{1/2}}{J} } \\ &
	= J^{\epsilon}
	\parentheses*{
		\Norm(q\omega)^{1/2} +
		\frac{\Norm(q\omega)}{J} +
		\frac{C^2 \Norm(\omega)^{5/2} x}{\Norm(q)^{1/2}} +
		\frac{C^2 \Norm(\omega)^3 x}{J}
	}.
\end{align*}
Treating the remaining three sums of this type similarly, we obtain \cref{G12est} after adjusting $\epsilon$. 
\end{proof}

% --------------------------------------------------------
\subsection{Assembling the parts}
\label{subsec:AssemblingTheParts}

Finally, we are in a position to use~\cref{eq:BilinearFormError:Estimate:Simplified}.
Assume the hypotheses of \cref{prop:TypeII:ExplicitBound}. Recall~\cref{eq:Weights}. Set
\[
	\tilde{G}\coloneqq \Norm(q)^{1/2}+\Norm(q)J^{-1}+x\Norm(q)^{-1/2}+xJ^{-1},
\]
the term in the brackets on the right-hand side of \cref{G12est}.
Then, looking at~\cref{eq:TypeII:ExplicitBound}, we use~\cref{eq:BilinearFormError:Estimate:Simplified}
and \cref{G12est} together with the inequalities
\begin{gather*}
	\Pi(H) \cdot H \ll 1, \quad \Pi(H) \cdot H^{3/2}\le J^{1/2}, \quad
	\Pi(H) \cdot H^{1/2}\le \delta^{1/2}, \\
	\Pi(H_1)\Pi(H_2) \cdot H_1H_2 \ll 1, \quad
	\Pi(H_1)\Pi(H_2)\cdot (H_1H_2)^{1/2}(H_1+H_2) \ll (\delta J)^{1/2}
\end{gather*}
and
\[
	\Pi(H_1)\Pi(H_2) \cdot (H_1H_2)^{1/2} \ll \delta
\]
if $H,H_1,H_2\le J$,
to bound the error in the Type~II sums (see \cref{eq:BilinearFormError}) as
\[
	\frac{\abs{\bilinError}}{ C^2 \Norm(\omega)^{7/2} (xJ)^\epsilon }
	\ll_{\epsilon} x^{(1+\mu+\kappa)/2} \begin{multlined}[t]
		+ (\delta J)^{1/2} x \Norm(q)^{-1/2} +{} \\
		+ (\delta J)^{1/2} x^{1-\mu/4} + \delta \Norm(q)^{1/2} x^{(2+\mu+\kappa)/4}+\tilde{G}, \hfill
	\end{multlined}
\]
$\epsilon$ being sufficiently small.

Moving on to the Type~I sums, accordingly assuming the hypotheses of \cref{prop:TypeI:ExplicitBound}
and looking at~\cref{eq:TypeI:ExplicitBound}, we use~\cref{eq:BilinearFormError:Estimate:Simplified} and \cref{G12est} 
together with the inequalities
\begin{gather*}
	\Pi(H)\cdot H^2 \ll J, \quad \Pi(H) \ll \delta, \\
	\Pi(H_1)\Pi(H_2)\cdot \parentheses{H_1^2+H_2^2} \ll \delta J
	\quad \mbox{and} \quad
	\Pi(H_1)\Pi(H_2) \ll \delta^2
\end{gather*}
to infer the estimate
\[
	\frac{\abs{\bilinError}}{C^2 \Norm(\omega)^7 (xJ)^\epsilon}
	\ll_{\epsilon} \Norm(q)^\epsilon \parentheses[\big]{ \delta J x \Norm(q)^{-1} + \delta J x^{1/2} M^{1/2} + \delta^2 x^{1/2} \Norm(q) }
	+\tilde{G}
\]
for the error in the Type I sums.

On recalling~\cref{eq:BilinearFormError} and plugging
the above bounds into \cref{thm:HarmansSieve}, we find that
the error
\[
	\tilde{\bilinError} = \frac{
		\abs{ S(w,x^{\kappa}) - S(\tilde{w},x^{\kappa}) }
	}{
		C^2 \Norm(\omega)^7
	}
\]
satisfies the bound
\[
	\frac{\tilde{\bilinError}}{(xJ)^\epsilon}
	\ll_{\epsilon} \! \begin{multlined}[t]
		\parentheses[\big]{\Norm(q)^{1/2}+\Norm(q)J^{-1}+x\Norm(q)^{-1/2}+xJ^{-1}} +{} \\
		\shoveleft[.5cm]{ + \Norm(q)^{\epsilon} \bigl\lparen \delta J x \Norm(q)^{-1} + \delta J x^{1/2} M^{1/2} +{} } \\
		\hfill + \delta^2 x^{3/4} \Norm(q)^{1/2} + \delta^2 x^{1/2} \Norm(q) \bigr\rparen +{} \\
		\shoveleft[.5cm]{ + x^{\parentheses{1+\mu+\kappa}/2} + (\delta J)^{1/2} x \Norm(q)^{-1/2} +{} } \\
		\shoveleft[.5cm]{ + (\delta J)^{1/2} x^{1-\mu/4} + \delta \Norm(q)^{1/2} x^{1/2+(\mu+\kappa)/4}. }
	\end{multlined}
\]
Evidently, this bound is increasing with $\kappa$ and to detect primes, we must take $\kappa = \frac{1}{2}$.
In view of~\cref{eq:NumberFieldPNT}, we shall aim for a bound of the type
\begin{equation}\label{eq:BilinError:TargetBound}
	\abs{\tilde{\bilinError}}
	\ll_{\epsilon} (\delta^2 x) x^{-\epsilon}
\end{equation}
with $\delta$ in some range (w.r.t.\ $x$) as large as possible. 
With this constraint in mind, and given $q$, we take $x = \Norm(q)^{28/5}$ (as was stated in \cref{thm:DiophApprox:Integers}) 
so that $\Norm(q) = x^{5/28}$ and, moreover,
\begin{equation}\label{eq:ParameterChoices}
	J = \ceil{ \delta^{-2} x^{2\epsilon} },\quad
	M = x^{1/2},\quad
	\mu = \tfrac{5}{14}.
\end{equation}
Then, under the additional assumption that $J\le x$, we obtain
\[
	\tilde{\bilinError} \ll_{\epsilon} \delta^2 x^{1-\epsilon} + 
x^{5\epsilon} \parentheses*{ \delta x^{45/56}+x^{13/14}+ \delta^{-1/2}x^{51/56}+\delta^{-1}x^{23/28} },
\]
provided $\epsilon$ is sufficiently small. This implies that \cref{eq:BilinError:TargetBound} holds for sufficiently small $\epsilon$
and 
\[
	\delta\ge x^{-1/28+3\epsilon},
\]
which concludes the proof of \cref{thm:DiophApprox:Integers} after adjusting $\epsilon$. 

% ~~~~~~~~~~~~~~~~~~~~~~~~~~~~~~~~~~~~~~~~~~~~~~~~~~~~~~~
\section{The smoothed version}
\label{sec:Smoothed}

Here, in a similar vein to \cref{sec:HarmanInput}, we work on providing the details for what was outlined in \cref{rem:OverallProcedure}. However, this time the aim is to prove \cref{thm:ApproxWithPrimes}.

\subsection{The modified setup}
\label{sec:HypothesesForSmoothedVersion}

Throughout the rest of \cref{sec:Smoothed} we make the following assumptions: $\epsilon>0$ is supposed to be sufficiently small and fixed. $\numberfieldK$ is an imaginary quadratic number field with class number~$1$. The number $x\geq 3$ is assumed to be sufficiently large and $\alpha,a,q,C$ are as in \cref{thm:DiophApprox:Integers}.
Moreover, we suppose that
\begin{equation}\label{eq:DeltaBounds}
	\tfrac{1}{2} \geq \delta \geq x^{-1000}.
\end{equation}
The exact lower bound here is of no particular consequence, as our final results even fall short of being non-trivial for $\delta \leq x^{-1/16}$. However, in the course of getting there, we need to have bounds of the shape $\delta^{-1}e^{-x^\epsilon} \ll_A x^{-A}$ for any $A>0$ as $x\to\infty$. Such bounds are used---often tacitly---throughout.\\
Furthermore, we write
\begin{equation}\label{eq:N:and:fN:def}
	N = x^{1-\epsilon}
	\quad\text{and}\quad
	f_{N}(z) = e^{-\pi\abs{z}^{2}/N}
\end{equation}
and define the weight function $w$ to be used in conjunction with \cref{thm:HarmansSieve} by
\[
	w(z)=\delta^{2}f_{N}(z).
\]
To define $\tilde{w}$, we let
\[
	W_{\delta}(\vartheta)
	= \sum_{n\in\ZZ} e^{-\pi(\vartheta-n)^{2}/\delta^{2}}
\]
which by Poisson summation formula implies
\[
W_{\delta}(\vartheta)= \delta\sum_{j\in\ZZ} e^{-\pi\delta^{2}j^{2}} \e{j\vartheta}.
\]
Then let
\[
	\tilde{w}(z) = f_{N}(z)W_{\delta} \parentheses{\ImO{\omega}(z\alpha)} W_{\delta}\parentheses{\ReO{\omega}(z\alpha)+\xi_{2}\ImO{\omega}(z\alpha)}
\]
with $\xi_2$ from~\cref{eq:OmegaSquared:XiDecomp}.

\subsection{Removing the weights}
\label{sec:Removing-the-weights}

Our next immediate goal is to see that $w$ and $\tilde{w}$ are actually suitable weights for the type of argument outlined in \cref{rem:OverallProcedure}.
This is contained in \cref{cor:Detector} below, but first we need two lemmas. We use the notation $S(\,\cdot\,,\sqrt{x})$ from~\cref{eq:SiftingFunction}.

\begin{lem}
	\(\displaystyle
		S(w,\sqrt{x}) \gg_{\epsilon} \delta^{2} \frac{N}{\log N}
	\).
\end{lem}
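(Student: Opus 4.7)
The plan is to note that $w\geq 0$, so $S(w,\sqrt{x})$ is bounded below by any sub-sum, and then to restrict attention to actual prime elements of moderate size where the Gaussian weight $f_N$ is bounded away from zero.

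Concretely, first I would write
\[
	S(w,\sqrt{x})
	= \delta^2 \sum_{\substack{ r\in\integers\setminus\braces{0} \\ \text{prime } p\mid r\,\Rightarrow\,\Norm(p)\geq\sqrt{x} }} e^{-\pi\Norm(r)/N}
	\geq \delta^2 \!\!\!\! \sum_{\substack{ p\in\integers\text{ prime} \\ \sqrt{x}\leq\Norm(p)\leq N }} e^{-\pi\Norm(p)/N},
\]
using non-negativity to drop all composite $r$ and all primes with $\Norm(p)>N$. On the truncated range we have $e^{-\pi\Norm(p)/N}\geq e^{-\pi}$, so
\[
	S(w,\sqrt{x})
	\geq \delta^2 e^{-\pi} \cdot \#\braces{ p\in\integers\text{ prime} : \sqrt{x}\leq\Norm(p)\leq N }.
\]

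The second step is to count the prime elements in the range $\sqrt{x}\leq\Norm(p)\leq N$. Because $\numberfieldK$ has class number $1$, prime ideals are in $(\#\text{units})$-to-one correspondence with prime elements via $\mathfrak{p}\mapsto p$, so Landau's prime ideal theorem (from which \cref{eq:NumberFieldPNT} is derived) gives
\[
	\#\braces{p\in\integers\text{ prime}:\Norm(p)\leq y}
	= c_\integers\,\frac{y}{\log y}(1+o(1))
	\quad\text{as } y\to\infty,
\]
with $c_\integers>0$ depending only on $\integers$. Subtracting the count up to $\sqrt{x}$, which is of order $\sqrt{x}/\log x = o(N/\log N)$ (recall $N=x^{1-\epsilon}$ with $\epsilon$ small), one obtains
\[
	\#\braces{ p\in\integers\text{ prime}:\sqrt{x}\leq\Norm(p)\leq N } \gg_\epsilon \frac{N}{\log N},
\]
and combining with the preceding display yields the claimed bound $S(w,\sqrt{x})\gg_\epsilon \delta^2 N/\log N$.

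The main (minor) subtlety is merely bookkeeping: one must ensure that the ambient set defining $S(w,\sqrt{x})$ really contains all prime elements of norm between $\sqrt{x}$ and $N$ (it does, since each such $p$ vacuously satisfies \enquote{every prime divisor has norm $\geq\sqrt{x}$}), and that \cref{eq:NumberFieldPNT} is being invoked over the correct interval. No other step is delicate: positivity of $w$ does all the work, and the $\epsilon$-dependence in the implied constant enters only through the substitution $N=x^{1-\epsilon}$ when passing from the prime ideal theorem's asymptotic to the stated lower bound.
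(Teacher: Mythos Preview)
Your proof is correct and follows essentially the same route as the paper: restrict by positivity to prime elements $p$ with $\sqrt{x}\leq\Norm(p)\leq N$, use $e^{-\pi\Norm(p)/N}\geq e^{-\pi}$ on that range, and then invoke the prime ideal theorem as in~\cref{eq:NumberFieldPNT}. The paper's version is just more terse; your additional remarks about the units-to-elements correspondence and the subtraction of the count up to $\sqrt{x}$ are sound but not strictly needed for the argument.
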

\begin{proof}
	The claim follows at once from
	\[
		S(w,\sqrt{x})
		= \sum_{\substack{r\in\integers\setminus\braces{0} \\ \text{prime }p \mid r \Rightarrow \Norm(p)\geq\sqrt{x}}} w(r)
		\geq \sum_{\substack{\text{prime }p\in\integers \\ \sqrt{x}\leq\Norm(p)\leq N }} w(p)
		\geq \delta^{2} e^{-\pi} \sum_{\substack{ \text{prime }p\in\integers \\ \sqrt{x}\leq\Norm(p)\leq N }} 1
	\]
	after applying the prime number theorem for $\integers$ (see~\cref{eq:NumberFieldPNT}).
\end{proof}

\begin{lem}
	\(\displaystyle
		S(\tilde{w},\sqrt{x})
		\ll_{\epsilon,\omega} \sum_{\substack{
			\text{\rm prime }p\in\integers \\
			\Norm(p)<x \\
			\norm{p\alpha}_{\omega}<\delta x^{\epsilon}
		}} e^{-\pi\Norm(p)/N}
		+ 1
%		+ \delta^{2}x^{-A}
	\).
\end{lem}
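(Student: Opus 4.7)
The plan of proof proceeds in three main steps: an elementary estimate on $W_\delta$, a decomposition of $S(\tilde{w},\sqrt{x})$ that reduces matters to a sum over primes, and a case split that extracts the condition $\norm{p\alpha}_\omega<\delta x^\epsilon$ from the $W_\delta$-factors.

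First, I would establish the uniform bound $W_\delta(\vartheta)\ll 1$ together with the decay estimate $W_\delta(\vartheta)\ll e^{-\pi\norm{\vartheta}^2/\delta^2}+4e^{-\pi/(4\delta^2)}$. Writing $\vartheta=m+t$ with $\abs{t}=\norm{\vartheta}\leq\tfrac{1}{2}$, these follow by splitting the series $\sum_{k\in\ZZ}e^{-\pi(t-k)^2/\delta^2}$ into the $k=0$ term and the tail, the latter being dominated by $\sum_{k\geq 1}e^{-\pi(k-\tfrac{1}{2})^2/\delta^2}\ll e^{-\pi/(4\delta^2)}$ since $\delta\leq\tfrac{1}{2}$ by~\cref{eq:DeltaBounds}.

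Second, I split $S(\tilde{w},\sqrt{x})$ according to $\Norm(r)\geq x$ or $\Norm(r)<x$. Using $\tilde{w}(r)\ll f_N(r)=e^{-\pi\Norm(r)/N}$ and $N=x^{1-\epsilon}$, the tail $\Norm(r)\geq x$ enjoys super-polynomial decay, and a dyadic decomposition via \cref{lem:AlgebraicFacts}~\cref{enu:LatticeCount:UpperBound} shows its contribution is $\ll 1$. For $\Norm(r)<x$, the sifting condition forces $r$ to be either a unit (contributing $O(1)$ in total) or a prime $p$ with $\sqrt{x}\leq\Norm(p)<x$, since a product of two or more primes of norm $\geq\sqrt{x}$ would already have norm $\geq x$.

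Third, for the remaining prime sum, I would split based on the threshold $\Delta=\delta x^\epsilon/(2(1+\abs{\xi_2}))$, with $\xi_2\in\ZZ$ from~\cref{eq:OmegaSquared:XiDecomp}. One may assume $\Delta<\tfrac{1}{2}$, else $\norm{p\alpha}_\omega<\delta x^\epsilon$ becomes vacuous and the claim is trivial. In \emph{Case~A}, both $\norm{\ImO{\omega}(p\alpha)}\leq\Delta$ and $\norm{\ReO{\omega}(p\alpha)+\xi_2\ImO{\omega}(p\alpha)}\leq\Delta$; the uniform $W_\delta$-bound gives $\tilde{w}(p)\ll f_N(p)$, while the triangle inequality combined with $\norm{\xi_2 y}\leq\abs{\xi_2}\norm{y}$ yields
\[
\norm{\ReO{\omega}(p\alpha)}\leq\norm{\ReO{\omega}(p\alpha)+\xi_2\ImO{\omega}(p\alpha)}+\abs{\xi_2}\norm{\ImO{\omega}(p\alpha)}\leq(1+\abs{\xi_2})\Delta=\tfrac{1}{2}\delta x^\epsilon,
\]
whence $\norm{p\alpha}_\omega<\delta x^\epsilon$ and the Case~A contribution fits inside the right-hand side. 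In \emph{Case~B} (the complement), at least one $W_\delta$-factor is bounded by $e^{-\pi(\Delta/\delta)^2}+4e^{-\pi/(4\delta^2)}$, and both summands are super-polynomially small in $x$ because $1/\delta>x^\epsilon/(1+\abs{\xi_2})$ whenever $\Delta<\tfrac{1}{2}$; so $\tilde{w}(p)\ll f_N(p)\cdot x^{-A}$ for every $A$, and summing over all primes using $\sum_{r\in\integers}f_N(r)\ll_\omega N$ bounds Case~B by $\ll 1$.

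The main subtlety is the non-diagonal shape of the second argument of $W_\delta$ appearing in $\tilde{w}$, namely $\ReO{\omega}(p\alpha)+\xi_2\ImO{\omega}(p\alpha)$ rather than $\ReO{\omega}(p\alpha)$. This choice, dictated by \cref{Imcomp} and essential for the clean Poisson summation arguments in the remaining sections, obliges one to peel off the $\xi_2$-coupling when translating the joint support constraint imposed by the two $W_\delta$-factors back into a condition on $\norm{p\alpha}_\omega$; this is what forces the implied constant in the conclusion to depend on $\omega$.
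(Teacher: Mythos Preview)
Your proposal is correct and follows essentially the same strategy as the paper: bound the tail $\Norm(r)\geq x$ and the unit contribution by $O(1)$, reduce to primes $\sqrt{x}\leq\Norm(p)<x$, and then exploit the Gaussian decay of $W_\delta$ to isolate the primes with $\norm{p\alpha}_\omega<\delta x^\epsilon$. The only organisational difference is that the paper splits first on the target condition $\norm{p\alpha}_\omega\gtrless\delta x^\epsilon$ and then argues that in the ``large'' case one of the two $W_\delta$-factors must be small, whereas you split first on the $W_\delta$-arguments themselves and then deduce $\norm{p\alpha}_\omega<\delta x^\epsilon$ in Case~A via the triangle inequality for $\norm{\,\cdot\,}$; the two arguments are dual to one another and of equal difficulty.
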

\begin{proof}
	We may split up $S(\tilde{w},\sqrt{x})$ as follows:
%	\[
%		\braces[\Big]{
%			\sum_{\substack{
%				\text{prime }p\in\integers \\
%				\sqrt{x}\leq\Norm(p)<x \\
%				\norm{p\alpha}_{\omega}<\delta x^{\epsilon}
%			}}
%			+ \sum_{\substack{
%				\text{prime }p\in\integers \\
%				\sqrt{x}\leq\Norm(p)<x \\
%				\norm{p\alpha}_{\omega} \geq \delta x^{\epsilon}
%			}
%		}}
%		W_{\delta}\parentheses{\ImO{\omega}(p\alpha)} W_{\delta}\parentheses{\ReO{\omega}(p\alpha)+\xi_{2}\ImO{\omega}(p\alpha)}
%		e^{-\pi\Norm(p)/N}
%		+ \braces{\text{error term}},
%	\]
	\[
		\braces[\Big]{
			\sum_{\substack{
				\text{prime }p\in\integers \\
				\sqrt{x}\leq\Norm(p)<x \\
				\norm{p\alpha}_{\omega}<\delta x^{\epsilon}
			}}
			+ \sum_{\substack{
				\text{prime }p\in\integers \\
				\sqrt{x}\leq\Norm(p)<x \\
				\norm{p\alpha}_{\omega} \geq \delta x^{\epsilon}
			}
		}}
		\braces{\text{terms}}
		+ \braces{\text{error term}},
	\]
	where the terms being summed are
	\[
		W_{\delta}\parentheses{\ImO{\omega}(p\alpha)} W_{\delta}\parentheses{\ReO{\omega}(p\alpha)+\xi_{2}\ImO{\omega}(p\alpha)}
		e^{-\pi\Norm(p)/N}
	\]
	and
	\[
		\braces{\text{error term}}
		\ll_{\epsilon} 1 + \sum_{\substack{ r\in\integers\\ \Norm(r)\geq x }} e^{-\pi\Norm(r)/N}
		\ll_{\epsilon} 1.
%		\ll_{A,\epsilon} \delta^{2}x^{-A}.
	\]
	The first sum is bounded using the trivial estimate $W_{\delta}(\,\cdot\,)\ll 1$. \\
	Now if $\norm{p\alpha}_\omega \geq \delta x^{\epsilon}$, then $\norm{\ImO{\omega}(p\alpha)}\geq\delta x^{\epsilon}/2\xi_{2}$, or $\norm{\ImO{\omega}(p\alpha)} \leq \delta x^{\epsilon}/2\xi_{2}$ \emph{and} $\norm{\ReO{\omega}(p\alpha)}\geq\delta x^{\epsilon}$.
	In the first case, using the inequality
	\begin{equation}\label{eq:squareineq}
		(\norm{\vartheta}-m)^{2}
		\geq m^{2}/4
		\quad (\vartheta\in\RR, \, m\in\ZZ\setminus\braces{0}),
	\end{equation}
	we have
	\begin{align*}
		W_{\delta}\parentheses{\ImO{\omega}(p\alpha)} &
		= \sum_{m\in\ZZ}e^{-\pi(\norm{\ImO{\omega}(p\alpha)}-m)^{2}/\delta^{2}}
		\ll e^{-\pi x^{2\epsilon}/(2\xi_{2})^{2}} + \sum_{m=1}^{\infty} e^{-\frac{\pi}{4}m^{2}/\delta^{2}} \\ &
		\ll_{A,\epsilon,\omega} \delta^{2}x^{-A-1},
	\end{align*}
	where for the last estimate we employ~\cref{eq:DeltaBounds}. In the second case, the assumptions ensure that $\norm{\ReO{\omega}(p\alpha)+\xi_{2}\ImO{\omega}(p\alpha)}\geq\delta x^{\epsilon}/2$.
	Therefore, by arguing as before, we have
	\[
		W_{\delta}\parentheses{\ReO{\omega}(p\alpha)+\xi_{2}\ImO{\omega}(p\alpha)}
		\ll e^{-\frac{\pi}{4}x^{2\epsilon}} + \sum_{m=1}^{\infty} e^{-\frac{\pi}{4}m^{2}/\delta^{2}}
		\ll_{A,\epsilon} \delta^{2}x^{-A-1}.
	\]
	Thus, altogether we have
	\begin{align*}
		\sum_{\substack{
				\text{prime }p\in\integers \\
				\Norm(p)<x \\
				\norm{p\alpha}_{\omega}\geq\delta x^{\epsilon}
		}}
		\begin{multlined}[t]
			W_{\delta}\parentheses{\ImO{\omega}(p\alpha)}
			W_{\delta}\parentheses{\ReO{\omega}(p\alpha)+\xi_{2}\ImO{\omega}(p\alpha)} e^{-\pi\Norm(p)/N} \\[2mm]
			\ll_{A,\epsilon,\omega} \delta^{2} x^{-A-1} \sum_{\substack{ r\in\integers \\ \mathclap{\Norm(r)<x} }} 1
			\ll_{A,\epsilon,\omega} \delta^{2} x^{-A}.
		\end{multlined}
	\end{align*}
%	\begin{align*}
%		\MoveEqLeft
%		\sum_{\substack{
%			\text{prime }p\in\integers \\
%			\Norm(p)<x \\
%			\norm{p\alpha}_{\omega}\geq\delta x^{\epsilon}
%		}} W_{\delta}\parentheses{\ImO{\omega}(p\alpha)} W_{\delta}\parentheses{\ReO{\omega}(p\alpha)+\xi_{2}\ImO{\omega}(p\alpha)} e^{-\pi\Norm(p)/N} \\ &
%		\ll_{A,\epsilon,\omega} \delta^{2} x^{-A-1} \sum_{\substack{ r\in\integers \\ \Norm(r)<x }} 1
%		\ll_{A,\epsilon,\omega} \delta^{2} x^{-A}.
%	\end{align*}
	This proves the lemma.
\end{proof}
\begin{cor}
	\label{cor:Detector}
	Still assuming the hypotheses from \cref{sec:HypothesesForSmoothedVersion}, suppose that one knows that
	\[
		\abs{ S(w,\sqrt{x}) - S(\tilde{w},\sqrt{x}) }
		= o_{\epsilon,\omega}\parentheses*{ \delta^{2} \frac{N}{\log N} },
	\]
	where $S(\,\cdot\,,\sqrt{x})$ is defined as in~\cref{eq:SiftingFunction}.
	Then
	\[
		\sum_{\substack{
			\text{\rm prime }p\in\integers \\
			\Norm(p)<x \\
			\norm{p\alpha}_{\omega}<\delta x^{\epsilon}
		}} e^{-\pi\Norm(p)/N}
		\gg_{\epsilon,\omega} \delta^{2} \frac{N}{\log N}.
	\]
	In particular, for any sufficiently large $x$, there is a prime element $p\in\integers$ such that $\Norm(p)<x$ and $\norm{p\alpha}_{\omega} < \delta x^{\epsilon}$.
\end{cor}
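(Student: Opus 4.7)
The plan is to deduce both conclusions as a direct packaging of the two preceding lemmas together with the standing hypothesis; the key ingredient will be a simple triangle-inequality argument applied to $S(\tilde{w},\sqrt{x})$.

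First I would invoke the first lemma to secure the unconditional lower bound $S(w,\sqrt{x}) \gg_\epsilon \delta^2 N/\log N$, and then combine it with the hypothesis $\abs{S(w,\sqrt{x}) - S(\tilde{w},\sqrt{x})} = o_{\epsilon,\omega}(\delta^2 N/\log N)$ via the triangle inequality to obtain
\[
	S(\tilde{w},\sqrt{x}) \geq S(w,\sqrt{x}) - \abs{S(w,\sqrt{x}) - S(\tilde{w},\sqrt{x})} \gg_{\epsilon,\omega} \delta^2 N/\log N
\]
for all sufficiently large $x$, which becomes legitimate once the little-$o$ term drops below, say, one half of the main term.

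Next, the second lemma delivers the matching upper bound
\[
	S(\tilde{w},\sqrt{x}) \ll_{\epsilon,\omega} \sum_{\substack{\text{prime } p \in \integers \\ \Norm(p) < x \\ \norm{p\alpha}_\omega < \delta x^\epsilon}} e^{-\pi \Norm(p)/N} + 1,
\]
so chaining the two estimates and rearranging yields
\[
	\sum_{\substack{\text{prime } p \in \integers \\ \Norm(p) < x \\ \norm{p\alpha}_\omega < \delta x^\epsilon}} e^{-\pi \Norm(p)/N} \gg_{\epsilon,\omega} \delta^2 N/\log N - O_{\epsilon,\omega}(1).
\]
Within any regime in which $\delta^2 N/\log N$ tends to infinity with $x$---the only regime where the bound is meant to be non-trivial, and in particular the one dictated by the subsequent proof of \cref{thm:ApproxWithPrimes}---the residual $O(1)$ will be absorbed into the main term after a harmless adjustment of the implicit constant, completing the first claim.

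For the \enquote{in particular} clause I would merely observe that each summand $e^{-\pi \Norm(p)/N}$ is bounded above by $1$, so a strictly positive value of the sum already forces the existence of at least one prime $p \in \integers$ satisfying $\Norm(p) < x$ and $\norm{p\alpha}_\omega < \delta x^\epsilon$; the lower bound just obtained is certainly positive for all sufficiently large $x$. I do not anticipate any substantive obstacle here since the real work has already been carried out in the two preceding lemmas; the only point requiring mild attention is verifying that $\delta$ lies in a range where the absorption of the $O(1)$ term is legitimate, and this is unproblematic given \cref{eq:DeltaBounds} together with the explicit ranges of $\delta$ imposed in the actual invocations of the corollary.
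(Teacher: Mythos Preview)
Your proposal is correct and matches the paper's intended argument: the corollary is stated without proof precisely because it is meant to follow immediately from the two preceding lemmas via the triangle-inequality chaining you describe. Your remark about absorbing the $O_{\epsilon,\omega}(1)$ term is also appropriate, since in the only application of the corollary (the proof of \cref{thm:ApproxWithPrimes}) one has $\delta\geq x^{-1/8+10\epsilon}$, so $\delta^{2}N/\log N\to\infty$ and the absorption is harmless.
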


\subsection{Estimation of smoothed sums}
The next result is a smoothed analogue of \cref{lem:LinearExpSumBound}. Later, this is used in combination with \cref{thm:Diophantine}. (The reader may contrast this with our use of \cref{thm:ExpSumBound} as the underlying tool for proving \cref{prop:TypeI:ExplicitBound} and \cref{prop:TypeII:ExplicitBound}.)
\begin{lem}
	\label{lem:PoissonResult-1}
	Let $R \geq 1$ and $\gamma\in\RR$. Then, for every $\epsilon>0$ and $f_R$ defined as in~\cref{eq:N:and:fN:def},
	\[
		\abs[\Big]{ \sum_{m\in\integers} f_{R}(m) \e{\ImO{\omega}(m\vartheta)} }
		\ll_{\epsilon,\omega}
		\begin{cases}
			R & \text{if } \norm{\vartheta}_{\omega} < x^{\epsilon}/\sqrt{R} \\
			R e^{-x^{\epsilon}} & \text{otherwise}.
		\end{cases}
	\]
\end{lem}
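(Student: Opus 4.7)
The approach is to view the sum as a theta-type series over the rank-$2$ lattice $\ZZ\oplus\ZZ\omega$ and exploit the Gaussian weight via the $2$-dimensional Poisson summation formula. Writing $m=m_1+m_2\omega$ with $(m_1,m_2)\in\ZZ^2$ and using~\cref{Imcomp} together with the integrality of $\xi_2=\ImO{\omega}(\omega^2)$ (from~\cref{eq:OmegaSquared:XiDecomp}), one has $\ImO{\omega}(m\vartheta)=m_1 a+m_2 b$, where $a=\ImO{\omega}\vartheta$ and $b=\ReO{\omega}\vartheta+\xi_2\ImO{\omega}\vartheta$. Since $\abs{m}^2=m_1^2+2m_1m_2\Re\omega+m_2^2\abs{\omega}^2$ is a positive definite quadratic form $Q$ with $\det Q=(\Im\omega)^2$ bounded away from zero by \cref{lem:AlgebraicFacts}~\cref{enu:ImOmegaLowerBound}, Poisson summation gives
\[
	S=\frac{R}{\abs{\Im\omega}}\sum_{k\in\ZZ^2}\exp\!\bigl(-\pi R\, Q^*(k-v)\bigr), \qquad v=(a,b),
\]
where $Q^*(u)=u^{\mathsf{T}}Q^{-1}u$.

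The heart of the argument is the lower bound
\[
	Q^*(k-v)\gg_\omega\norm{\vartheta}_\omega^2 \quad\text{for every }k\in\ZZ^2.
\]
Because $Q^{-1}$ is positive definite with smallest eigenvalue at least $1/(\abs{\omega}^2+1)$ (as follows from $\det Q^{-1}=(\Im\omega)^{-2}$ and $\operatorname{tr} Q^{-1}=(\abs{\omega}^2+1)(\Im\omega)^{-2}$), this reduces to proving $\norm{a}^2+\norm{b}^2\gg_\omega\norm{\vartheta}_\omega^2$, where $\norm{\cdot}$ denotes distance to $\ZZ$. Splitting into cases according to whether $\norm{\ImO{\omega}\vartheta}$ or $\norm{\ReO{\omega}\vartheta}$ realises $\norm{\vartheta}_\omega$, and in the latter case invoking the inequality $\norm{b}\geq\norm{\ReO{\omega}\vartheta}-\abs{\xi_2}\norm{\ImO{\omega}\vartheta}$ (which holds because $\xi_2\in\ZZ$ implies $\norm{\xi_2 y}\leq\abs{\xi_2}\norm{y}$), yields the claim. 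I expect this to be the main technical step, essentially because the shear by $\xi_2$ in the second coordinate needs to be handled with care.

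Once this bound is in hand, a standard isolation argument---bounding all terms other than the minimiser $k^\ast$ of $Q^*(k-v)$ by a convergent Gaussian series whose sum is absorbed into the implied constant---yields $\abs{S}\ll_\omega R\exp(-c_\omega R\norm{\vartheta}_\omega^2)$. When $\norm{\vartheta}_\omega\geq x^\epsilon/\sqrt{R}$, this gives $\abs{S}\ll_\omega R\exp(-c_\omega x^{2\epsilon})\leq R\, e^{-x^\epsilon}$ for $x$ sufficiently large. The complementary case $\norm{\vartheta}_\omega<x^\epsilon/\sqrt{R}$ is handled by the trivial bound $\abs{S}\leq\sum_{m\in\integers}f_R(m)\ll_\omega R$, which is itself a consequence of the same Poisson formula applied with $v=0$. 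A final adjustment of $\epsilon$ then completes the proof.
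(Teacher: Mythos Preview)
Your approach is correct and essentially the same as the paper's: both apply two-dimensional Poisson summation to convert the sum into a dual Gaussian series, and both hinge on the integrality of $\xi_2 = 2\Re\omega$ (cf.\ \cref{lem:AlgebraicFacts}~\cref{enu:2ReOmegaIsIntegral}) to carry out a case split showing that the dual exponent is $\gg_\omega R\norm{\vartheta}_\omega^2$. Your packaging via the dual form $Q^*$ and an eigenvalue lower bound is somewhat more streamlined than the paper's explicit coordinate calculation (which separates the $(0,0)$ term $\Sigma_0$ from the tail $\Sigma_*$ and treats them individually), but the substance is identical.
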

\begin{proof}
	Let $A_\omega$ be the invertible $(2\times 2)$-matrix
	\(
		\parentheses*{\!\begin{smallmatrix}
			1 & \Re\omega \\ 0 & \Im\omega
		\end{smallmatrix}\!}
	\)
	and write $A_\omega^{-\top} = (A_\omega^{-1})^{\top}$.
	Moreover, recall that $\omega^{2}=\xi_{1}+\xi_{2}\omega$ and let $v_{\vartheta} = ( \ImO{\omega}\vartheta, \xi_{2}\ImO{\omega}\vartheta + \ReO{\omega}\vartheta )$.
	Then, writing $g(u,v) = e^{-\pi(u^{2}+v^{2})}$, we have
	\[
		\Sigma
		\coloneqq \sum_{m\in\integers} f_{R}(m) \e{\ImO{\omega}(m\vartheta)}
		= \sum_{m\in\ZZ^{2}} g(R^{-1/2}A_\omega m) \e{\inner{m,v_{\vartheta}}}.
	\]
	By the Poisson summation formula (see, e.g., \cite{stein1971intro-fourier-analysis}) and a change of variables,
	\begin{align*}
		\Sigma &
		= \sum_{n\in\ZZ^{2}} \int_{\RR^{2}} g(R^{-1/2}A_\omega m) \e{\inner{m,v_{\vartheta}}} \e{-\inner{m,n}} \dd{m} \\ &
		= \frac{R}{\abs{\det A_\omega}} \sum_{n\in\ZZ^{2}} \int_{\RR^{2}} g(y) \e{-\inner{R^{1/2}A_\omega^{-1}y,n-v_{\vartheta}}} \dd{y} \\ &
		= \frac{R}{\abs{\det A_\omega}} \sum_{n\in\ZZ^{2}} \int_{\RR^{2}} g(y) \e{-\inner{ y,R^{1/2}A_{\omega}^{-\top}(n-v_{\vartheta})}} \dd{y}\!.
	\end{align*}
	Using the fact that $g$ is its own Fourier transform, we have
	\[
		\Sigma = \frac{R}{\abs{\Im\omega}} \sum_{ m \in A^{-\top} (\ZZ^{2} - v_{\vartheta}) } g(R^{1/2}m).
	\]
	A quick computation shows that
	\[
		A_\omega^{-\top} \parentheses*{ \! \begin{pmatrix} n_{1} \\ n_{2} \end{pmatrix} - v_{\vartheta} \! }
		= \begin{pmatrix}
			n_{1} - \ImO{\omega} \vartheta \\
			\frac{ n_{2} - \ReO{\omega}\vartheta - (2\Re\omega)\ImO{\omega}\vartheta - (n_{1} - \ImO{\omega}\vartheta) \Re\omega }{\Im\omega}
		\end{pmatrix}.
	\]
	Consequently, after a linear change of summation variables,
	\[
		\Sigma = \frac{R}{\abs{\Im\omega}} \mathop{\sum\sum}_{k_{1},k_{2}\in\ZZ} e^{-\pi R\parentheses{ (k_{1}-\kappa_{1})^{2} + (k_{2}-\kappa_{2}(k_{1}))^{2} / (\Im\omega)^{2} }},
	\]
	where
	\begin{gather*}
		\kappa_{1} = \norm{\ImO{\omega}\vartheta}, \\
		\kappa_{2}(k_{1}) = \norm{ \ReO{\omega}\vartheta - (2\Re\omega)(\ImO{\omega}\vartheta) - ( k_{1} - \norm{\ImO{\omega}\vartheta} ) \Re\omega }.
	\end{gather*}
	As the bound $\Sigma \ll R$ is trivial, we now assume that
	\[
		\tfrac{1}{2} \geq \max\braces{ \norm{\ReO{\omega}\vartheta}, \norm{\ImO{\omega}\vartheta} } = \norm{\vartheta}_{\omega} \geq x^{\epsilon}/\sqrt{R}.
	\]
	In particular, we have $R \geq 4 x^{2\epsilon}$. To bound $\Sigma$ we split off the term for $k_{1} = k_{2} = 0$, namely
	\[
		\Sigma_{0} = \frac{R}{\abs{\Im\omega}} e^{ -\pi R\parentheses{ \kappa_{1}^{2} + \kappa_{2}(0)^{2} / (\Im\omega)^{2} } },
	\]
	from the rest, that is,
	\[
		\Sigma = \Sigma_0 + \Sigma_*, \quad
		\Sigma_{*} = \frac{R}{\abs{\Im\omega}} \mathop{\sum\sum}_{\substack{
			k_{1},k_{2}\in\ZZ \\
			\mathclap{(k_{1},k_{2})\neq(0,0)}
		}} e^{-\pi R\parentheses{ (k_{1}-\kappa_{1})^{2} + (k_{2}-\kappa_{2}(k_{1}))^{2} / (\Im\omega)^{2} }}.
	\]
	Using~\cref{eq:squareineq}, we have
	\begin{align*}
		\Sigma_{*} &
		\ll R \mathop{\sum\sum}_{\substack{
			k_{1},k_{2}\in\ZZ \\
			\mathclap{(k_{1},k_{2})\neq(0,0)}
		}} e^{-\pi R\parentheses{ k_{1}^{2}+k_{2}^{2} / (\Im\omega)^{2} }/4}
		\ll R \sum_{k=1}^{\infty} \mathop{\sum\sum}_{\substack{
			k_{1},k_{2}\in\ZZ \\
			k_{1}^{2}+k_{2}^{2} = k
		}} e^{-\pi Rk / \max\braces{ 4, 4(\Im\omega)^{2} }} \\ &
		\ll e^{-\pi R/\max\braces{ 8, 8(\Im\omega)^{2} }}
		\ll_{\epsilon,\omega} e^{-x^{\epsilon}}.
	\end{align*}
	To bound $\Sigma_{0}$, we put $r = \max\braces{ 1, 2\abs{\Re\omega} }$.
	By assumption, $\norm{\vartheta}_{\omega} \geq x^{\epsilon}/\sqrt{R}$, so that we either have $\norm{\ImO{\omega}\vartheta} \geq x^{\epsilon} / (r\sqrt{R})$, or $\norm{\ImO{\omega}\vartheta} < x^{\epsilon} / (r\sqrt{R})$ \emph{and} $\norm{\ReO{\omega}\vartheta} \geq x^{\epsilon} / \sqrt{R}$. In the former case,
	\[
		\Sigma_{0}
		\ll R e^{-\pi R\kappa_{1}^{2}}
		\ll R e^{-\pi x^{2\epsilon}/r^{2}}
		\ll_{\epsilon,\omega} R e^{-x^{\epsilon}}.
	\]
	In the latter case, we first note that $2\Re\omega$ is an integer (see \cref{lem:AlgebraicFacts}~\cref{enu:2ReOmegaIsIntegral}),
	so that
	\[
		\kappa_{2}(0)
		= \norm{ \ReO{\omega}\vartheta - \parentheses{(2\Re\omega)(\ImO{\omega}\vartheta) - \norm{\ImO{\omega}\vartheta} \Re\omega} }
		= \norm{ \ReO{\omega}\vartheta - \parentheses{\norm{\ImO{\omega}\vartheta} \Re\omega} }
	\]
	and, hence,
	\[
		\Sigma_{0}
		\ll R e^{-\pi R\kappa_{2}(0)^{2}/(\Im\omega)^{2}}
		\ll R e^{-\pi x^{2\epsilon}/(2\Im\omega)^{2}}
		\ll_{\epsilon} R e^{-x^{\epsilon}}.
	\]
	Recalling that the original sum under consideration, $\Sigma$, is $\Sigma_{0} + \Sigma_{*}$, the assertion of the lemma follows.
\end{proof}

\subsection{Cutting off}
\begin{lem}
	\label{lem:CuttingOff}
	Consider the sum
	\[
		\Sigma = \sideset{}{^{*}}\sum_{m\in\integers} \sum_{n\in\integers} a_{m} b_{n} (w(mn)-\tilde{w}(mn)),
	\]
	where $a_{m}$ and $b_{n}$ are arbitrary complex coefficients satisfying $\abs{a_{m}} \leq 1$, $a_{0} = b_{0} = 0$, $\abs{b_{n}} \leq \#\braces{\text{divisors of }n}$, and $\sum_{m}^{*}$ means some arbitrary restriction on the summation over $m\in\integers$. Then, for every $A, \epsilon > 0$,
	\[
		\abs{\Sigma} \ll_{A,\epsilon,\omega} \delta^{2} \sideset{}{^{*}}\sum_{\substack{
			j\in\integers\setminus\braces{0} \\
			\Norm(j)\leq\delta^{-2}x^{\epsilon}}} 
		 \abs[\Big]{\sum_{m\in\integers} 		
			\sum_{n\in\integers} a_mb_{n} f_{N}(mn) \e{ \ImO{\omega} \parentheses{jmn\alpha} } }
		+ \delta^{2} x^{-A}.
	\]
\end{lem}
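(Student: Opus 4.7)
The plan is to apply Poisson summation to the factors $W_\delta$ defining $\tilde w$. Using the Poisson identity
\[
W_\delta(\vartheta) = \delta \sum_{k\in\ZZ} e^{-\pi\delta^2 k^2} \e{k\vartheta}
\]
already noted in \cref{sec:HypothesesForSmoothedVersion} (and applied with $\vartheta_1 = \ImO{\omega}(z\alpha)$ and $\vartheta_2 = \ReO{\omega}(z\alpha) + \xi_2\ImO{\omega}(z\alpha)$), one expands
\[
W_\delta(\vartheta_1)\,W_\delta(\vartheta_2) = \delta^2 \sum_{(j_1,j_2)\in\ZZ^2} e^{-\pi\delta^2(j_1^2+j_2^2)} \e{j_1\vartheta_1 + j_2\vartheta_2}.
\]
The $(0,0)$-term contributes $\delta^2$ which, multiplied by $f_N(z)$, exactly cancels $w(z)$. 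By~\cref{Imcomp} (taken with $\ell = j_1+j_2\omega$ and $\rho = z\alpha$), the linear form $j_1\vartheta_1 + j_2\vartheta_2$ coincides with $\ImO{\omega}(jz\alpha)$ where $j = j_1+j_2\omega\in\integers$. Substituting this back into $\Sigma$, interchanging summations (permissible thanks to the strong absolute convergence provided by the Gaussian weight) and applying the triangle inequality yields
\[
\abs{\Sigma} \leq \delta^2 \sum_{j\in\integers\setminus\braces{0}} e^{-\pi\delta^2(j_1^2+j_2^2)} \abs{T(j)},
\]
with $T(j) = \sideset{}{^{*}}\sum_m \sum_n a_m b_n f_N(mn) \e{\ImO{\omega}(jmn\alpha)}$.

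I would then split the $j$-sum at $\Norm(j) \leq \delta^{-2} x^\epsilon$. For $j$ in this range the Gaussian factor is at most $1$ and, after the triangle inequality, one obtains exactly the stated main term. For the tail $\Norm(j) > \delta^{-2}x^\epsilon$, the elementary comparison $\Norm(j) = \abs{j_1+j_2\omega}^2 \ll_\omega j_1^2 + j_2^2$ gives $\delta^2(j_1^2+j_2^2) \gg_\omega x^\epsilon$, so the factorisation
\[
e^{-\pi\delta^2(j_1^2+j_2^2)} \ll_\omega e^{-c_\omega x^\epsilon} \cdot e^{-\pi\delta^2(j_1^2+j_2^2)/2}
\]
(for some $c_\omega>0$) separates a super-polynomially small prefactor from a residual Gaussian sum $\sum_{(j_1,j_2)\in\ZZ^2} e^{-\pi\delta^2(j_1^2+j_2^2)/2} \ll \delta^{-2}$. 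A trivial bound $\abs{T(j)} \leq \sum_{m,n}\abs{a_m b_n}\,f_N(mn)$, together with $d_3\leq d^2$ pointwise on $\integers$, dyadic decomposition and \cref{lem:CombinedDivisorBound}~\cref{enu:DivisorAverageBounds}, yields $\sup_j\abs{T(j)} \ll_\epsilon N(\log N)^{O(1)} \ll_\epsilon x^{1+\epsilon}$. Hence the tail contribution is $\ll_{\epsilon,\omega} x^{1+\epsilon}\,e^{-c_\omega x^\epsilon}$ which, using the lower bound $\delta \geq x^{-1000}$ from~\cref{eq:DeltaBounds}, is $\ll_{A,\epsilon,\omega} \delta^2 x^{-A}$ for every $A>0$ after re-labelling $A$.

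There is no substantial technical obstacle beyond careful bookkeeping: once the Poisson expansion is written out, matching the Fourier frequencies $(j_1,j_2)\in\ZZ^2$ with algebraic integers $j\in\integers$ reduces the problem to a pointwise separation of the exponential weight into a part producing the main term (for $\Norm(j)\leq \delta^{-2}x^\epsilon$) and a tail whose super-polynomial decay is easily absorbed into the error. The one point demanding a moment of attention is the verification that the shape of the inner exponential agrees under the identification $\ZZ^2\cong\integers$ with the form $\ImO{\omega}(jmn\alpha)$ appearing in the lemma, which is precisely the content of the calculation recorded in \cref{subsec:TransformingArg}.
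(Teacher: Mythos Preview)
Your proof is correct and follows essentially the same route as the paper: Poisson-expand the two factors $W_\delta$, use~\cref{Imcomp} to rewrite the phase as $\ImO{\omega}(jz\alpha)$ with $j=j_1+j_2\omega$, subtract the $(0,0)$-term against $w$, and split the $j$-sum at $\Norm(j)\le\delta^{-2}x^{\epsilon}$ with a trivial bound on the tail. The only cosmetic differences are that the paper bounds the inner sum more crudely by $x^5$ (rather than your $x^{1+\epsilon}$ via divisor averages) and organises the tail cut-off in terms of $\max\{|j_1|,|j_2|\}$ rather than $\Norm(j)$ directly; neither affects the argument.
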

\begin{proof}
	Recalling~\cref{Imcomp}, we have
	\begin{align*}
		\tilde{w}(z) - w(z) &
		= f_{N}(z) \delta^{2} \mathop{\sum\sum}_{\substack{
			j_{1},j_{2}\in\ZZ \\
			\mathclap{ (j_{1},j_{2}) \neq (0,0) }
		}}
		\begin{multlined}[t]
			e^{-\pi\delta^{2}(j_{1}^{2}+j_{2}^{2})} \times{} \\
			\quad \times \e{ j_{1}\ImO{\omega}(z\alpha) + j_{2}\parentheses{ \ReO{\omega}(z\alpha) + \xi_{2}\ImO{\omega}(z\alpha) } }
		\end{multlined} \\ &
		= f_{N}(z) \delta^{2} \mathop{\sum\sum}_{\substack{
			j_{1},j_{2}\in\ZZ \\
			\mathclap{ (j_{1},j_{2}) \neq (0,0) }
		}} e^{-\pi\delta^{2}(j_{1}^{2}+j_{2}^{2})} \e{\ImO{\omega}\parentheses{ (j_{1}+j_{2}\omega)z\alpha }}.
	\end{align*}
	Consequently,
	\begin{equation}\label{eq:cutoff}
		\abs{\Sigma}
		\leq \delta^{2} \mathop{\sum\sum}_{\substack{
			j_{1},j_{2}\in\ZZ \\
			\mathclap{ (j_{1},j_{2})\neq(0,0) } \\
			\mathclap{ j\coloneqq j_{1}+j_{2}\omega }
		}} e^{-\pi\delta^{2}(j_{1}^{2}+j_{2}^{2})} \abs[\Big]{ \sideset{}{^{*}}\sum_{m\in\integers} a_m 
			\sum_{n\in\integers} b_{n} f_{N}(mn) \e{ \ImO{\omega}\parentheses{jmn\alpha} }
		}.
	\end{equation}
	The inner-most sum over $n$ is bounded by $e^{-\pi\Norm(m)/N}$ multiplied by
	\[
		\sum_{n\in\integers} \abs{b_{n}} e^{-\pi\Norm(n)/N}
		\ll \sum_{r=1}^{\infty} e^{-\pi r/N} \sum_{\substack{ n\in\integers \\ \Norm(n)=r }} \abs{b_{n}}
		\ll \sum_{r=1}^{\infty} e^{-\pi r/N} r^{2}
		\ll N^{3}.
	\]
	Thus,
	\[
		\abs[\Big]{\sideset{}{^{*}}\sum_{m\in\integers} 
			\sum_{n\in\integers} a_mb_{n} f_{N}(mn) \e{\ImO{\omega}\parentheses{jmn\alpha}}
		}
		\ll N^{5}
		\ll x^{5}.
	\]
	A short computation shows that
	\[
		\Norm(j_{1}+j_{2}\omega) \leq \delta^{-2}x^{\epsilon}
		\implies
		\max\braces{\abs{j_{1}},\abs{j_{2}}} \leq c \delta^{-1} x^{\epsilon/2},
	\]
	where $c = (1+\abs{\Re\omega}) / \abs{\Im\omega}$. Consequently,
	\begin{align*}
		\smash{\mathop{\sum\sum}_{\substack{
			j_{1},j_{2}\in\ZZ \\
			\max\braces{\abs{j_{1}},\abs{j_{2}}} > c\delta^{-1}x^{\epsilon/2}
		}}} e^{-\pi\delta^{2}(j_{1}^{2}+j_{2}^{2})} &
		\ll \sum_{\substack{ r=1 \\ r>c^{2}\delta^{-2}x^{\epsilon} }}^{\infty} r e^{-\pi\delta^{2}r}
		\ll \delta^{-4} x (1+c^{2}) e^{-\pi c^{2} x^{2\epsilon}} \\ &
		\ll_{A,\epsilon,\omega} \delta^{2} x^{-A-5}.
	\end{align*}
	Hence, we obtain the claimed result from~\cref{eq:cutoff} after estimating the contribution from all terms with $\Norm(j) > \delta^{-2} x^{\epsilon}$ via the above and using the trivial inequality $e^{-\pi\delta^{2} (j_{1}^{2} + j_{2}^{2})} \leq 1$ on the remaining terms.
\end{proof}

\subsection{Type~I estimates}
\begin{prop}[Type~I estimate]
	\label{prop:Type-I-1}
	Consider the sum $\Sigma$ from \cref{lem:CuttingOff}
	with $b_{n}=1$ for all $n\in\integers\setminus\braces{0}$ and
	\[
		\sideset{}{^{*}}\sum_{m}
		= \sum_{ 0 < \Norm(m) \leq M }
	\]
	for some positive $M$.
	Then
	\[
		\abs{\Sigma}
		\ll_{C,\epsilon,\omega} \delta^{2} N \cdot x^{5\epsilon} \parentheses{
			\abs{q}^{2} x^{-1}
			+ \delta^{-2} \abs{q}^{-2}
			+ \delta^{-2} M x^{-1}
		}.
	\]
\end{prop}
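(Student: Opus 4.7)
The plan is to combine three ingredients from \cref{sec:Smoothed}: \cref{lem:CuttingOff} (to Fourier-expand the weight difference $w-\tilde{w}$), \cref{lem:PoissonResult-1} (to execute the inner sum over $n$), and \cref{thm:Diophantine} (to count pairs satisfying a Diophantine condition). Applying \cref{lem:CuttingOff} first reduces the estimation of $\abs{\Sigma}$, up to an error of size $\delta^2 x^{-A}$ which is negligible for any fixed $A$, to that of
\[
	\delta^2 \sum_{0<\Norm(j)\leq\delta^{-2}x^\epsilon} \abs[\Big]{ \sum_{0<\Norm(m)\leq M} a_m \sum_{n\in\integers} f_N(mn)\e{\ImO{\omega}(jmn\alpha)} }.
\]

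For each fixed $j,m$, the identity $f_N(mn)=f_{N/\Norm(m)}(n)$ together with \cref{lem:PoissonResult-1} (applied with $R=N/\Norm(m)$ and $\vartheta=jm\alpha$) bounds the inner sum over $n$ by $\ll_{\epsilon,\omega}(N/\Norm(m))\mathbf{1}[\norm{jm\alpha}_\omega<x^\epsilon\sqrt{\Norm(m)/N}]$ up to a globally negligible term of order $(N/\Norm(m))e^{-x^\epsilon}$; the tail $\Norm(m)>N$ is absorbed into the error by Gaussian decay of $f_N$. I would then dyadically decompose $\Norm(m)\asymp M_0$ for $1\leq M_0\leq M$, so that on each shell the threshold becomes $\Delta_0\asymp x^\epsilon\sqrt{M_0/N}$ and the weight is $\asymp N/M_0$. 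Setting $k=jm$ and noting that each $k$ admits $\ll_\epsilon x^\epsilon$ factorisations as such a pair (via \cref{lem:CombinedDivisorBound}), the number of relevant pairs on the shell is at most $x^\epsilon H_\alpha(2M_0\delta^{-2}x^\epsilon,\Delta_0)$. By \cref{thm:Diophantine} this is in turn $\ll_{C,\omega} (1+M_0\delta^{-2}x^\epsilon\abs{q}^{-2})(1+x^{2\epsilon}M_0\abs{q}^2/N)$. Expanding, weighting by $N/M_0$, summing dyadically over $M_0\leq M$, and substituting $N=x^{1-\epsilon}$ produces (after absorbing logarithms into $x^{O(\epsilon)}$) precisely the three summands $\delta^2\abs{q}^2 N/x$, $N\abs{q}^{-2}$, and $M$ of the target bound, together with one extraneous contribution $\asymp \delta^2 N x^{O(\epsilon)}$ coming from the constant ``$1\cdot 1$'' part of the $H_\alpha$ estimate.

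The principal obstacle is this extraneous $\delta^2 N x^{O(\epsilon)}$ term, which does not manifestly fit into the target. To dispose of it I would perform a case analysis on $\abs{q}$: the vanishing clause of \cref{thm:Diophantine} kills $H_\alpha$ on the small-$M_0$ shells for $\abs{q}^2$ lying in an intermediate range, thereby eliminating the term in that regime; outside this range (that is, for $\abs{q}^2$ either rather small or quite large relative to appropriate powers of $x$), a direct comparison using the hypothesis $\delta\leq\tfrac{1}{2}$ shows that the orphan is dominated by either $\delta^{-2}N\abs{q}^{-2}x^{O(\epsilon)}$ or $\delta^2\abs{q}^2 N x^{-1+O(\epsilon)}$ respectively, both of which are present in the target.
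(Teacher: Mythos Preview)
Your proposal is correct and follows essentially the same route as the paper: apply \cref{lem:CuttingOff}, then \cref{lem:PoissonResult-1} to the $n$-sum, dyadically decompose in $\Norm(m)$, merge $k=jm$ via a divisor bound, and invoke \cref{thm:Diophantine}. The only substantive difference is in how the ``orphan'' $1\cdot 1$ contribution is disposed of. You propose a three-way case analysis on $\abs{q}$, using the vanishing clause of \cref{thm:Diophantine} in the intermediate regime and direct comparison elsewhere. The paper instead applies the vanishing clause uniformly to establish a lower cut-off $L_0 \asymp_{C,\omega} \min\{x^{1-3\epsilon}\abs{q}^{-2},\,\delta^2\abs{q}^2\}$ on the dyadic parameter; then the orphan is simply $\ll L_0^{-1} \ll_{C,\omega} \abs{q}^2 x^{-1+3\epsilon} + \delta^{-2}\abs{q}^{-2}$, which is already a sum of two target terms. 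This works for \emph{all} $q$ at once (if $L_0<1$ the inequality $L^{-1}\leq 1 \leq L_0^{-1}$ is trivial), so your case split is unnecessary---though not wrong. In effect your three cases are just the three ways $L_0^{-1}$ can be realised.
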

\begin{proof}
	By \cref{lem:CuttingOff} we have
	\[
		\abs{\Sigma}
		\ll_{\epsilon,\omega} \delta^{2} \sum_{\substack{
			j \in \integers\setminus\braces{0} \\
			\Norm(j)\leq\delta^{-2}x^{\epsilon}
		}} \sum_{0<\Norm(m)\leq M} \abs[\Big]{
			\sum_{n\in\integers} f_{N/\Norm(m)}(n) \e{\ImO{\omega}\parentheses{jmn\alpha}}
		} + Mx^{\epsilon}.
	\]
	Therefore, by \cref{lem:PoissonResult-1},
	\[
		\abs{\Sigma}
		\ll_{\epsilon,\omega} \delta^{2} N \mathop{\sum\sum}_{\substack{
			0<\Norm(j) \leq \delta^{-2}x^{\epsilon} \\
			0<\Norm(m) \leq M \\
			\mathclap{ \norm{jm\alpha}_{\omega} < \sqrt{\Norm(m)/x^{1-3\epsilon}} }
		}} (\Norm(m))^{-1} + Mx^{\epsilon}.
	\]
	Moreover, by the second part of \cref{thm:Diophantine}, we find that no terms with $\Norm(m) <  L_{0}$ contribute to the above sum if we set
	\[
		L_0\coloneqq \min\braces*{\frac{x^{1-3\varepsilon}}{(4\abs{q\omega})^2},\frac{\delta^{2}\abs{q}^2}{(12C\abs{\omega}^2)^2} }.
	\]
	Then, upon splitting the summation over $m$ into dyadic annuli, we obtain
	\[
		\abs{\Sigma}
		\ll_{\epsilon,\omega} \delta^{2} N (\log x) \sup_{L_{0}\leq L\leq M} L^{-1} \mathop{\sum\sum}_{\substack{
			0 < \Norm(j) \leq \delta^{-2}x^{\epsilon} \\
			L/2 < \Norm(m) \leq L \\
			\mathclap{ \norm{jm\alpha}_{\omega} < \sqrt{L/x^{1-3\epsilon}} }
		}} 1 + Mx^{\epsilon}.
	\]
	Finally, using \cref{thm:Diophantine} and $N=x^{1-\varepsilon}$,
	\begin{align*}
		\abs{\Sigma} &
		\ll_{\epsilon,\omega} \delta^{2} N x^{\epsilon} \sup_{L_{0} \leq L \leq M} \parentheses{ L^{-1} + \delta^{-2} x^{\epsilon} \abs{q}^{-2} } \parentheses{ 1 + L \abs{q}^{2} x^{-1+3\epsilon} } + Mx^{\epsilon} \\ &
		\ll_{C,\epsilon,\omega} \delta^{2} N x^{5\epsilon} \parentheses{ \abs{q}^{2} x^{-1} + \delta^{-2} \abs{q}^{-2} + \delta^{-2} M x^{-1} }.
		\qedhere
	\end{align*}
\end{proof}

\subsection{Type~II estimates}
\begin{prop}[Type~II estimate]
	\label{prop:Type-II-1}
	Consider the sum $\Sigma$ from \cref{lem:CuttingOff} with
	\[
		\sideset{}{^{*}}\sum_{m}
		= \sum_{x^{\mu} < \Norm(m) \leq x^{\mu+\kappa}},
	\]
	for some $\mu,\kappa\in(0,1)$. Then, for any $\epsilon\in(0,\mu)$,
	\begin{align*}
		\abs{\Sigma} &
		\ll_{C,\epsilon,\omega} \delta^{2} N \cdot x^{7\epsilon} (
			\delta^{-2} \min\{x^{-1/4},x^{(\mu+\kappa-1)/2}\} \\ &
			+ \delta^{-2} \abs{q}^{-1}
			+ \delta^{-1} \abs{q} x^{-1/2}
			+ \delta^{-1} x^{-\mu/2}
			+\delta^{-1}x^{(\mu+\kappa-1)/2}
		).
	\end{align*}
\end{prop}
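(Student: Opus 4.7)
My plan is to invoke \cref{lem:CuttingOff} first, which reduces the estimation of $\Sigma$ to bounding $\delta^{2} \sum_{0 < \Norm(j) \leq J} \abs{T_j}$ up to a negligible tail, where $J \coloneqq \delta^{-2} x^{\epsilon}$ and
\[
	T_j \coloneqq \sum_{x^\mu < \Norm(m) \leq M} a_m \sum_{n\in\integers} b_n f_N(mn)\, \e{\ImO{\omega}(jmn\alpha)}
	\qquad (M \coloneqq x^{\mu+\kappa}).
\]

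Next, for fixed $j$, I would apply Cauchy--Schwarz in the $m$-variable. Using $\abs{a_m}\le 1$ and the lattice-point bound of \cref{lem:AlgebraicFacts}~\cref{enu:LatticeCount:UpperBound}, this yields $\abs{T_j}^{2} \ll M \cdot S_j$ with
\[
	S_j = \sum_{x^\mu < \Norm(m) \leq M} \abs[\Big]{\sum_{n\in\integers} b_n f_N(mn)\, \e{\ImO{\omega}(jmn\alpha)}}^{2}.
\]
To avoid the sharp cut-off in $m$, I majorise $\boldsymbol{1}_{\braces{\Norm(m)\le M}} \le e^{\pi} f_M(m)$, expand the square, and swap the order of summation to get
\[
	S_j \ll \sum_{n,\tilde n\in\integers} b_n \overline{b_{\tilde n}} \sum_{m\in\integers} f_{R(n,\tilde n)}(m)\, \e{\ImO{\omega}(j(n-\tilde n)m\alpha)},
\]
where $R(n,\tilde n)^{-1} = M^{-1} + (\Norm(n)+\Norm(\tilde n))/N$. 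The inner $m$-sum is now in the exact shape handled by \cref{lem:PoissonResult-1}: it contributes $\ll R(n,\tilde n)$ when $\norm{j(n-\tilde n)\alpha}_\omega < x^{\epsilon}/\sqrt{R(n,\tilde n)}$, and is exponentially small otherwise.

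I would then separate the diagonal $n = \tilde n$ from the off-diagonal $n \ne \tilde n$. Splitting the $n$-range into the regimes $\Norm(n) \leq N/M$ (where $R(n,n) \asymp M$) and $\Norm(n) > N/M$ (where $R(n,n) \asymp N/\Norm(n)$), and using the divisor bound of \cref{lem:CombinedDivisorBound}~\cref{enu:DivisorBound}, the diagonal contribution to $S_j$ is $\ll N x^{O(\epsilon)}$; this produces, after taking square roots and summing $\delta^{2}\sum_j$ trivially, the main term $\delta^{2} N \cdot \delta^{-2} x^{(\mu+\kappa-1)/2} \cdot x^{O(\epsilon)}$. For the off-diagonal, I would dyadically decompose the ranges of $\Norm(n)$ and $\Norm(\tilde n)$ so that $R(n,\tilde n)$ is essentially constant on each piece, substitute $\nu = n-\tilde n$, and---for each $j$---count the $\ell \coloneqq j\nu$ satisfying $\norm{\ell\alpha}_\omega < x^{\epsilon}/\sqrt{R}$ via \cref{thm:Diophantine}, keeping track of the multiplicity of factorisations $\ell = j\nu$ by standard divisor-type bounds. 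The two factors $1 + x \abs{q}^{-2}$ and $1 + \Delta^{2} \abs{q}^{2}$ from \cref{thm:Diophantine}, together with the subsequent $\sum_j$, then produce the remaining four summands $\delta^{-2}\abs{q}^{-1}$, $\delta^{-1}\abs{q} x^{-1/2}$, $\delta^{-1} x^{-\mu/2}$ and $\delta^{-1} x^{(\mu+\kappa-1)/2}$ in the statement.

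The sharper bound $\delta^{-2} x^{-1/4}$ (which improves on $\delta^{-2} x^{(\mu+\kappa-1)/2}$ exactly when $M \ge \sqrt{x}$) I expect to obtain by running the symmetrically dual argument: Cauchy--Schwarz applied in the $n$-variable instead (using $\abs{b_n} \le d(n\integers)$ and the effective cut-off $\Norm(n) \lesssim N/x^{\mu}$ supplied by $f_N$), followed by the same application of \cref{lem:PoissonResult-1} on the resulting Gaussian-weighted $m$-sum and the same counting via \cref{thm:Diophantine}. Taking the minimum of both bounds yields the $\min\braces{x^{-1/4},\,x^{(\mu+\kappa-1)/2}}$ appearing in the statement. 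The hardest part, I anticipate, will be the dyadic bookkeeping: because the effective Gaussian scale $R(n,\tilde n)$ varies with the summation variables, \cref{thm:Diophantine} has to be invoked separately in several regimes, and each of the resulting contributions carefully weighed against the $\delta^{-2}$ growth stemming from $\Norm(j)\leq J$, so that every term lands on precisely one of the five summands in the target bound.
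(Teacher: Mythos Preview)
Your overall strategy---Cauchy--Schwarz, Gaussian extension, Poisson via \cref{lem:PoissonResult-1}, then \cref{thm:Diophantine}, plus a dual argument---is the right one, but the execution differs from the paper's at two structural points, and one of them is a genuine gap.

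First, the paper does not work with the varying scale $R(n,\tilde n)$. Instead it dyadically localises $m$ to $L<\Norm(m)\le 2L$, truncates $n$ to $\Norm(n)\le x/L$ (at negligible cost; this step is essential---your Gaussian majorisation $\boldsymbol{1}_{\braces{\Norm(m)\le M}}\le e^{\pi}f_M(m)$ discards the lower bound $\Norm(m)>x^{\mu}$, and without a prior cutoff on $n$ your diagonal $\sum_n\abs{b_n}^2R(n,n)$ diverges), and then \emph{removes} $f_N(mn)$ altogether by writing it as an inverse Mellin transform, so that the bilinear form carries separated coefficients $a_m\abs{m}^{-s}$ and $b_n\abs{n}^{-s}$ on a vertical line $\Re s=1/\log x$.

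Second---and this is where your outline actually loses a factor---the paper applies Cauchy--Schwarz jointly in the pair $(j,m)$, then \emph{merges} $k=jm$ into a single variable and extends the $k$-sum to a Gaussian $f_{L'}$ with $L'=JL$ before invoking \cref{lem:PoissonResult-1}. The resulting Poisson cutoff is $x^{\epsilon}/\sqrt{JL}$, and the Diophantine count is applied directly to $\ell=n_1-n_2$ with that small threshold. Your plan applies Poisson to the $m$-sum alone, giving the larger cutoff $x^{\epsilon}/\sqrt{R}$ (independent of $J$), and then sums over $j$ ``trivially'' after the square root. That trivial summation costs a factor $\sqrt{J}\asymp\delta^{-1}$ in the off-diagonal terms: for instance your $\abs{q}^{-1}$-term comes out as $\delta^{-1}N/\abs{q}$ rather than the required $N/\abs{q}$. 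The remark about ``multiplicity of factorisations $\ell=j\nu$'' is really a statement about $\sum_j S_j$, which only helps if you apply a \emph{second} Cauchy--Schwarz in $j$---and doing so reconstitutes exactly the paper's joint $(j,m)$-Cauchy. So the cleanest fix is to adopt the paper's ordering: Mellin-separate first, then Cauchy in $(j,m)$, then merge $k=jm$.
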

\begin{proof}
	By \cref{lem:CuttingOff} we have
	\[
		\abs{\Sigma}
		\ll_{A,\epsilon,\omega} \delta^{2} \sum_{\substack{
			j\in\integers\setminus\braces{0} \\
			\Norm(j) \leq \delta^{-2} x^{\epsilon}
		}} \abs[\Big]{ \mathop{\sum\sum}_{\substack{ x^{\mu}<\Norm(m)\leq x^{\mu+\kappa} \\ n\in\integers }} a_mb_{n} f_{N}(mn) \e{\ImO{\omega}\parentheses{jmn\alpha}}
		} + \delta^{2} x^{-A}.
	\]
	Upon splitting the summation over $m$ into dyadic annuli, we obtain
	\begin{align*}
		\abs{\Sigma} \ll_{A,\epsilon,\omega} &
		\delta^{2} x^{-A} +
		\delta^{2} (\log x) \times{} \\ &\times\sup_{ x^{\mu} \leq L \leq x^{\mu+\kappa} } 
	        \sum_{\substack{ j\in\integers\setminus\braces{0} \\ \Norm(j) \leq \delta^{-2} x^{\epsilon} }} \abs[\Big]{ \mathop{\sum\sum}_{\substack{ L<\Norm(m)\leq 2L \\ n\in\integers }} a_m b_n f_{N}(mn) \e{\ImO{\omega}\parentheses{jmn\alpha}}
		}.
	\end{align*}
	By similar arguments as in \cref{sec:Removing-the-weights}, we see that one can restrict the summation over $n$ to 
	$\Norm(n) \leq x/L$ at the cost of an error $\ll_{A,\epsilon,\omega} \delta^{2}x^{-A}$.
	Thus,
	\begin{equation}\label{eq:SigmaDyadic}
		\abs{\Sigma}
		\ll_{A,\epsilon,\omega} \delta^{2} (\log x) \sup_{ x^{\mu} \leq L \leq x^{\mu+\kappa} } \Sigma_{L} + \delta^{2}x^{-A},
	\end{equation}
	where
	\[
		\Sigma_{L}
		=  \sum_{\substack{j\in\integers\setminus\braces{0} \\
			\Norm(j) \leq \delta^{-2} x^{\epsilon}
		}} 
		\abs[\Big]{ \mathop{\sum\sum}_{\substack{ L<\Norm(m)\leq 2L \\ 0<\Norm(n)\leq x/L }} a_mb_{n} f_{N}(mn) \e{\ImO{\omega}\parentheses{jmn\alpha}}
		}.
	\]
	We write
	\[
		\Sigma_{L}
		=  \sum_{\substack{j\in\integers\setminus\braces{0} \\
			\Norm(j) \leq \delta^{-2} x^{\epsilon}
		}} 
		c_j \mathop{\sum\sum}_{\substack{ L<\Norm(m)\leq 2L \\ 0<\Norm(n)\leq x/L }} a_mb_{n} f_{N}(mn) \e{\ImO{\omega}\parentheses{jmn\alpha}}
	\]
	with $c_j$ being suitable complex coefficients satisfying $\abs{c_j}=1$. 
	
	Next, we remove the factor $f_N(mn)$ by writing the Gaussian as an inverse Mellin transform in the form
	\[
		e^{-x^2}=\frac{1}{2\pi i}\cdot {\int\limits_{c-i\infty}^{c+i\infty} x^{-s} \cdot \frac{\Gamma(s/2)}{2} \dd{s}},
	\]
	where $c>0$. This implies
	\[
		f_{N}(mn)=\frac{1}{4\pi i}\cdot \int\limits_{c-i\infty}^{c+i\infty} \parentheses*{\frac{\sqrt{\pi}\abs{mn}}{\sqrt{N}}}^{-s} 
		\Gamma\parentheses*{\frac{s}{2}} \dd{s}
	\]
	and hence
	\begin{equation} \label{sigmaLs}
	\Sigma_{L}
		=  \frac{1}{4\pi i}\cdot {\int\limits_{c-i\infty}^{c+i\infty} \pi^{-s/2}\Gamma\parentheses*{\frac{s}{2}}N^{s/2} \Sigma_L(s) \dd{s}},
	\end{equation}
	where 
	\[
		\Sigma_L(s)\coloneqq \sum_{\substack{j\in\integers\setminus\braces{0} \\
			\Norm(j) \leq \delta^{-2} x^{\epsilon}
		}} 
		c_j \mathop{\sum\sum}_{\substack{ L<\Norm(m)\leq 2L \\ 0<\Norm(n)\leq x/L }} a_m(s)b_{n}(s) \e{\ImO{\omega}\parentheses{jmn\alpha}}
	\]
	with 
	$a_m(s)\coloneqq a_m\abs{m}^{-s}$ and $b_n(s)\coloneqq b_n\abs{n}^{-s}$.
%	\[
%		a_m(s)\coloneqq a_m\abs{m}^{-s} \quad \mbox{and} \quad b_n(s)\coloneqq b_n\abs{n}^{-s}.
%	\]
	We set $c \coloneqq 1/\log x$.
	Then $N^{s/2} = O(1)$ and 
	\[
		a_m(s)\ll \abs{a_m} \quad \mbox{and} \quad b_n(s)\ll \abs{b_n}
	\]
	for all $s$ with $\Re s=c$ and $m$ and $n$ in the relevant summation ranges. 
	
	Now we estimate $\Sigma_L(s)$.
	In the following we tacitly assume that $x^{\mu} \leq L \leq x^{\mu+\kappa}$.
	By Cauchy's inequality,
	\begin{equation*} 
		\abs{\Sigma_{L}(s)}^{2}
		\ll \delta^{-2} x^{\epsilon} L \sum_{\substack{j\in\integers\setminus\braces{0} \\
			\Norm(j) \leq \delta^{-2} x^{\epsilon}
		}} 
		 \sum_{L<\Norm(m)\leq 2L}\ \abs[\Big]{
			\sum_{0<\Norm(n)\leq x/L} b_{n}(s) \e{\ImO{\omega}\parentheses{jmn\alpha}}
		}^{2},
	\end{equation*}
	which implies 
	\begin{equation} \label{afterCauchy}
		\abs{\Sigma_{L}(s)}^{2}
        	\ll \delta^{-2} x^{2\epsilon} L \sum_{\substack{k\in\integers\setminus\braces{0}\\ \Norm(k)\leq 2x^{\epsilon}L\delta^{-2}}}
		\abs[\Big]{
		\sum_{0<\Norm(n)\leq x/L} b_{n}(s)\e{\ImO{\omega}\parentheses{kn\alpha}}
		}^{2}.
	\end{equation}
	Next, we use the uniform bound
	$\boldsymbol{1}_{\braces{\Norm(k)\leq 2L'}}$ ($k\in\integers$)
%	\[
%		\boldsymbol{1}_{\braces{\Norm(k)\leq 2L'}}
%		\ll f_{L'}(k)
%		\quad (k\in\integers)
%	\]
	for $L'\coloneqq x^{\epsilon}L\delta^{-2}$ to extend the summation over $k$, getting
	\[
		\abs{\Sigma_{L}(s)}^{2}
		\ll \delta^{-2} x^{2\epsilon} L \sum_{k\in\integers} f_{L'}(k) \abs[\Big]{
			\sum_{0<\Norm(n)\leq x/L} b_{n}(s) \e{\ImO{\omega}\parentheses{kn\alpha}}
		}^{2}.
	\]
	Upon expanding the square in $\abs{\sum_{n}\ldots}^{2}$,
	\[
		\abs{\Sigma_{L}(s)}^{2}
		\ll \delta^{-2} x^{2\epsilon} L \!\!\mathop{\sum\sum}_{\substack{
			0 < \Norm(n_{1}) \leq x/L \\
			0 < \Norm(n_{2}) \leq x/L
		}}
		b_{n_{1}}(s) \overline{b_{n_{2}}(s)}  \sum_{k\in\integers}f_{L'}(k)
			\e{\ImO{\omega}\parentheses{k(n_{1}-n_{2})\alpha}}.
	\]
	The subsum with $n_{1} = n_{2}$ is $\ll \delta^{-4} x^{1+4\epsilon} L$. Thus, on writing $\ell = j (n_{1}-n_{2})$,
	\begin{equation}\label{eq:SigmaL2}
		\abs{\Sigma_{L}(s)}^{2}
		\ll \delta^{-4} x^{1+4\epsilon} L + \delta^{-2} x^{3\epsilon} L \Sigma_{L}',
	\end{equation}
	where
	\[
		\Sigma_{L}'
		= \sum_{\substack{
			\ell\in\integers\setminus\braces{0} \\
			\Norm(\ell) \leq 4 x / L
		}} \mathop{\sum\sum}_{\substack{
			0 < \Norm(n_{1}) \leq x/L \\
			0 < \Norm(n_{2}) \leq x/L \\
			\ell = n_{1}-n_{2}
		}} \abs[\Big]{
			\sum_{k\in\integers} f_{L'}(k) \e{\ImO{\omega}\parentheses{m\ell\alpha}}
		}.
	\]	
	By \cref{lem:PoissonResult-1}, it follows that
	\begin{align*}
		\Sigma_{L}' &
		\ll_{A,\epsilon,\omega} L'\sum_{\substack{
			\ell\in\integers\setminus\braces{0} \\
			\Norm(\ell)\leq 4x/L \\
			\norm{\ell\alpha}_{\omega}<x^{\epsilon}/\sqrt{L'}
		}} \mathop{\sum\sum}_{\substack{
			0 < \Norm(n_{1}) \leq x/L \\
			0 < \Norm(n_{2}) \leq x/L \\
			\ell = n_{1}-n_{2}
		}} 1 + x^{-A} \\ &
		\ll_{A,\epsilon,\omega} x^{1+\epsilon} \delta^{-2} \sum_{\substack{
			\Norm(\ell) \leq 4 x/ L \\
			\norm{\ell\alpha}_{\omega} < x^{\epsilon}/ \sqrt{L'}
		}} 1 + x^{-A}.
	\end{align*}
	By \cref{thm:Diophantine},
	\begin{align*} 
		\Sigma_{L}' &
		\ll_{C,A,\epsilon,\omega} x^{1+\epsilon} \delta^{-2} \parentheses{
			1 + xL^{-1}\abs{q}^{-2}
		} \parentheses{
			1 + x^{2\epsilon} L'^{-1} \abs{q}^{2}
		} + x^{-A}\\ &
		\ll x^{1+2\epsilon}\delta^{-2}\parentheses[\big]{ 1+xL^{-1}\abs{q}^{-2}+\delta^{2}L^{-1} \abs{q}^{2}+\delta^{2}xL^{-2} }.
	\end{align*}
	Combining this with~\cref{eq:SigmaL2} and taking square root, we obtain the estimate
	\begin{equation} \label{firstsigmaL}
	 \abs{\Sigma_L(s)} \ll x^{3\varepsilon}\parentheses[\big]{ \delta^{-2}x^{1/2}L^{1/2}+\delta^{-2}x\abs{q}^{-1}+\delta^{-1}x^{1/2}\abs{q}+
	 \delta^{-1}xL^{-1/2} }. 
	\end{equation}
	
	We may reverse the roles of $m$ and $n$ and estimate $\abs{\Sigma_L(s)}^2$ by 
	\[
		\abs{\Sigma_L(s)}^2\ll \delta^{-2} x^{1+2\epsilon} L^{-1} \sum_{\substack{k\in\integers\setminus\braces{0}\\ 
		\Norm(k)\leq 2x^{1+\epsilon}L^{-1}\delta^{-2}}}
		\abs[\Big]{
			\sum_{0<\Norm(m)\leq L} a_{m}(s)\e{\ImO{\omega}\parentheses{km\alpha}}
		}^{2}
	\]
	in place of \eqref{afterCauchy}.
	Then we can continue in a similar way as above. We arrive at the same estimate as in \eqref{firstsigmaL} but with $L$ replaced by $x/L$, i.e.
	\begin{equation} \label{secondsigmaL}
		\abs{\Sigma_L(s)} \ll x^{3\varepsilon} \parentheses[\big]{ \delta^{-2}xL^{-1/2}+\delta^{-2}x\abs{q}^{-1}+\delta^{-1}x^{1/2}\abs{q}+
		\delta^{-1}x^{1/2}L^{1/2} }.
	\end{equation}
	Using \eqref{firstsigmaL} if $L\le x^{1/2}$ and \eqref{secondsigmaL} if $L>x^{1/2}$, and recalling 
	that $x^{\mu}\le L\le x^{\mu+\kappa}$, we deduce that
%	\begin{equation}\label{SigmaLest}
	\[
		\abs{\Sigma_L(s)} \ll \begin{multlined}[t]
			x^{3\varepsilon}(\delta^{-2}\min\{x^{3/4},x^{(\mu+\kappa+1)/2}\}
			+ \delta^{-2}x\abs{q}^{-1} +{} \\ 
			\shoveleft[1cm]{
				+ \delta^{-1}x^{1/2}\abs{q}
				+ \delta^{-1}x^{1-\mu/2}
				+ \delta^{-1}x^{(\mu+\kappa+1)/2}).
			}
        \end{multlined}
    \]
%	\end{equation}
	Using \eqref{sigmaLs} together with Stirling's approximation for the Gamma function, the same bound, up to a factor of $\log x$, 
	holds for $\Sigma_L$. 
	Plugging this into~\cref{eq:SigmaDyadic}, we obtain the assertion of the proposition.
\end{proof}

\subsection{Conclusion}
\begin{proof}[Proof of \cref{thm:ApproxWithPrimes}]
	We note that
	\[
		\lim_{R\to\infty} \sum_{\substack{ r\in\integers\\ \Norm(r)<R }} d_{4}(r\integers) \tilde{w}(r)
		\leq \lim_{R\to\infty} \sum_{\substack{ r\in\integers\\ \Norm(r)<R }} d_{4}(r\integers) w(r)
		\leq x^3
	\]
	for all sufficiently large $x$ depending on $\epsilon$. 
	(Mind though that the first two quantities depend implicitly on $x$ by means of the definitions of $\tilde{w}$, $w$, and $N$.) 
	Then, for $\mu\in(0,\frac{1}{2})$, choosing $\kappa=\frac{1}{2}$ and $M=2x^{\mu}$ in accordance with \cref{thm:HarmansSieve}, Propositions \ref{prop:Type-I-1} and
	\ref{prop:Type-II-1} give
	\begin{align*}
		\MoveEqLeft
		\abs{S(w,\sqrt{x}) - S(\tilde{w},\sqrt{x})} \\ &
		\ll_{C,\epsilon,\omega} \begin{multlined}[t]
			\delta^{2} N
			\cdot x^{7\epsilon} \bigl(
			\abs{q}^{2}x^{-1}
			+ \delta^{-2} \abs{q}^{-1} + \delta^{-1}\abs{q}x^{-1/2} +{}
			\\
			\delta^{-2} x^{-1/4} 
			+ \delta^{-1} x^{-\mu/2} + \delta^{-2} x^{-1+\mu} \bigr).
		\end{multlined}
	\end{align*}
	Upon taking $x = \abs{q}^{3}$ and $\mu=\frac{1}{4}$, we find
	\[
		\abs{S(w,\sqrt{x}) - S(\tilde{w},\sqrt{x})}
		\ll_{C,\epsilon,\omega} \delta^{2} N \cdot x^{-\epsilon}
	\]
	provided that $\tfrac{1}{2}\geq\delta \geq x^{-1/8+10\epsilon}$.
	By using \cref{cor:Detector}, the theorem follows.
\end{proof}

% ~~~~~~~~~~~~~~~~~~~~~~~~~~~~~~~~~~~~~~~~~~~~~~~~~~~~~~~
\section{Proof of the weighted version of Harman's sieve for \texorpdfstring{$\integers$}{O}}
\label{sec:HarmansSieveProof}

Before embarking on the proof of \cref{thm:HarmansSieve}, we record the following useful lemma:
\begin{lem}
	\label{lem:PerronVariant}
	For any two distinct real numbers $\rho,\gamma>0$ and $T\geq1$ one has
	\[
		\abs*{
			\ConditionalOne{\gamma<\rho}
			- \frac{1}{\pi} \int_{-T}^{T} e^{i\gamma t} \frac{\sin\parentheses{\rho t}}{t} \dd{t}
		} \ll \frac{1}{T \abs{\gamma-\rho}},
	\]
	where the implied constant is absolute.
\end{lem}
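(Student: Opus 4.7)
The plan is to reduce the claim to standard estimates for the sine integral by splitting $\sin(\rho t)$ into complex exponentials. Write
\[
	\frac{1}{\pi} \int_{-T}^{T} e^{i\gamma t} \frac{\sin(\rho t)}{t} \dd{t}
	= \frac{1}{2\pi i} \int_{-T}^{T} \frac{e^{i(\gamma+\rho)t}}{t} \dd{t}
	- \frac{1}{2\pi i} \int_{-T}^{T} \frac{e^{i(\gamma-\rho)t}}{t} \dd{t}.
\]
For real $u\neq0$, the cosine contribution in $e^{iut}/t$ is odd, so only the sine part survives, giving
\[
	\frac{1}{2\pi i} \int_{-T}^{T} \frac{e^{iut}}{t} \dd{t}
	= \frac{\sgn(u)}{\pi} \int_{0}^{T\abs{u}} \frac{\sin s}{s} \dd{s}.
\]

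Next I would use the standard estimate $\abs[\big]{\int_{0}^{X} \sin(s)/s \dd{s} - \pi/2} \ll 1/X$ (valid for all $X>0$; proved by an integration by parts on the tail $\int_X^\infty\sin(s)/s\dd{s}$), which yields
\[
	\frac{1}{2\pi i} \int_{-T}^{T} \frac{e^{iut}}{t} \dd{t}
	= \frac{\sgn(u)}{2} + O\parentheses*{\frac{1}{T\abs{u}}}.
\]
Applying this to $u=\gamma+\rho>0$ and to $u=\gamma-\rho\neq0$ (the latter being non-zero by hypothesis), the main term of our integral becomes
\[
	\frac{1}{2}\parentheses{1 - \sgn(\gamma-\rho)}
	= \ConditionalOne{\gamma<\rho},
\]
while the error is $O(1/(T(\gamma+\rho))) + O(1/(T\abs{\gamma-\rho}))$.

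Finally, since $\gamma,\rho>0$, we have $\gamma+\rho\geq\abs{\gamma-\rho}$, so the first error term is dominated by the second, giving the bound $\ll 1/(T\abs{\gamma-\rho})$ as claimed. The only non-routine input is the tail bound for the sine integral, but this is textbook and causes no real obstacle.
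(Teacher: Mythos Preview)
Your argument is correct. The paper does not actually prove this lemma: it simply cites \cite[Lemma~2.2]{harman2007primedetectingsieves}. Your self-contained derivation via the Dirichlet integral $\int_0^X \frac{\sin s}{s}\,ds = \frac{\pi}{2} + O(1/X)$ is the standard way this kind of estimate is established, and nothing is missing.

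One minor point of presentation: after you split $\sin(\rho t)$ into exponentials, the individual integrals $\int_{-T}^{T} e^{iut}/t\,dt$ acquire a (non-integrable) singularity at $t=0$ which was not present in the original integrand $e^{i\gamma t}\sin(\rho t)/t$. Your computation is still correct because you only ever use the even (sine) part of $e^{iut}/t$, which is continuous at the origin, and the odd (cosine) parts cancel between the two pieces anyway; but it would read more cleanly to first observe that the original integral is real and equal to $\frac{2}{\pi}\int_0^T \cos(\gamma t)\,\frac{\sin(\rho t)}{t}\,dt$, then apply the product-to-sum identity to reach the two sine integrals directly without any apparent singularity.
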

\begin{proof}
	See, for instance, \cite[Lemma~2.2]{harman2007primedetectingsieves}.
\end{proof}

\begin{proof}[Proof of \cref{thm:HarmansSieve}]
We follow~\cite{harman2007primedetectingsieves} quite closely. We assume that $\omega$ takes the values $w$ and $\tilde{w}$ and put $z = x^{\kappa}$.

We start by introducing some notation.
For each prime ideal of $\integers$ choose a generator $p\in\integers$ and let $\Primes_{\integers}$ be the set of all those $p$. For $Z\geq 0$ write
\[
	\Primes_{\integers}(Z) = \braces{p\in\Primes_{\integers} : \Norm(p) < Z}
\]
and
\[
	P_{\integers}(Z) = \prod_{p\in\Primes_{\integers}(Z)} p.
\]
We also need to introduce some $\integers$-version of the Möbius $\mu$ function: for a non-unit $d$, let $\mu(d)$ be defined as $(-1)^{r}$ if $d$ is the product of precisely $r$ non-associate prime elements and $\mu(d)=0$ otherwise. If $d$ is a unit, then put $\mu(d)=1$. Then, by inclusion--exclusion, we have
\begin{equation}\label{eq:InclusionExclusion}
	S(\omega,z)
	= \sum_{r\in\integers\setminus\braces{0}} \omega(r) \sum_{\substack{ m\mid P_{\integers}(z) \\ m\mid r }} \mu(m)
	%	= \sum_{m\mid P_{\integers}(z)} \mu(m) \sum_{\substack{ r\in\integers \\ m\mid r }} \omega(r)
	= \sum_{m\mid P_{\integers}(z)} \mu(m) \sum_{n\in\integers\setminus\braces{0}} \omega(mn).
\end{equation}
On writing
\begin{equation}\label{eq:DeltaDef}
	\Delta(m) = \sum_{n\in\integers\setminus\braces{0}} (w(mn) - \tilde{w}(mn)),
\end{equation}
applying \cref{eq:InclusionExclusion} for $\omega=w$ and $\omega=\tilde{w}$ yields
\begin{equation}\label{eq:DiffErrorTerm}
	\begin{aligned}
		S(w,z) - S(\tilde{w},z) &
		= \braces[\bigg]{ \sum_{\substack{ m\mid P_{\integers}(z) \\ \Norm(m)<M }} + \sum_{\substack{ m\mid P_{\integers}(z) \\ \Norm(m)\geq M }}} \mu(m) \Delta(m) \\ &
		= S_{\mathrm{I}} + S_{\mathrm{II}}, \quad \text{say}.
	\end{aligned}
\end{equation}
By~\cref{eq:Harman:Type:I} with $a_m = \mu(m) \ConditionalOne{ m \mid P_{\integers}(z) }$ we infer $\abs{S_{\mathrm{I}}} \leq Y$. Therefore, to prove the theorem, it remains to establish that
\begin{equation}\label{eq:S_II:Bound}
	\abs{S_{\mathrm{II}}} \ll Y (\log(xX))^3.
\end{equation}

The next step is to arrange $S_{\mathrm{II}}$ into subsums according to the \enquote{size} of the prime factors in $m$ (where $m$ is the summation variable from~\cref{eq:DiffErrorTerm}). To have some such notion of size, fix some total order $\prec$ on $\Primes_{\integers}(z)$ such that $\Norm(p_2) \leq \Norm(p_1)$ whenever $p_2 \prec p_1$. (Clearly many such orders exist, but the precise choice must not concern us.) Moreover, for $p\in\Primes_{\integers}(z)$, let
\[
	\Pi(p) = \prod_{q \prec p} q.
\]
Now take $g\colon\integers\to\CC$ to be any function with $g(m) = g(\tilde{m})$ whenever $m$ and $\tilde{m}$ are associates. Then, we may group the terms of the sum
\[
	S = \sum_{m\mid P_{\integers}(z)} \mu(m) g(m)
\]
according to the largest prime factor $p_1$ of $m$ (w.r.t.~$\prec$):
\begin{equation}\label{eq:BuchstabTypeIdentity:I}
	S = g(1) - \sum_{p_{1}\in\mathbb{P}_{\integers}(z)} \sum_{d\mid\Pi(p_{1})} \mu(d) g(p_{1}d).
\end{equation}
Evidently, the process giving~\cref{eq:BuchstabTypeIdentity:I} also works if $P_{\integers}(z)$ is replaced by $\Pi(p)$; for any $r\in\integers$ one has
\begin{equation}\label{eq:BuchstabTypeIdentity:II}
	\sum_{d\mid\Pi(p_{1})} \mu(d) g(rd)
	= g(r) - \sum_{p_{2}\prec p_{1}} \sum_{d\mid\Pi(p_{2})} \mu(d) g(rp_{2}d).
\end{equation}
Minding the inner most sum on the right hand side above, it is obvious that the above identity can be iterated if so desired. To describe for which sub-sums iteration is beneficial, we let
\begin{align*}
	\mathbb{P}_{\integers}(z) &
	= \braces{ p_{1} \in \mathbb{P}_{\integers}(z) : \Norm(p_{1}) > x^{\mu} } \cupdot \braces{ p_{1} \in \mathbb{P}_{\integers}(z) : \Norm(p_{1}) \leq x^{\mu} } \\ &
	= \mathscr{P}_{1} \cupdot \mathscr{Q}_{1},
	\quad \text{say},
\end{align*}
and, inductively for $s=2,3,\ldots$,
\begin{align*}
	\mathscr{Q}_{s}' &
	= \braces{
		\parentheses{p_{1},\ldots,p_{s-1},p_{s}} \in \parentheses{\mathbb{P}_{\integers}(z)}^{s} :
		p_{s} \prec p_{s-1},\, \parentheses{p_{1},\ldots,p_{s-1}} \in \mathscr{Q}_{s-1}
	} \\ &
	= \mathscr{P}_{s} \cupdot \mathscr{Q}_{s},
\end{align*}
where
\begin{align*}
	\mathscr{P}_{s} &
	= \braces{
		\parentheses{p_{1},\ldots,p_{s-1},p_{s}} \in \mathscr{Q}_{s}' : N\parentheses{p_{1}\cdots p_{s-1}p_{s}} > x^{\mu}
	},\\
	\mathscr{Q}_{s} &
	= \braces{
		\parentheses{p_{1},\ldots,p_{s-1},p_{s}} \in \mathscr{Q}_{s}' : N\parentheses{p_{1} \cdots p_{s-1}p_{s}} \leq x^{\mu}
	}.
\end{align*}
Assuming that $g$ vanishes on arguments $r$ with $\Norm(r)\leq x^{\mu}$, and on applying~\cref{eq:BuchstabTypeIdentity:I} and~\cref{eq:BuchstabTypeIdentity:II},
\begin{align*}
	S &
	= - \braces[\bigg]{ \sum_{p_{1}\in\mathscr{P}_{1}} + \sum_{p_{1}\in\mathscr{Q}_{1}} } \sum_{d\mid\Pi(p_{1})} \mu(d) g(p_{1}d) \\ &
	= -\sum_{p_{1}\in\mathscr{P}_{1}} \sum_{d\mid\Pi(p_{1})} \mu(d)g(p_{1}d)
	\begin{multlined}[t]
		+ \sum_{(p_{1},p_{2})\in\mathscr{P}_{2}} \sum_{d\mid\Pi(p_{2})} \mu(d) g(p_{1}p_{2}d) \\
		+ \sum_{(p_{1},p_{2})\in\mathscr{Q}_{2}} \sum_{d\mid\Pi(p_{2})} \mu(d) g(p_{1}p_{2}d). \hfill\phantom.
\end{multlined}
\end{align*}
On iterating this process---always applying~\cref{eq:BuchstabTypeIdentity:II} to the $\mathscr{Q}$-part---it transpires that
\[
	S = \!\begin{multlined}[t]
		\sum_{s\leq t} \parentheses{-1}^{s} \sum_{(p_{1},p_{2},\ldots,p_{s}) \in \mathscr{P}_{s}} \sum_{d\mid\Pi(p_{s})} \mu(d) g(p_{1}p_{2} \cdots p_{s}d) +{} \\
		+ \parentheses{-1}^{t} \sum_{(p_{1},p_{2},\ldots,p_{t}) \in \mathscr{Q}_{t}} \sum_{d\mid\Pi(p_{t})} \mu(d) g(p_{1}p_{2} \cdots p_{t}d)
	\end{multlined}
\]
for any $t\in\NN$. Since the product of $t$ prime elements has norm $\geq2^{t}$, we have
\[
	\mathscr{Q}_{t} = \emptyset
	\quad\text{for}\quad
	t > \frac{\mu}{\log2} \log x.
\]
Hence,
\[
	S = \sum_{s\leq t} \parentheses{-1}^{s} \sum_{(p_{1},p_{2},\ldots,p_{s}) \in \mathscr{P}_{s}} \sum_{d\mid\Pi(p_{s})} \mu(d) g(p_{1}p_{2}\cdots p_{s}d)
\]
for (say)
\begin{equation}\label{eq:ChoiceOf:t}
	t = \floor{ (\log x) / \log 2} + 1 \ll \log x.
\end{equation}

We apply this to $S_{\mathrm{II}}$ with $g(m) = \Delta(m) \ConditionalOne{\Norm(m)\geq M}$.
Note that, since $M>x^{\mu}$, we have $g(r)=0$ for all $r$ with $\Norm(r)\leq x^{\mu}$, as was assumed in the above arguments.
Thus,
\begin{equation}\label{eq:S_II:Decomposition}
	S_{\mathrm{II}} = \sum_{s\leq t} \parentheses{-1}^{s} S_{\mathrm{II}}(s),
\end{equation}
where
\[
	S_{\mathrm{II}}(s) = \sum_{\substack{(p_{1},\ldots,p_{s})\in\mathscr{P}_{s} \\ m\coloneqq p_{1}\cdots p_{s} }} \sum_{\substack{ d\mid\Pi(p_{s}) \\ \Norm(md)\geq M }} \mu(d) \Delta(md).
\]
Another application of~\cref{eq:BuchstabTypeIdentity:II} gives
\begin{align}
	\nonumber
	S_{\mathrm{II}}(s) &
	= \sum_{\substack{ (p_{1},\ldots,p_{s}) \in \mathscr{P}_{s} \\ m\coloneqq p_{1}\cdots p_{s} \\ \Norm(m)\geq M }} \Delta(m) - \sum_{\substack{(p_{1},\ldots,p_{s})\in\mathscr{P}_{s} \\ m\coloneqq p_{1}\cdots p_{s} }} \sum_{p\prec p_{s}} \sum_{\substack{d\mid\Pi(p) \\ \Norm(mpd)\geq M }} \mu(d) \Delta(mpd) \\ &
	\label{eq:S_II:s:Decomposition}
	= S_{\mathrm{II},1}(s) - S_{\mathrm{II},2}(s),
	\quad\text{say}.
\end{align}
Given $m=p_{1}\cdots p_{s-1}p_{s}$ with
\[
	\parentheses{p_{1},\ldots,p_{s-1},p_{s}} \in \mathscr{P}_{s}
	\quad\text{and}\quad
	\parentheses{p_{1},\ldots,p_{s-1}} \in \mathscr{Q}_{s-1},
\]
and noting that $\Norm(p_{s})\leq \Norm(p_{1})<z=x^{\kappa}$, we have
\[
	x^{\mu}
	< \Norm(m)
	= \Norm\parentheses{ p_{1}\cdots p_{s-1} } \Norm(p_{s})
	< x^{\mu}x^{\kappa}.
\]
Using this, we find that $S_{\mathrm{II},1}(s)$ can be expressed as
\[
	\mathop{\sum\sum}_{m,n\in\integers\setminus\braces{0}} a_m (w(mn) - \tilde{w}(mn)),
\]
where the coefficients
\[
	a_m = \ConditionalOne{\Norm(m)\geq M} \, \ConditionalOne{p_{1}\cdots p_{s} : \parentheses{p_{1},\ldots,p_{s}} \in \mathscr{P}_{s}}(m)
\]
are only supported on $m$ with $x^{\mu}<\Norm(m)<x^{\mu+\kappa}$. Hence, by~\cref{eq:Harman:Type:II},
\begin{equation}\label{eq:S_II_1:s:Bound}
	\abs{S_{\mathrm{II},1}(s)} \leq Y.
\end{equation}
Moving on to $S_{\mathrm{II},2}(s)$, we expand the definition~\cref{eq:DeltaDef} of $\Delta$, getting
\[
	S_{\mathrm{II},2}(s) = S_{\mathrm{II},2}(s,w) - S_{\mathrm{II},2}(s,\tilde{w}),
\]
where
\begin{align*}
	S_{\mathrm{II},2}(s,\omega) &
	= \sum_{\substack{
		(p_{1},\ldots,p_{s}) \in \mathscr{P}_{s} \\
		m \coloneqq p_{1}\cdots p_{s}
	}} \sum_{p\prec p_{s}} \sum_{\substack{
		d\mid\Pi(p) \\
		\Norm(mpd) \geq M
	}} \mu(d) \sum_{\ell\in\integers\setminus\braces{0}} \omega(m\ell pd) \\ &
	= \sum_{\substack{
		(p_{1},\ldots,p_{s}) \in \mathscr{P}_{s} \\
		m\coloneqq p_{1}\cdots p_{s}
	}} \adjustlimits \sum_{n\in\integers\setminus\braces{0}} \sum_{p\prec p_{s}} \mathop{\sum\sum}_{\substack{
		d\mid\Pi(p) \\
		\ell pd=n \\
		\mathclap{ \Norm(mpd) \geq M }
	}} \mu(d) \omega(mn).
\end{align*}
In order to apply~\cref{eq:Harman:Type:II}, we must disentangle the variables $m$ and $n$ in the above summation. To this end, split
\begin{equation}\label{eq:PrimeIneqSplit}
	\sum_{p\prec p_{s}}
	= \sum_{\substack{ p\prec p_{s} \\ \Norm(p)=\Norm(p_{s}) }} + \sum_{\substack{ p\prec p_{s} \\ \Norm(p)<\Norm(p_{s}) }}
\end{equation}
to obtain a decomposition
\begin{equation}\label{eq:S_II_2:Decomposition}
	S_{\mathrm{II},2}(s,\omega)
	= S_{\mathrm{II},2}^{=}(s,\omega) + S_{\mathrm{II},2}^{<}(s,\omega),
	\quad \text{say}.
\end{equation}
For $S_{\mathrm{II},2}^{<}(s,\omega)$ we have
\[
	S_{\mathrm{II},2}^{<}(s,\omega)
	= \sum_{\substack{
		(p_{1},\ldots,p_{s})\in\mathscr{P}_{s} \\
		m\coloneqq p_{1}\cdots p_{s}
	}} \adjustlimits \sum_{n\in\integers\setminus\braces{0}} \sum_{p\in\mathbb{P}_{\integers}(z)} \mathop{\sum\sum}_{\substack{d\mid\Pi(p) \\ \ell pd=n }} \mu(d) \chi(m,d,p,p_{s}) \omega(mn),
\]
where
\[
	\chi(m,d,p,p_{s}) = \ConditionalOne{\Norm(mpd)\geq M} \ConditionalOne{\Norm(p)<\Norm(p_{s})},
\]
and the sum $S_{\mathrm{II},2}^{=}(s,\omega)$ can be expressed similarly, but needs a little more care: by basic ramification theory, the first summation on the right hand side of~\cref{eq:PrimeIneqSplit} contains at most one term and we shall write $\mathscr{P}_{s}'$ for the set of $(p_{1},\ldots,p_{s})\in\mathscr{P}_{s}$ for which there is such a term, that is, some $p\prec p_{s}$ with $\Norm(p)=\Norm(p_{s})$.
Furthermore, let $\mathbb{P}_{\integers}(z)'$ denote the set of the $p$'s just mentioned, i.e.,
\[
	\mathbb{P}_{\integers}(z)'
	= \braces{
		p\in\mathbb{P}_{\integers}(z) :
		\exists p_{s} \text{ s.t. } p \prec p_{s},\,
		\Norm(p) = \Norm(p_{s})
	}.
\]
Thus,
\[
	S_{\mathrm{II},2}^{=}(s,\omega)
	= \sum_{\substack{(p_{1},\ldots,p_{s})\in\mathscr{P}_{s}' \\ m\coloneqq p_{1}\cdots p_{s} }} \adjustlimits\sum_{n\in\integers\setminus\braces{0}}\sum_{p\in\mathbb{P}_{\integers}(z)'} \mathop{\sum\sum}_{\substack{ d\mid\Pi(p) \\ \ell pd=n }} \mu(d) \tilde{\chi}(m,d,p,p_{s}) \omega(mn),
\]
where
\begin{align}
	\nonumber
	\tilde{\chi}(m,d,p,p_{s}) &
	= \ConditionalOne{\Norm(mpd)\geq M} \ConditionalOne{\Norm(p)=\Norm(p_{s})} \\ &
	\label{eq:ChiTildeDecomposition}
	= \ConditionalOne{\Norm(mpd)\geq M} \ConditionalOne{\Norm(p)\leq \Norm(p_{s})} - \chi(m,d,p,p_{s}).
\end{align}

To disentangle $m$-dependent quantities ($\Norm(m)$ and $\Norm(p_s)$) from $n$-\hspace{0pt}dependent quantities ($\Norm(pd)$ and $\Norm(p)$) in the above, we employ
\cref{lem:PerronVariant}.
We pick some real number $\varrho$ (depending only on $M$) with $\abs{\varrho}\leq\frac{1}{2}$ such that $\braces{M+\varrho}=\frac{1}{2}$ and for $m,p,d\in\integers$ the condition $\Norm(mpd)\geq M$ is equivalent to $\log N\parentheses{mpd}\geq\log\parentheses{M+\varrho}$. Then
\[
	\abs{ \log N\parentheses{mpd} - \log\parentheses{M+\varrho} }
	\geq \log\frac{x+1}{x+\frac{1}{2}}
	\geq \frac{1}{3x}.
\]
Therefore, \cref{lem:PerronVariant} shows that
\[
	\ConditionalOne{\Norm(mpd)\geq M}
	= 1 - \frac{1}{\pi} \int_{-T}^{T} \parentheses{N\parentheses{mpd}}^{it} \sin\parentheses{t\log\parentheses{M+\varrho}} \frac{\dd{t}}{t} + O(x/T)
\]
for every $T\geq 1$.
Similarly,
\begin{gather*}
	\ConditionalOne{\Norm(p)<\Norm(p_{s})}
	= \frac{1}{\pi} \int_{-T}^{T} e^{\frac{it}{2}} e^{it\Norm(p)} \sin\parentheses{t\Norm(p_{s})} \frac{\dd{t}}{t} + O(1/T),\\
	\ConditionalOne{\Norm(p)\leq \Norm(p_{s})}
	= \frac{1}{\pi} \int_{-T}^{T} e^{-\frac{it}{2}} e^{it\Norm(p)} \sin\parentheses{t\Norm(p_{s})} \frac{\dd{t}}{t} + O(1/T).
\end{gather*}
Thus,
\begin{equation}\label{eq:S_II_2<:Integrals}
	S_{\mathrm{II},2}^{<}(s,\omega)
	= \!\begin{multlined}[t]
		\frac{1}{\pi} \int_{-T}^{T}
			\mathop{\sum\sum}_{m,n\in\integers\setminus\braces{0}} a_m(t)b_n(t) \omega(mn) \frac{\dd{t}}{t} -{} \\
		\shoveleft[.5cm]{ - \frac{1}{\pi^{2}} \int_{-T}^{T} \int_{-T}^{T}
			\mathop{\sum\sum}_{m,n\in\integers\setminus\braces{0}} a_m(t,\tau) b_n(t,\tau) \omega(mn)
		\frac{\dd{\tau}}{\tau} \frac{\dd{t}}{t} +{} } \\
		\shoveleft[.5cm]{ + O\parentheses*{ \frac{x}{T} + \frac{1}{T} \int_{-T}^{T} \abs{\sin\parentheses{ \tau \log\parentheses{M+\varrho} }} \frac{\dd{\tau}}{\abs{\tau}} } \times{} } \\
		\shoveleft[1.5cm]{ \times O\parentheses[\bigg]{
			\sum_{\substack{(p_{1},\ldots,p_{s})\in\mathscr{P}_{s} \\ m\coloneqq p_{1}\cdots p_{s}}} \sum_{n\in\integers\setminus\braces{0}} \sum_{p\in\mathbb{P}_{\integers}(z)} \mathop{\sum\sum}_{\substack{d\mid\Pi(p) \\ \ell pd=n}} \omega(mn)
		}, \hfill }
	\end{multlined}
\end{equation}
with coefficients
\begin{equation}\label{eq:TypeII:Coefficients:A}
	\begin{gathered}
		a_{m}(t) = \begin{cases}
			\sin\parentheses{t\Norm(p_{s})} & \text{if } \exists (p_{1},\ldots,p_{s}) \in \mathscr{P}_{s} \text{ s.t. } m = p_{1}\cdots p_{s}, \\
			0 & \text{otherwise},
		\end{cases} \\
		b_{n}(t) = \sum_{p\in\mathbb{P}_{\integers}(z)} \mathop{\sum\sum}_{\substack{d\mid\Pi(p) \\ \ell pd=n \neq 0 }} e^{\frac{it}{2}} e^{it\Norm(p)} \mu(d),
	\end{gathered}
\end{equation}
as well as
\begin{equation}\label{eq:TypeII:Coefficients:B}
	\begin{gathered}
		a_{m}(t,\tau) = a_{m}(t) \parentheses{N\parentheses m}^{i\tau} \sin\parentheses{\tau\log\parentheses{M+\varrho}}, \\
		b_{n}(t,\tau) = \sum_{p\in\mathbb{P}_{\integers}(z)} \mathop{\sum\sum}_{\substack{d\mid\Pi(p) \\ \ell pd=n \neq 0 }} e^{\frac{it}{2}} e^{it\Norm(p)} \mu(d) \parentheses{N\parentheses{pd}}^{i\tau}.
	\end{gathered}
\end{equation}

We proceed by gathering some intermediate information before applying~\cref{eq:Harman:Type:II}: in the definition of the coefficients $b_{n}$, neither of the summations over $p$ and $d$ includes associates. Thus,
\[
	\abs{b_{n}(t)},
	\abs{b_{n}(t,\tau)}
	\leq d(n\integers).
\]
For the other coefficients we always have
\[
	\abs{a_{m}(t)},
	\abs{a_{m}(t,\tau)}
	\leq 1,
\]
yet if $t$ and $\tau$ are small, one can (and must) do better: indeed, 
%if $\abs{t}\leq x^{-1/2}$ and $\abs{\tau}\leq\parentheses{\log\parentheses{x+\tfrac{1}{2}}}^{-1}$, then
%\begin{equation}\label{eq:Small:t:IntegrandBound}
%	\abs{a_{m}(t)}
%	\leq \sqrt{x} \abs{t}, \quad
%	\abs{a_{m}(t,\tau)}
%	\leq \sqrt{x} \log\parentheses{x+\tfrac{1}{2}} \abs{t\tau}.
%\end{equation}
\begin{equation} \label{eq:Small:t:IntegrandBound}
	\abs{a_m(t)}\le \min\{1,\abs{t}\delta_1\}
	,\quad %\mbox{and} \quad
	\abs{a_m(t,\tau)}\le \min\{1,\abs{t}\delta_1,\abs{\tau}\delta_2,\abs{t\tau}\delta_1\delta_2\}, 
\end{equation}
where 
$$
\delta_1 \coloneqq x^{1/2} \quad \text{and} \quad \delta_2 \coloneqq \log\parentheses*{x+\frac{1}{2}}. 
$$
In view of this, we must deal with functions $f\colon\RR\times(0,1)\to\RR$ of the shape
\[
	f(t,\delta) = \begin{cases}
		\delta t & \text{if } \abs{t} \leq \delta^{-1}, \\
		1        & \text{otherwise}
	\end{cases}
\]
and their integrals
\begin{equation}\label{eq:PiecewiseIntegralBound}
	\int_{-T}^{T} f(t,\delta) \frac{\dd{t}}{\abs{t}}
	\ll \delta {\int_{0}^{\delta^{-1}} \dd{t}} + \abs*{
		\int_{\delta^{-1}}^{T} \frac{\dd{t}}{t}
	}
	\ll 1 + \abs{\log(T\delta)}.
\end{equation}
Lastly, we note that, by \cref{lem:AlgebraicFacts} and \cref{eq:Convergence},
%\begin{equation}\label{eq:SieveTrivialBound}
%	\sum_{\substack{(p_{1},\ldots,p_{s})\in\mathscr{P}_{s} \\ m\coloneqq p_{1}\cdots p_{s} }} \sum_{n\in\integers\setminus\braces{0}} \sum_{p\in\mathbb{P}_{\integers}(z)} \mathop{\sum\sum}_{\substack{d\mid\Pi(p) \\ \ell pd=n}} \omega(mn)
%	\ll \sum_{r\in\integers\setminus\braces{0}} d_{4}(r\integers) \omega(r)
%	\ll X.
%\end{equation}
\begin{equation}\label{eq:SieveTrivialBound}
	\begin{aligned}
		\MoveEqLeft[7]
		\sum_{\substack{(p_{1},\ldots,p_{s})\in\mathscr{P}_{s} \\ m\coloneqq p_{1}\cdots p_{s} }} \sum_{n\in\integers\setminus\braces{0}} \sum_{p\in\mathbb{P}_{\integers}(z)} \mathop{\sum\sum}_{\substack{d\mid\Pi(p) \\ \ell pd=n}} \omega(mn) \\ &
		\ll \sum_{r\in\integers\setminus\braces{0}} d_{4}(r\integers) \omega(r)
		\ll X.
	\end{aligned}
\end{equation}

Collecting what we have gathered so far, we may derive a bound for
\[
	\mathcal{E} = \abs{ S_{\mathrm{II},2}^{<}(s,w) - S_{\mathrm{II},2}^{<}(s,\tilde{w}) }
\]
as follows: after applying~\cref{eq:S_II_2<:Integrals} with $\omega=w$ and $\omega=\tilde{w}$, the $O(\ldots)$-terms are treated directly with~\cref{eq:SieveTrivialBound} and~\cref{eq:PiecewiseIntegralBound}, whereas for the rest one may apply~\cref{eq:Harman:Type:II}. Here it is important to use
\cref{eq:Small:t:IntegrandBound} for small $\abs{t}$ respectively $\abs{\tau}$ 
first---prior to applying~\cref{eq:Harman:Type:II}---and~\cref{eq:PiecewiseIntegralBound} then bounds the integrals.
Therefore, after some computations, we infer
\begin{equation}\label{eq:S_II_2<:Bound}
	\mathcal{E} \ll \!\begin{multlined}[t]
		Y \log(Tx) \parentheses{ 1 + \log\parentheses{T \log\parentheses{x+\tfrac{1}{2}}}} +{} \\
		\shoveleft[.5cm]{ + X T^{-1} \parentheses{x + \log\parentheses{T \log\parentheses{x+\tfrac{1}{2}}}}. \hfill }
	\end{multlined}
\end{equation}
Of course, the same arguments also apply to $S_{\mathrm{II},2}^{=}(s,\omega)$; in view of~\cref{eq:ChiTildeDecomposition} we have to apply them twice, but in both cases the coefficients corresponding to~\cref{eq:TypeII:Coefficients:A} and~\cref{eq:TypeII:Coefficients:B} obey the same bounds we used to derive~\cref{eq:S_II_2<:Bound}.
Consequently,~\cref{eq:S_II_2<:Bound} also holds with $S_{\mathrm{II},2}^{=}$ in place of $S_{\mathrm{II},2}^{<}$.
In total, recalling~\cref{eq:S_II:s:Decomposition},~\cref{eq:S_II_1:s:Bound} and~\cref{eq:S_II_2:Decomposition}, we have
\[
	\abs{S_{\mathrm{II}}(s)} \ll Y + \braces{\text{the bound from~\cref{eq:S_II_2<:Bound}}}
\]
and it transpires that choosing $T = xX$ suffices to yield a bound $\ll Y(\log(xX))^{2}$.
On plugging this into~\cref{eq:S_II:Decomposition} and recalling~\cref{eq:ChoiceOf:t}, we infer~\cref{eq:S_II:Bound}.
Hence, the theorem is proved.
\end{proof}

% -------------------------------------------------------
%\bibliographystyle{plainnat}
%\bibliography{bibliography}

\end{document}